\documentclass[draft,12pt]{amsart}
\setlength{\textwidth}{6.0in}
\setlength{\textheight}{8.6in}
\setlength{\oddsidemargin}{4mm}
\setlength{\evensidemargin}{4mm}
\setlength{\footskip}{12mm}
\usepackage{amsfonts}
\usepackage{amssymb}
\numberwithin{equation}{section}
\pagestyle{plain}
\theoremstyle{plain}
 \newtheorem{thm}{Theorem}[section]
 \newtheorem{lem}[thm]{Lemma}
 \newtheorem{cor}[thm]{Corollary}
 \newtheorem{prop}[thm]{Proposition}

\theoremstyle{definition}

\newcommand{\al}{\alpha}
\newcommand{\bt}{\beta}
\newcommand{\gm}{\gamma}

\newcommand{\dl}{\delta}

\newcommand{\ep}{\varepsilon}


\newcommand{\sg}{\sigma}

\newcommand{\ph}{\varphi}

\newcommand{\rh}{\rho}

\newcommand{\dar}{\downarrow}

\newcommand{\mcal}{\mathcal}
\newcommand{\mrm}{\mathrm}

\newcommand{\eqd}{\overset{\mathrm d}{=}}
\newcommand{\wh}{\widehat}


\newcommand{\R}{\mathbb{R}}
\newcommand{\N}{\mathbb{N}}
\newcommand{\Z}{\mathbb{Z}}
\newcommand{\Q}{\mathbb{Q}}

\newcommand{\law}{\mathcal L}

\newcommand{\sym}{\,\mathrm{sym}}

\begin{document}
\setlength{\baselineskip}{18pt}
\setlength{\parindent}{1.8pc}
\allowdisplaybreaks
\title{Properties of stationary distributions\\
of a sequence of generalized\\
Ornstein--Uhlenbeck processes}
\author{Alexander Lindner and Ken-iti Sato}
\begin{abstract}
The infinite (in both directions) sequence of the distributions
$\mu^{(k)}$ of the stochastic integrals $\int_0^{\infty-}
c^{-N_{t-}^{(k)}} dL_t^{(k)}$ for integers $k$ is investigated. Here
$c>1$ and $(N_t^{(k)},L_t^{(k)})$, $t\geq0$, is a bivariate compound
Poisson process with L\'evy measure concentrated on three points
$(1,0)$, $(0,1)$, $(1,c^{-k})$.  The amounts of the normalized
L\'evy measure at these points are denoted by $p$, $q$, $r$. For
$k=0$ the process $(N_t^{(0)},L_t^{(0)})$ is marginally Poisson and
$\mu^{(0)}$ has been studied by Lindner and Sato (Ann.\ Probab.\
{\bf37} (2009), 250--274). The distributions $\mu^{(k)}$ are the
stationary distributions of a sequence of generalized
Ornstein--Uhlenbeck processes structurally related in some way.
Continuity properties of $\mu^{(k)}$ are shown to be the same as
those of $\mu^{(0)}$.  The dependence on $k$ of infinite
divisibility of $\mu^{(k)}$ is clarified. The problem to find
necessary and sufficient conditions in terms of $c$, $p$, $q$, and
$r$ for $\mu^{(k)}$ to be infinitely divisible is somewhat involved,
but completely solved for every integer $k$. The conditions depend
on arithmetical properties of $c$. The symmetrizations of
$\mu^{(k)}$ are also studied. The distributions $\mu^{(k)}$ and
their symmetrizations are $c^{-1}$-decomposable, and it is shown
that, for each $k\neq 0$, $\mu^{(k)}$ and its symmetrization may be
infinitely divisible without the corresponding factor in the
$c^{-1}$-decomposability relation being infinitely divisible. This
phenomenon was first observed by Niedbalska-Rajba (Colloq.\ Math.\
{\bf 44} (1981), 347--358) in an artificial example. The notion of
quasi-infinite divisibility is introduced and utilized, and it is
shown that a quasi-infinitely divisible distribution on $[0,\infty)$
can have its quasi-L\'evy measure concentrated on $(-\infty,0)$.
\end{abstract}
\maketitle

\section{Introduction}

Let   $\{ V_t, t\geq 0\}$ be a generalized Ornstein--Uhlenbeck
process associated with a bivariate L\'evy process $\{ (\xi_t,
\eta_t), t\geq0\}$ with initial condition $S$. That is, $\{ V_t \}$
is a stochastic process defined by
\begin{equation} \label{1-1}
V_t = e^{-\xi_t} \left( S + \int_0^t e^{\xi_{s-}} \, d\eta_s
\right),
\end{equation}
where $\{ (\xi_t, \eta_t)\}$ and $S$ are assumed to be independent
(Carmona et al.\ \cite{CPY97,CPY01}). Define two other bivariate
L\'evy process $\{(\xi_t,L_t)\}, t\geq 0\}$ and $\{ (U_t,
L_t),t\geq0\}$ by
\begin{equation} \label{1-2}
\left( \begin{array}{c} U_t \\ L_t  \end{array} \right) = \left(
\begin{array}{l} \xi_t - \sum_{0< s \leq t} \left(e^{- (\xi_s -
\xi_{s-})}
-1 + (\xi_{s}-\xi_{s-})\right) - t \,2^{-1}\alpha_{\xi,\xi} \\
 \eta_t + \sum_{0 < s \leq t} (e^{-(\xi_s - \xi_{s-})}-1)
(\eta_s - \eta_{s-}) - t\,\al_{\xi,\eta}  \end{array}\right)
\end{equation}
where $\al_{\xi,\xi}$ and $\al_{\xi,\eta}$ are the $(1,1)$ and the
$(1,2)$ element  of the Gaussian covariance matrix of $\{ (\xi_t,
\eta_t)\}$, respectively.
Then $\{V_t, t\geq 0\}$ is the unique solution of the stochastic
differential equation
\begin{equation} \label{eq-SDE}
dV_t = -V_{t-} \, dU_t + dL_t, \quad t\geq 0, \quad V_0 = S,
\end{equation}
the filtration being such that $\{V_t\}$ is adapted and $\{U_t\}$
and $\{L_t\}$ are both semimartingales with respect to it (see
Maller et.~al~\cite{MMS}, p.~428, or Protter~\cite{Pr}, Exercise V.27).
Hence we shall also refer to a generalized Ornstein--Uhlenbeck
process associated with $\{(\xi_t,\eta_t)\}$ as the solution of the
SDE \eqref{eq-SDE} {\it driven by $\{(U_t,L_t)\}$}.
Let
\begin{equation}\label{1-4}
 \mu =\mathcal{L} \left(\int_0^{\infty-}
e^{- \xi_{s-} } \, dL_s\right),
\end{equation}
whenever the improper integral exists, where $\mcal L$ stands for
\lq\lq distribution of". If $\{\xi_t\}$ drifts to $+\infty$ as
$t\to\infty$ (or, alternatively, under a minor non-degeneracy
condition), a necessary and sufficient condition for $\{ V_t \}$ to
be a strictly stationary process under an appropriate choice of $S$
is the almost sure
convergence of the improper integral in \eqref{1-4}; in this case
$\mu$ is the unique stationary marginal distribution (Lindner and
Maller \cite{LM}).
 The condition for the convergence of the
improper integral in \eqref{1-4} in terms of the L\'evy--Khintchine
triplet of $\{( \xi_t, L_t)\}$ is given by Erickson and Maller
\cite{EM}. Properties of the distribution $\mu$ are largely unknown,
apart from some special cases. For example, it  is selfdecomposable
if $\eta_t=t$ and $\xi_t= (\log c) N_t$ for a Poisson process
$\{N_t\}$ and a constant $c>1$ (Bertoin et al.~\cite{BBY}), or if
$\{\xi_t\}$ is spectrally negative and drifts to $+\infty$  as
$t\to\infty$ (see Bertoin et al.~\cite{BLM} and
Kondo et al.\ \cite{KMS} for a multivariate
generalization). Bertoin
 et al.\ \cite{BLM} have shown that the distribution in \eqref{1-4}
is always continuous unless degenerated to a Dirac measure. In
Lindner and Sato \cite{LS}, the distribution $\mu$ in \eqref{1-4}
and its symmetrization was studied for the case when
$\{(\xi_t,L_t)\} = \{((\log c) N_t, L_t)\}$ for a constant $c>1$
and a  bivariate L\'evy process $\{ (N_t,L_t)\}$ such that both
$\{N_t\}$ and $\{L_t\}$ are Poisson process; the L\'evy measure of
$\{(N_t,L_t)\}$ is then concentrated on the three points $(1,0)$,
$(0,1)$ and $(1,1)$.

In this paper we extend the setup of our paper \cite{LS}, by
defining a sequence of bivariate L\'evy processes
$\{(N_t^{(k)},L_t^{(k)}), t\geq0\}$,
$k\in\Z=\{\ldots,-1,0,1,\ldots\}$, in the following way. The process
$\{(N_t^{(k)},L_t^{(k)})\}$ has the characteristic function
\begin{equation}\label{1-5}
E[e^{i(z_1N_t^{(k)}+z_2L_t^{(k)})}]=\exp\left[
t\int_{\R^2}(e^{i(z_1x_1+z_2x_2)}-1) \nu^{(k)}(dx)\right],\quad
(z_1,z_2)\in\R^2,
\end{equation}
where the L\'evy measure $\nu^{(k)}$ is concentrated on at most
three points $(1,0)$, $(0,1)$, $(1,c^{-k})$ with $c>1$ and
\[
u= \nu^{(k)}( \{ (1,0) \}), \quad v= \nu^{(k)} ( \{ (0,1) \}), \quad
w= \nu^{(k)} ( \{ (1,c^{-k}) \} ).
\]
We assume that $u+w>0$ and $v+w>0$, so that $\{N_t^{(k)}\}$ is a
Poisson process with parameter $u+w$ and $\{L_t^{(k)}\}$ is a
compound Poisson process with L\'evy measure  concentrated on at
most two points $1$, $c^{-k}$ with total mass $v+w$. In particular,
$\{L_t^{(0)}\}$ is a Poisson process with parameter $v+w$. We define
the normalized L\'evy measure, which has mass
\[
p=\frac{u}{u+v+w},\quad q=\frac{v}{u+v+w},\quad r=\frac{w}{u+v+w}
\]
at the three points. We have  $p,q,r\ge0$ and $p+q+r=1$. The
assumption that $u+w>0$ and $v+w>0$ is now written as $p+r>0$ and
$q+r>0$.  We are interested in continuity properties and conditions
for infinite divisibility of the distribution
\begin{equation}\label{1-6}
\mu^{(k)}=\mcal L \left(\int_0^{\infty-} c^{-N^{(k)}_{s-}}
dL_s^{(k)}\right), \quad k\in\Z.
\end{equation}
For $k=0$ the distribution $\mu^{(0)}$ is identical with the
distribution $\mu_{c,q,r}$ studied in our paper \cite{LS}. As will
be shown in Proposition~\ref{p2a} below,
 $\mu^{(k+1)}$ is the unique stationary
distribution of the generalized Ornstein-Uhlenbeck process
associated with $\{ ((\log c) N_t^{(k)}, L_t^{(k)})\}$ as defined in
\eqref{1-1}, while $\mu^{(k)}$
 appears naturally as the unique stationary
distribution of the SDE~\eqref{eq-SDE} driven by $\{((1-c^{-1})
N_t^{(k)}, L_t^{(k)})\}$. The fact that $\{(N_t^{(k)}, L_t^{(k)})\}$
is related to both $\mu^{(k+1)}$ and $\mu^{(k)}$ in a natural way
explains the initial interest in the distributions $\mu^{(k)}$ with
general $k\in \Z$. As discussed below, they have some surprising
properties which cannot be observed for $k=0$.


In contrast to the situation in our paper~\cite{LS}, where
$\mu^{(0)}$ was studied, continuity properties of $\mu^{(k)}$ are
easy to handle, in the sense that they are reduced to those of
$\mu^{(0)}$; but classification of $\mu^{(k)}$ into infinitely divisible and
non-infinitely divisible cases is more complicated than that
of $\mu^{(0)}$. We will give  a complete answer to this
problem.
The criterion for infinite divisibility of $\mu^{(k)}$ depends on
arithmetical properties of $c$.  It is more involved for $k<0$
than for $k>0$.  If $k<0$ and $c^j$ is an integer for some positive
integer $j$, we will have to introduce a new class of functions
$h_{\al,\gm}(x)$ with integer parameters $\al\geq2$ and $\gm\geq1$
to express the criterion.  In the case that $k<0$ and $c^j$ is
not an integer for any positive integer $j$, the hardest situation
is where $c^j=3/2$ for some integer $j$.  In this situation,
however, we will express the criterion by 149 explicit
inequalities between $p$, $q$, and $r$.
It will be also shown that, for $p$, $q$, and $r$ fixed,
the infinite divisibility
of $\mu^{(k)}$, $k\in \Z$, has the following monotonicity: if
$\mu^{(k)}$ is infinitely divisible for some $k=k_0$, then
$\mu^{(k)}$ is infinitely divisible for all $k\geq k_0$. Further,
if $p>0$ and $r>0$, then
$\mu^{(k)}$ is non-infinitely divisible for all $k$ sufficiently
close to $-\infty$. The case where $\mu^{(k)}$ is non-infinitely
divisible for all $k\in\Z$ is also characterized in terms of the
parameters.

The investigation of the law $\mu^{(k)}$ is related to the study of
$c^{-1}$-decomposable distributions. For $b\in(0,1)$ a distribution
$\sg$ on $\R$ is said to be $b$-decomposable if there is a
distribution $\rh$ such that
\[
\wh\sg(z)=\wh\rh(z)\,\wh\sg(bz),\quad z\in\R.
\]
 Here $\wh\sg(z)$ and $\wh\rh(z)$ denote the characteristic
functions of $\sg$ and $\rh$. The ``factor'' $\rho$ is not
necessarily uniquely determined by $\sg$ and $b$, but it is if
$\wh{\sg}(z) \neq 0$ for $z$ from a dense subset of $\R$. If $\rh$
is infinitely divisible, then so is $\sg$, but the converse is not
necessarily true as pointed out by Niedbalska-Rajba \cite{Ni} in a
somewhat artificial example. The study of $b$-decomposable
distributions is made by Lo\`eve \cite{L45},
Grincevi\v{c}jus~\cite{Gr}, Wolfe \cite{Wo}, Bunge \cite{Bu},
Watanabe \cite{Wa00}, and others. In particular, any
$b$-decomposable distribution which is not a Dirac measure is either
continuous-singular or absolutely continuous (\cite{Gr} or
\cite{Wo}).

We will show that, for $k\in\Z$, $\mu^{(k)}$ is $c^{-1}$-decomposable
and explicitly give the distribution $\rh^{(k)}$ satisfying
\begin{equation}\label{1-8}
\wh\mu^{(k)}(z)=\wh\rh^{(k)}(z)\,\wh\mu^{(k)}(c^{-1}z),
\end{equation}
where $\wh\mu^{(k)}(z)$ and $\wh\rh^{(k)}(z)$ are the characteristic
functions of $\mu^{(k)}$ and $\rh^{(k)}$. The distribution
$\rh^{(k)}$ is unique here as will follow from Proposition~\ref{p2c}
below. A criterion for infinite divisibility of $\rh^{(k)}$ for
$k\in\Z$ in terms of $c$, $p$, $q$, and $r$ will be given;
it is simpler than that of $\mu^{(k)}$.
In particular, it will be shown that for
every $k\neq 0$ there are parameters $c, p,q,r$ such that the factor
$\rh^{(k)}$ is not infinitely divisible while $\mu^{(k)}$ is
infinitely divisible. This is different from the situation $k=0$
treated in \cite{LS}, since such a phenomenon does not happen for
$\mu^{(0)}$. Allowing $k\neq 0$,  we obtain a lot of examples
satisfying this phenomenon, and unlike in
 Niedbalska-Rajba \cite{Ni}, our examples
are connected with simple stochastic processes.

We also consider the symmetrizations $\mu^{(k)\,\mrm{sym}}$ for
general $k\in \Z$. Then  $\mu^{(k)\,\mrm{sym}}$ is again
$c^{-1}$-decomposable and satisfies
\begin{equation}\label{1-9}
\wh\mu^{(k)\sym}(z)=\wh\rh^{(k)\sym}(z)\,\wh\mu^{(k)\sym}(c^{-1}z).
\end{equation}
Necessary and sufficient conditions for infinite divisibility of
$\mu^{(k)\,\mrm{sym}}$ and of $\rho^{(k)\,\mrm{sym}}$ are obtained. In
particular, it will be shown that if $k\neq 0$, then
$\mu^{(k)\,\mrm{sym}}$ can be infinitely divisible without
$\rh^{(k)\,\mrm{sym}}$ being infinitely divisible, a phenomenon
which does not occur for
$\mu^{(0)}$ treated in \cite{LS}.
The argument we use to characterize infinite
divisibility of $\mu^{(k)\,\mrm{sym}}$ for $k\in \Z$ is new also in
the situation $k=0$, and simplifies the proof given in \cite{LS} for
that situation considerably.

We introduce the following notion for distributions having
L\'evy--Khintchine-like representation.
A distribution $\sg$ on $\R$ is called {\it quasi-infinitely
divisible} if
\begin{equation}\label{d1a1}
\wh\sg(z)=\exp\left[ i \gm z - az^2+\int_{\R}
(e^{izx}-1-izx1_{[-1,1]}(x))\,\nu_{\sg} (dx)\right],
\end{equation}
where $\gamma, a \in\R$ and $\nu_{\sg}$ is a signed measure on $\R$
with total variation measure $|\nu_{\sg}|$ satisfying
$\nu_{\sg}(\{0\})=0$ and $\int_{\R}(x^2\land1)\,|\nu_{\sg}|(dx)
<\infty$. The signed measure $\nu_\sg$ will be called {\it
quasi-L\'evy measure} of $\sg$.
Note that $\gm$, $a$ and $\nu_{\sg}$ in \eqref{d1a1} are unique if
they exist. Infinitely divisible distributions on $\R$ are
quasi-infinitely divisible. A quasi-infinitely divisible
distribution $\sg$ on $\R$ is infinitely divisible if and only if
$a\geq 0$ and the negative part of $\nu_{\sg}$ in the Jordan
decomposition is zero. See E12.2 and E12.3 of \cite{Sa}. We shall
see in Corollary~\ref{cor-4.2} that some of the distributions
$\mu^{(k)}$, supported on
$\R_+=[0,\infty)$,  are quasi-infinitely divisible with non-trivial
quasi-L\'evy measure being concentrated
on $(-\infty,0)$. Such a phenomenon does not occur in
infinitely divisible case.

In this paper $ID$, $ID^0$, and $ID^{00}$ respectively denote the
class of infinitely divisible distributions on $\R$, the class of
quasi-infinitely divisible, non-infinitely divisible distributions
on $\R$, and the class of distributions on $\R$ which are not
quasi-infinitely divisible. When characterizing infinite
divisibility of $\rho^{(k)}$, $\mu^{(k)}$, $\rho^{(k)\,\rm sym}$ and
$\mu^{(k)\,\rm sym}$ we shall more precisely determine to which of
the classes $ID$, $ID^0$ and $ID^{00}$ the corresponding
distributions belong.

Without the name of quasi-infinitely
divisible distributions, the property that $\sg$ satisfies
\eqref{d1a1} with $\nu_{\sg}$ having non-trivial negative part is known
to be useful in showing that $\sg$ is not  infinitely divisible,
in books and papers such as Gnedenko
and Kolmogorov \cite{GK} (p.\,81), Linnik and Ostrovskii \cite{LO}
(Chap.~6, \S\,7) and Niedbalska-Rajba \cite{Ni}.
We single out the class $ID^0$ for two reasons.  The first is that
$\mu$ in $ID^0$ has a manageable characteristic function, which is
the quotient of two  infinitely divisible  characteristic functions.
The second is that the notion is useful in studying the
symmetrization $\mu^{\rm sym}$ of $\mu$.  Already in
Gnedenko and Kolmogorov \cite{GK} p.\,82 an example of $\mu\not\in ID$
satisfying $\mu^{\rm sym}\in ID$ is given in this way.  It is noticed in
\cite{LS} that $\mu^{(0)\,\mrm{sym}}$ (or $\rh^{(0)\,\mrm{sym}}$) can be
in $ID$ without $\mu^{(0)}$ (or $\rh^{(0)}$) being in $ID$.
We will show the same phenomenon occurs also for
$\mu^{(k)}$ and $\rh^{(k)}$.

The paper is organized as follows: in Section 2 we describe the
$c^{-1}$-decomposa\-bi\-li\-ty of $\mu^{(k)}$, $k\in\Z$, and its
consequences.  Section 3 deals with continuity properties of
$\mu^{(k)}$, $k\in\Z$.  In Sections 4, 5, and 6 results on infinite
divisibility and quasi-infinite divisibility of $\rh^{(k)}$ and
$\mu^{(k)}$ are given for general $k$, positive $k$, and negative
$k$, respectively. The last Section 7 discusses the symmetrizations.

We shall assume throughout the paper that $c>1$, $p+r>0$ and $q+r>0$
without further mentioning. The following notation will be used.
$\N$ (resp.\ $\N_0$) is the set of positive (resp.\ nonnegative)
integers. $\N_{\mrm{even}}$ (resp.\ $\N_{\mrm{odd}}$) is the set of
even (resp.\ odd) positive integers. The Lebesgue measure of $B$ is
denoted by $\mrm{Leb}\,(B)$. The dimension of a measure $\sg$,
written $\dim\,(\sg)$, is the infimum of $\dim\,B$, the Hausdorff
dimension of $B$,  over all Borel sets $B$ having full $\sg$
measure. $H(\rh)$ is the entropy of a discrete measure $\rh$. $\mcal
B(\R)$ is the class of Borel sets in $\R$. The Dirac measure at a
point $x$ is denoted by $\dl_x$.

%

\section{The $c^{-1}$-decomposability and its consequences}

We start with the following proposition which clarifies the
relations between $\{(N^{(k)}_t, L^{(k)}_t)\}$, $\{(N^{(k-1)}_t,
L_t^{(k-1)})\}$ and $\mu^{(k)}$.

\begin{prop}\label{p2a}
Let $c,p,q,r$ be fixed and let $k\in\Z$.  Then
\begin{equation*} \label{eq-relation-sequence} \{(N_t^{(k)},L_t^{(k)})\}
\eqd \{ (N_t^{(k-1)}, L_t^{(k-1)} + \sum_{0<s\leq t}
(e^{-\log (c) (N_s^{(k-1)} - N_{s-}^{(k-1)})}-1) (L_s^{(k-1)} -
L_{s-}^{(k-1)}))\},
\end{equation*}
so that $\{((1-c^{-1})N_t^{(k)}, L_t^{(k)})\}$ is equal in
distribution to the right-hand-side of \eqref{1-2} when applied with
$\{(\xi_t,\eta_t)\} = \{(\log(c) N_t^{(k-1)}, L_t^{(k-1)})\}$. The
integral $\int_0^{\infty-} c^{-N_{s-}^{(k)}} \, dL_s^{(k)}$ exists
as an almost sure limit, and its distribution $\mu^{(k)}$ is the
unique stationary distribution of the generalized
Ornstein--Uhlenbeck process associated with $\{((\log c)
N_t^{(k-1)},\linebreak L_t^{(k-1)})\}$ as defined in \eqref{1-1},
equivalently $\mu^{(k)}$ is the unique stationary distribution of
the SDE \eqref{eq-SDE} driven by $\{ ((1-c^{-1})N_t^{(k)},
L_t^{(k)})\}$.
\end{prop}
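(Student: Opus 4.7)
The plan is to verify the displayed identity in distribution by matching Lévy measures of two bivariate compound Poisson processes, then translate through formula \eqref{1-2} to obtain the SDE driver, and finally invoke Lindner and Maller~\cite{LM} for existence and uniqueness of the stationary distribution.

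First I would note that both sides of the first displayed identity are bivariate compound Poisson with no Gaussian part and no drift, so equality in distribution reduces to equality of Lévy measures. The right-hand process leaves a jump $(\Delta N^{(k-1)}_s,\Delta L^{(k-1)}_s)$ untouched in the first coordinate and adds $(c^{-\Delta N^{(k-1)}_s}-1)\Delta L^{(k-1)}_s$ to the second. Evaluating at the three atoms of $\nu^{(k-1)}$: at $(1,0)$ the correction is $0$ and the image is $(1,0)$; at $(0,1)$ the correction is $(c^0-1)\cdot 1=0$ and the image is $(0,1)$; at $(1,c^{-(k-1)})$ the correction $(c^{-1}-1)c^{-(k-1)}=c^{-k}-c^{-(k-1)}$ sends the image to $(1,c^{-k})$. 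The atomic masses $u,v,w$ are preserved, so the transformed Lévy measure coincides with $\nu^{(k)}$.

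For the statement about \eqref{1-2}, I would substitute $(\xi_t,\eta_t)=((\log c)N^{(k-1)}_t,L^{(k-1)}_t)$; the Gaussian covariances $\al_{\xi,\xi}$ and $\al_{\xi,\eta}$ vanish. Since $\Delta N^{(k-1)}_s\in\{0,1\}$, the sum in the first coordinate of \eqref{1-2} collapses to $(c^{-1}-1+\log c)N^{(k-1)}_t$, so $U_t=(\log c)N^{(k-1)}_t-(c^{-1}-1+\log c)N^{(k-1)}_t=(1-c^{-1})N^{(k-1)}_t$. The second coordinate of \eqref{1-2} reproduces verbatim the modified $L$-process from the first equality. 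Combining with the Lévy measure matching above yields $\{(U_t,L_t)\}\eqd\{((1-c^{-1})N^{(k)}_t,L^{(k)}_t)\}$.

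For existence, $\{N^{(k)}_t\}$ is Poisson with parameter $u+w>0$, so $(\log c)N^{(k)}_t\to+\infty$ almost surely; the Erickson--Maller criterion~\cite{EM} is then trivially satisfied, and more concretely the improper integral equals $\sum_n c^{-N^{(k)}_{T_n-}}\Delta L^{(k)}_{T_n}$ over the jump times of $\{L^{(k)}_t\}$, which is absolutely convergent a.s.\ by comparison with a geometric series. Lindner and Maller~\cite{LM} then give that the generalized Ornstein--Uhlenbeck process associated with $\{((\log c)N^{(k-1)}_t,L^{(k-1)}_t)\}$ admits a unique stationary distribution equal to $\mcal L\bigl(\int_0^{\infty-}c^{-N^{(k-1)}_{s-}}\,dL_s\bigr)$ with $L$ given by \eqref{1-2}; the first equality of the proposition identifies this law with $\mu^{(k)}$. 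The ``equivalently'' clause is automatic since by construction the associated generalized OU process solves \eqref{eq-SDE} driven by $(U,L)$, and $(U,L)\eqd((1-c^{-1})N^{(k)},L^{(k)})$. The main technical point is the verification of existence; the remainder is direct bookkeeping against the Lévy structure and \eqref{1-2}.
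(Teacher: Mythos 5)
Your proposal is correct and follows essentially the same route as the paper's proof: the first identity is obtained by comparing the jump structure (Lévy measure) of the transformed compound Poisson process with $\nu^{(k)}$, the identification with \eqref{1-2} is direct bookkeeping with vanishing Gaussian terms, existence of the improper integral comes from the a.s.\ linear growth of $N^{(k)}$, and stationarity/uniqueness is delegated to Lindner and Maller as in the introduction. Your version merely spells out the atom-by-atom verification and the computation of $U_t=(1-c^{-1})N_t^{(k-1)}$ that the paper leaves implicit.
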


\begin{proof}
The process $\{(N_t^{(k-1)},L_t^{(k-1)})\}$ is a bivariate compound
Poisson process. Its jump size is determined by the normalized
L\'evy measure and for $k\in \Z$ we have
\begin{align*}
\{(N_t^{(k)},L_t^{(k)})\} & \eqd \{ ( N_t^{(k-1)} ,
\sum_{0<s\le t}
c^{-(N_s^{(k-1)}-N_{s-}^{(k-1)})} (L_s^{(k-1)}-L_{s-}^{(k-1)}))\} \\
& = \{ ( N_t^{(k-1)} , L_t^{(k-1)} + \sum_{0<s\le t}
(c^{-(N_s^{(k-1)}-N_{s-}^{(k-1)})}-1)
(L_s^{(k-1)}-L_{s-}^{(k-1)}))\},
\end{align*}
giving the first relation. The existence of the improper stochastic
integral follows from the law of large numbers. The remaining
assertions are then clear from the discussion in the introduction,
where \eqref{1-4} was identified as the unique stationary
distribution of the corresponding stochastic process.
\end{proof}

Let $T$ be the first jump time for $\{N_t^{(k)}\}$ and let
\begin{equation}\label{2-1}
\rh^{(k)}=\mcal L(L_T^{(k)}).
\end{equation}

\begin{prop}\label{p2b}
For $k\in\Z$ the distribution $\mu^{(k)}$ is $c^{-1}$-decomposable and satisfies
\eqref{1-8}.
The characteristic function of $\mu^{(k)}$ has expression
\begin{equation}\label{p2b2}
\wh\mu^{(k)}(z)=\prod_{n=0}^{\infty}\wh\rh^{(k)}(c^{-n}z),\quad z\in\R.
\end{equation}
\end{prop}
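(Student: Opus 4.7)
The plan is to condition on the first jump time $T$ of the Poisson process $\{N_t^{(k)}\}$ and exploit the strong Markov property of the bivariate Lévy process $\{(N_t^{(k)},L_t^{(k)})\}$ at $T$. Since $T$ is almost surely finite (the jump rate $u+w>0$) and strictly positive, splitting the improper integral at $T$ gives
\begin{equation*}
\int_0^{\infty-} c^{-N^{(k)}_{s-}}\,dL_s^{(k)} = \int_0^{T} c^{-N^{(k)}_{s-}}\,dL_s^{(k)} + \int_T^{\infty-} c^{-N^{(k)}_{s-}}\,dL_s^{(k)},
\end{equation*}
both sides understood as almost sure limits (the existence was already noted in Proposition~\ref{p2a}).

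First I would evaluate the two pieces. For $s\in[0,T]$ one has $N_{s-}^{(k)}=0$, so $c^{-N_{s-}^{(k)}}=1$ and the first integral collapses to $L_T^{(k)}$, which has distribution $\rh^{(k)}$ by the very definition \eqref{2-1}. For $s>T$, set $u=s-T$ and define the shifted process $(\wt N_u,\wt L_u):=(N_{T+u}^{(k)}-N_T^{(k)},L_{T+u}^{(k)}-L_T^{(k)})$. One checks that $N_{s-}^{(k)}=N_T^{(k)}+\wt N_{u-}=1+\wt N_{u-}$ and $dL_s^{(k)}=d\wt L_u$, whence
\begin{equation*}
\int_T^{\infty-} c^{-N_{s-}^{(k)}}\,dL_s^{(k)} = c^{-1}\int_0^{\infty-} c^{-\wt N_{u-}}\,d\wt L_u .
\end{equation*}
By the strong Markov property applied at the $\{\mcal F_t\}$-stopping time $T$, the shifted process $(\wt N,\wt L)$ is independent of $\mcal F_T$ and has the same law as $(N^{(k)},L^{(k)})$; in particular the right-hand integral is independent of $L_T^{(k)}$ and has distribution $\mu^{(k)}\circ (c\,\cdot)^{-1}$. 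Taking characteristic functions therefore yields
\begin{equation*}
\wh\mu^{(k)}(z)=\wh\rh^{(k)}(z)\,\wh\mu^{(k)}(c^{-1}z),\quad z\in\R,
\end{equation*}
which is \eqref{1-8} and shows that $\mu^{(k)}$ is $c^{-1}$-decomposable with factor $\rh^{(k)}$.

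Finally, for the product formula \eqref{p2b2}, I would iterate the relation $n$ times to obtain
\begin{equation*}
\wh\mu^{(k)}(z)=\left(\prod_{j=0}^{n-1}\wh\rh^{(k)}(c^{-j}z)\right)\wh\mu^{(k)}(c^{-n}z),
\end{equation*}
and then let $n\to\infty$. Since $c>1$, $c^{-n}z\to 0$, and by continuity of $\wh\mu^{(k)}$ at the origin together with $\wh\mu^{(k)}(0)=1$, the last factor tends to $1$, so the partial products converge to $\wh\mu^{(k)}(z)$, giving \eqref{p2b2}.

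The main obstacle I anticipate is the careful bookkeeping in the second step: expressing $N_{s-}^{(k)}$ in terms of the shifted process for $s>T$ (ensuring the ``left limit'' convention is applied correctly at the jump time $T$), and justifying the change of variables in the stochastic integral so that the strong Markov property cleanly yields independence of the two summands and the identification of the law of the tail integral. Once this decomposition is set up, the characteristic-function identity and the iteration are routine.
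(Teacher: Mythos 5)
Your proposal is correct and follows essentially the same route as the paper: splitting the integral at the first jump time $T$, noting the piece over $[0,T]$ equals $L_T^{(k)}$ with law $\rh^{(k)}$, invoking the strong Markov property to identify the tail as $c^{-1}$ times an independent copy of the original integral (hence \eqref{1-8}), and then iterating the relation and letting the remainder factor tend to $1$ by continuity of $\wh\mu^{(k)}$ at the origin to get \eqref{p2b2}.
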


\begin{proof}
By the strong Markov property for L\'evy processes we have
\begin{align*}
 \int_0^{\infty-} c^{-N^{(k)}_{s-}} dL_s^{(k)} &= L_T^{(k)} + c^{-1}
\int_{T+}^{\infty-} c^{-(N^{(k)}_{s-} - N_T^{(k)})} \,
d(L_{\cdot}^{(k)} -
L_T^{(k)})_s\\
&\eqd 
L_T^{(k)}+c^{-1}\int_0^{\infty-}
c^{-N_{s-}^{(k)\prime}} dL_s^{(k)\prime},
\end{align*}
where $\{(N_t^{(k)\prime},L_t^{(k)\prime})\}$ is an independent copy
of $\{(N_t^{(k)},L_t^{(k)})\}$. This shows \eqref{1-8} and hence
$\mu^{(k)}$ is $c^{-1}$-decomposable. Since \eqref{1-8} implies
\[
\wh\mu^{(k)}(z)=\wh\mu^{(k)}(c^{-l}z)
\prod_{n=0}^{l-1}\wh\rh^{(k)}(c^{-n}z),\qquad z\in \R,\; l\in \N,
\]
we obtain \eqref{p2b2}.
\end{proof}

\begin{prop}\label{p2c}
For $k\in\Z$ the distributions  $\rh^{(k)}$ and $\mu^{(k)}$ satisfy the following.
\begin{gather}
\rh^{(k)}=\sum_{m=0}^{\infty} q^m p\,\dl_m+ \sum_{m=0}^{\infty} q^m
r\,\dl_{m+c^{-k}} ,\label{p2c1}\\
\wh\rh^{(k)}(z)=\frac{p+re^{ic^{-k} z}}{1-qe^{iz}} ,\label{p2c2}\\
\wh\mu^{(k)}(z)=\prod_{n=0}^{\infty} \frac{p+re^{ic^{-k-n}z}}{1-qe^{ic^{-n}z}} ,
\label{p2c3}\\
\wh\mu^{(k)}(z)=\wh\mu^{(k+1)}(z)\left(\frac{p}{p+r}+\frac{r}{p+r}e^{ic^{-k}z}
\right),
\label{p2c4}\\
\mu^{(k)}(B)=\frac{p}{p+r}\mu^{(k+1)}(B)+\frac{r}{p+r}\mu^{(k+1)}(B-c^{-k}),\quad
B\in\mcal B(\R),\label{p2c5}\\
\wh\mu^{(k+1)}(z)=\wh\mu^{(k)}(c^{-1}z)\frac{1-q}{1-qe^{iz}}.\label{p2c6}
\end{gather}
Further, the distribution $\rho^{(k)}$ is uniquely determined by
$\mu^{(k)}$ and \eqref{1-8}.
\end{prop}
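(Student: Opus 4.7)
The plan is to derive \eqref{p2c1} by a direct probabilistic argument, to extract \eqref{p2c2}--\eqref{p2c3} by Fourier computation together with Proposition~\ref{p2b}, and to obtain \eqref{p2c4}--\eqref{p2c6} together with the uniqueness assertion by telescoping the partial products of \eqref{p2c3}.

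Since $\{(N_t^{(k)},L_t^{(k)})\}$ is a bivariate compound Poisson process whose successive jumps take the three values $(1,0)$, $(0,1)$, $(1,c^{-k})$ independently with respective probabilities $p,q,r$, I would observe that $T$ is the first time that a jump of type $(1,0)$ or $(1,c^{-k})$ occurs. The number $M$ of intervening jumps of type $(0,1)$ is then geometric with $P(M=m)=q^m(p+r)$, and conditional on $M=m$ the jump at time $T$ contributes $0$ with probability $p/(p+r)$ or $c^{-k}$ with probability $r/(p+r)$. Combining these observations gives \eqref{p2c1}. Summing the two resulting geometric series in $z$ produces \eqref{p2c2}, and substituting into \eqref{p2b2} yields \eqref{p2c3}.

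For \eqref{p2c4} I would compare the $(N+1)$-fold partial products of \eqref{p2c3} at levels $k$ and $k+1$. Their denominator products agree term by term, and their numerator ratio telescopes to give
\[
\prod_{n=0}^{N}\frac{p+re^{ic^{-k-n}z}}{1-qe^{ic^{-n}z}}\;=\;\Biggl(\prod_{n=0}^{N}\frac{p+re^{ic^{-k-1-n}z}}{1-qe^{ic^{-n}z}}\Biggr)\cdot\frac{p+re^{ic^{-k}z}}{p+re^{ic^{-k-N-1}z}}.
\]
Since $c>1$, the rightmost denominator tends to $p+r>0$ as $N\to\infty$, and letting $N\to\infty$ yields \eqref{p2c4}; formula \eqref{p2c5} is then immediate upon recognising $\frac{p}{p+r}+\frac{r}{p+r}e^{ic^{-k}z}$ as the characteristic function of $\frac{p}{p+r}\dl_0+\frac{r}{p+r}\dl_{c^{-k}}$ and inverting the Fourier relation. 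The argument for \eqref{p2c6} is parallel but dual: the numerator factors of $\wh\mu^{(k+1)}(z)$ and $\wh\mu^{(k)}(c^{-1}z)$ match term by term, while their denominator partial products satisfy a telescoping identity whose limit is $(1-qe^{ic^{-N-1}z})/(1-qe^{iz})\to(1-q)/(1-qe^{iz})$.

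Uniqueness of $\rh^{(k)}$ is then settled as follows. From \eqref{p2c3} the zero set of $\wh\mu^{(k)}$ is contained in $\bigcup_{n\ge0}\{z\in\R:p+re^{ic^{-k-n}z}=0\}$, a countable union of discrete sets (the denominator factors $1-qe^{ic^{-n}z}$ never vanish, since $q<1$), so $\{z:\wh\mu^{(k)}(z)\neq 0\}$ is dense in $\R$; any two distributions $\rh_1,\rh_2$ satisfying \eqref{1-8} in place of $\rh^{(k)}$ must therefore have characteristic functions that agree on this dense set and hence everywhere by continuity, so $\rh_1=\rh_2$. The only step where real care is required is carrying out the telescoping in \eqref{p2c4} and \eqref{p2c6} at the level of partial products, thereby sidestepping any potential division by a vanishing limit; all other steps are routine.
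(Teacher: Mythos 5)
Your proof of \eqref{p2c1}--\eqref{p2c6} follows essentially the same route as the paper: the first-passage/geometric decomposition of $L_T^{(k)}$ for \eqref{p2c1}, summing the series for \eqref{p2c2}, substitution into \eqref{p2b2} for \eqref{p2c3}, and the two telescoping identities for the partial products (your displayed identity is correct, and the limits $p+re^{ic^{-k-N-1}z}\to p+r$ and $(1-qe^{ic^{-N-1}z})/(1-qe^{iz})\to(1-q)/(1-qe^{iz})$ are exactly the paper's computation) giving \eqref{p2c4}--\eqref{p2c6}, with \eqref{p2c5} read off as a two-point mixture.

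The one place where your write-up has a genuine gap is the uniqueness step. You assert that, ``from \eqref{p2c3}'', the zero set of $\wh\mu^{(k)}$ is contained in $\bigcup_{n\ge0}\{z\in\R: p+re^{ic^{-k-n}z}=0\}$. But an infinite product of nonvanishing factors can perfectly well converge to zero, so this containment is precisely what must be proved, not a formal consequence of the product formula. The paper closes this by the summability estimate
\begin{equation*}
\sum_{n=0}^{\infty}\left|\frac{p+re^{ic^{-k-n}z}}{1-qe^{ic^{-n}z}}-1\right|
\;\le\;\sum_{n=0}^{\infty}\frac{r\,|e^{ic^{-k-n}z}-1|+q\,|e^{ic^{-n}z}-1|}{1-q}\;<\;\infty ,
\end{equation*}
which gives absolute convergence of the product, so that it can vanish only where some numerator factor $p+re^{ic^{-k-n}z}$ vanishes. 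Alternatively, you can avoid infinite products altogether: $\wh\mu^{(k)}$ is continuous with $\wh\mu^{(k)}(0)=1$, hence nonzero in a neighbourhood of $0$, and iterating \eqref{1-8} finitely often gives $\wh\mu^{(k)}(z)=\wh\mu^{(k)}(c^{-l}z)\prod_{n=0}^{l-1}\wh\rh^{(k)}(c^{-n}z)$, so every zero of $\wh\mu^{(k)}$ is a zero of some $\wh\rh^{(k)}(c^{-n}\cdot)$, which by \eqref{p2c2} is a discrete (possibly empty) set. With either patch, your density-plus-continuity argument for the uniqueness of $\rh^{(k)}$ is fine.
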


\begin{proof}
Let $S_1, S_2,\ldots$ be the successive jump sizes of the compound
Poisson process $\{(N_t^{(k)},L_t^{(k)})\}$. Then
\begin{align*}
\rh^{(k)}&=P[S_1=(1,0)]\,\dl_0+P[S_1=(1,c^{-k})]\,\dl_{c^{-k}}\\
&\quad+\sum_{m=1}^{\infty} P[S_1=(0,1),\ldots,S_m=(0,1), S_{m+1}=(1,0)]\,\dl_m\\
&\quad+\sum_{m=1}^{\infty} P[S_1=(0,1),\ldots,S_m=(0,1),
S_{m+1}=(1,c^{-k}) ]\,\dl_{m+c^{-k}},
\end{align*}
which is equal to the right-hand side of \eqref{p2c1}. Note that $q=1-(p+r)<1$.
It follows from \eqref{p2c1} that
\[
\wh\rh^{(k)}(z)=\sum_{m=0}^{\infty} q^m pe^{imz}+ \sum_{m=0}^{\infty} q^m
re^{i(m+c^{-k})z} ,
\]
which is written to \eqref{p2c2}. This, combined with \eqref{p2b2}, gives \eqref{p2c3}.
It follows from \eqref{p2c3} that
\begin{align*}
\wh\mu^{(k)}(z)&=\lim_{l\to\infty}\prod_{n=0}^l
\frac{p+re^{ic^{-k-n}z}}{1-qe^{ic^{-n}z}}\\
&=\lim_{l\to\infty}\frac{p+re^{ic^{-k}z}}{p+re^{ic^{-k-1-l}z}}
\prod_{n=0}^l \frac{p+re^{ic^{-k-1-n}z}}{1-qe^{ic^{-n}z}}\\
&=\frac{p+re^{ic^{-k}z}}{p+r}
\wh\mu^{(k+1)}(z).
\end{align*}
This is \eqref{p2c4}.  It means that $\mu^{(k)}$ is a mixture of $\mu^{(k+1)}$
with the translation of $\mu^{(k+1)}$ by $c^{-k}$, as in \eqref{p2c5}.
Similarly,
\begin{align*}
\wh\mu^{(k+1)}(z)&=\lim_{l\to\infty}\prod_{n=0}^l
\frac{p+re^{ic^{-k-1-n}z}}{1-qe^{ic^{-n}z}}\\
&=\lim_{l\to\infty}\frac{1-qe^{ic^{-l-1}z}}{1-qe^{iz}}
\prod_{n=0}^l \frac{p+re^{ic^{-k-1-n}z}}{1-qe^{ic^{-n-1}z}}\\
&=\frac{1-q}{1-qe^{iz}} \wh\mu^{(k)}(c^{-1}z),
\end{align*}
which is \eqref{p2c6}. Finally, since
$$\sum_{n=0}^\infty \left| \frac{p+r e^{ic^{-k-n}z}}{1-q
e^{ic^{-n}z}}-1\right| \leq \sum_{n=0}^\infty \frac{r
|e^{ic^{-k-n}z}-1| + q |e^{ic^{-n}z}-1|}{1-q} < \infty,$$ the
infinite product in \eqref{p2c3} cannot be zero unless $p+r
e^{ic^{-k-n}z}=0$ for some $n\in \N_0$. It follows that
$\wh\mu^{(k)}(z) \neq 0$ for $z$ from a dense subset of $\R$, so
that $\rho^{(k)}$ is uniquely determined by $\mu^{(k)}$ and
\eqref{1-8}.
\end{proof}

\section{Continuity properties for all $k$}

Continuity properties for $\mu^{(k)}$ do not depend on $k$, as the
following theorem shows.
As a consequence of Proposition \ref{p2c}, $\mu^{(k)}$ is a Dirac measure
if and only if $r=1$.  If $r<1$, then $\mu^{(k)}$ is
either continuous-singular or absolutely continuous, since it is
$c^{-1}$-decomposable.

\begin{thm}\label{t3a}
Let $c,p,q,r$ be fixed and let $k\in\Z$. Then:

{\rm(i)}  $\mu^{(k)}$ is absolutely continuous if and only if $\mu^{(0)}$
is absolutely continuous.

{\rm(ii)}  $\mu^{(k)}$ is continuous-singular if and only if $\mu^{(0)}$
is continuous-singular.

{\rm(iii)}\quad $\dim\,(\mu^{(k)})=\dim\,(\mu^{(0)})$.
\end{thm}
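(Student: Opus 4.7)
The plan is to reduce all three parts to a one-step comparison of $\mu^{(k)}$ with $\mu^{(k+1)}$ and then iterate on $k\in\Z$. The key input is the mixture identity \eqref{p2c5}, which displays $\mu^{(k)}$ as the convex combination of $\mu^{(k+1)}$ and the translate of $\mu^{(k+1)}$ by $c^{-k}$ with respective weights $p/(p+r)$ and $r/(p+r)$. The degenerate case $r=1$ (so $p=q=0$) is handled at once: by \eqref{p2c3} every $\mu^{(k)}$ is then a Dirac measure, so (i) and (ii) hold vacuously and (iii) gives dimension zero throughout. Assume henceforth $r<1$; by the remark preceding the theorem every $\mu^{(k)}$ is non-Dirac and, being $c^{-1}$-decomposable by Proposition~\ref{p2b}, is either continuous-singular or absolutely continuous.

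For (i), convex combinations and translates of absolutely continuous measures are absolutely continuous, so if $\mu^{(k+1)}$ is absolutely continuous then so is $\mu^{(k)}$. Conversely, if $\mu^{(k)}$ is absolutely continuous and $p,r>0$, then each summand in \eqref{p2c5} is absolutely continuous with respect to $\mu^{(k)}$ and hence with respect to Lebesgue measure, so $\mu^{(k+1)}$ is absolutely continuous. In the boundary case $r=0$ one has $\mu^{(k)}=\mu^{(k+1)}$, and in the boundary case $p=0$ the measure $\mu^{(k)}$ is just a translate of $\mu^{(k+1)}$; in both cases $\mu^{(k+1)}$ inherits absolute continuity. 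Iterating in both directions of $k$ yields (i).

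For (ii), the CS/AC dichotomy reduces continuous singularity to the negation of absolute continuity within the class of non-Dirac $c^{-1}$-decomposable measures, so (ii) follows from (i) applied to $\mu^{(k)}$ and $\mu^{(0)}$. For (iii), Hausdorff dimension is translation invariant, so the translate of $\mu^{(k+1)}$ by $c^{-k}$ has the same dimension as $\mu^{(k+1)}$. For a convex combination $\sg=\al\sg_1+\bt\sg_2$ with $\al,\bt>0$ one has $\dim(\sg)=\max(\dim(\sg_1),\dim(\sg_2))$: given $\ep>0$, pick Borel sets $B_i$ with $\sg_i(B_i)=1$ and $\dim B_i\les\dim(\sg_i)+\ep$, and note that $B_1\cup B_2$ has full $\sg$-measure, giving the upper bound; conversely, every set of full $\sg$-measure has full $\sg_i$-measure for each $i$, giving the lower bound. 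Applied to \eqref{p2c5} this gives $\dim(\mu^{(k)})=\dim(\mu^{(k+1)})$ when $p,r>0$, and the boundary cases $p=0$ or $r=0$ are immediate; iterating yields (iii).

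The whole argument is essentially routine bookkeeping about how absolute continuity, continuous singularity, and Hausdorff dimension behave under translation and under finite convex combinations with positive weights. There is no genuine obstacle; the only slight care required is separating the boundary cases $p=0$ and $r=0$ in (i) and (iii), which is why the proof is organized as a case distinction rather than purely in mixture form.
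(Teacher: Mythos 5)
Your proof is correct and follows essentially the same route as the paper: both reduce everything to the one-step mixture identity \eqref{p2c5}, treat the degenerate/boundary cases ($r=1$, $p=0$) separately, and prove the dimension equality by the same full-measure-set manipulations (your general "dimension of a mixture is the max" lemma is exactly the paper's two inequalities $d^{(k)}\leq d^{(k+1)}$ and $d^{(k+1)}\leq d^{(k)}$ packaged abstractly). No gaps.
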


\begin{proof}
It is enough to show that absolute continuity,
continuous-singularity, and the dimension of $\mu^{(k)}$ do not
depend on $k$. We use \eqref{p2c5}.

(i) If $p=0$, then $\mu^{(k)}$ is a translation of $\mu^{(k+1)}$ and
the assertion is obvious. Assume that $p>0$. Let $\mu^{(k+1)}$ be
absolutely continuous.  If $B$ is a Borel set with $\mrm{Leb}
(B)=0$, then $\mu^{(k+1)}(B)=0$, $\mrm{Leb} (B-c^{-k})=0$, and
$\mu^{(k+1)}(B-c^{-k})=0$ and hence $\mu^{(k)}(B)=0$ from
\eqref{p2c5}. Hence $\mu^{(k)}$ is absolutely continuous.
Conversely, let $\mu^{(k)}$ be absolutely continuous. If $B$ is a
Borel set with
 $\mrm{Leb} (B)=0$, then $\mu^{(k)}(B)=0$ and hence $\mu^{(k+1)}(B)=0$ from
 \eqref{p2c5} and from $p>0$.  Hence $\mu^{(k+1)}$ is absolutely continuous.

(ii) We know that $\mu^{(k)}$ is a Dirac measure if and only if $\mu^{(0)}$
is.  Hence (ii) is equivalent to (i).

(iii)  We may assume $p>0$.  Let $d^{(k)}=\dim\,(\mu^{(k)})$.
For any $\ep>0$ there is a Borel set $B$ such
that $\mu^{(k+1)}(B)=1$ and $\dim\,B <d^{(k+1)}+\ep$.
Since
\begin{align*}
\mu^{(k)}(B\cup(B+c^{-k}))&=\frac{p}{p+r}\mu^{(k+1)}(B\cup(B+c^{-k}))+
\frac{r}{p+r}\mu^{(k+1)}((B-c^{-k})\cup B)\\
&\geq\frac{p}{p+r}\mu^{(k+1)}(B)+\frac{r}{p+r}\mu^{(k+1)}(B)=1,
\end{align*}
we have  $\mu^{(k)}(B\cup(B+c^{-k}))=1$.
Since $\dim\,(B\cup(B+c^{-k}))=\dim\,B$, this shows $d^{(k)}\leq d^{(k+1)}$.
On the other hand, for any $\ep>0$ there is a Borel set $E$ such
that $\mu^{(k)}(E)=1$ and $\dim\,E <d^{(k)}+\ep$. If $\mu^{(k+1)}(E)<1$, then
\[
\mu^{(k)}(E)< \frac{p}{p+r}+\frac{r}{p+r}\mu^{(k+1)}(E-c^{-k})\leq 1,
\]
a contradiction. Hence $\mu^{(k+1)}(E)=1$
and $d^{(k+1)}\leq d^{(k)}$.
\end{proof}

  By virtue of Theorem \ref{t3a}, all results
on continuity properties of $\mu^{(0)}$ in \cite{LS} are applicable
to $\mu^{(k)}$, $k\in\Z$.  Thus, by the method of Erd\H{o}s \cite{Er},
$\mu^{(k)}$
is continuous-singular if $c$ is a Pisot--Vijayaraghavan number
and $q>0$ (see
the survey \cite{PSS} on this class of numbers).
On the other hand, for almost all $c$ in $(1,\infty)$, sufficient
conditions for absolute continuity of $\mu^{(k)}$ are given by an
essential use of results of Watanabe \cite{Wa00} (see \cite{LS}).

Recall that  for any discrete probability measure $\sg$ the entropy
$H(\sg)$ is defined by
\[
H(\sg)=-\sum_{x\in C} \sg(\{x\})\log \sg(\{x\}),
\]
where $C$ is the set of points of positive $\sg$ measure.

\begin{thm}\label{t3b}
Let $c,p,q,r$ be fixed and let $k\in\Z$.  We have
\begin{equation}\label{t3b1}
\dim\,(\mu^{(k)})\leq H(\rh^{(k)})/\log c
\end{equation}
and
\begin{equation}\label{t3b2}
H(\rh^{(k)})\leq H(\rh^{(1)}).
\end{equation}
More precisely,
\begin{equation}\label{t3b3}
H(\rh^{(k)})\begin{cases} =H(\rh^{(1)})\qquad &\text{if\/ $k>0$},\\
= H(\rh^{(1)})\qquad &\text{if\/ $k< 0$ and $c^{-k}\not\in\N$},\\
< H(\rh^{(1)})\qquad &\text{if\/ $k\le 0$, $c^{-k}\in\N$, and
$p,q,r>0$}.
\end{cases}
\end{equation}
\end{thm}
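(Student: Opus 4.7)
The plan has two distinct parts. For \eqref{t3b1}, the product formula \eqref{p2b2} realizes $\mu^{(k)}$ as the law of an iid geometric series $\sum_{n=0}^{\infty}c^{-n}Y_n$ with $Y_n\sim\rh^{(k)}$, placing us in the standard setting in which a Shannon--McMillan covering argument yields $\dim\mu\le H(\rh)/\log c$. For \eqref{t3b2}--\eqref{t3b3}, I would compute $H(\rh^{(k)})$ directly from \eqref{p2c1} and isolate the single source of $k$-dependence: whether the atomic supports of the two component sums in \eqref{p2c1} overlap.

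For \eqref{t3b1}, fix $\ep>0$ and observe that $H(\rh^{(k)})<\infty$ since the series $-\sum_{m\ge 0}q^mx\log(q^mx)$ converges for $x\in\{p,r\}$. Shannon--McMillan, applied to iid samples from $\rh^{(k)}$, produces for each large $N$ a typical set $A_N$ with $|A_N|\le e^{N(H(\rh^{(k)})+\ep)}$ and $P((Y_0,\dots,Y_{N-1})\in A_N)\ge 1-\ep$. Decomposing $\sum_{n\ge 0}c^{-n}Y_n=S_N+c^{-N}X'$ with $X'\eqd\mu^{(k)}$ independent of the partial sum $S_N$, and fixing $R$ with $\mu^{(k)}([-R,R])\ge 1-\ep$, the full sum lies with probability at least $1-2\ep$ in a set $E_N$ consisting of at most $|A_N|$ intervals of length $2Rc^{-N}$. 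A standard covering estimate then gives $\mcal H^s(\limsup_N E_N)=0$ for every $s>(H(\rh^{(k)})+\ep)/\log c$, while $\mu^{(k)}(\limsup_N E_N)\ge 1-2\ep$. Applying this along a sequence $\ep_n\downarrow 0$ and invoking Borel--Cantelli yields a full $\mu^{(k)}$-measure Borel set of Hausdorff dimension at most $H(\rh^{(k)})/\log c$, proving \eqref{t3b1}.

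For \eqref{t3b2}--\eqref{t3b3}, I split on whether the atomic supports $\N_0$ and $\N_0+c^{-k}$ of the two sums in \eqref{p2c1} are disjoint, which is equivalent to $c^{-k}\notin\N$. In the disjoint case,
\[
H(\rh^{(k)})=-\sum_{m=0}^{\infty}q^mp\log(q^mp)-\sum_{m=0}^{\infty}q^mr\log(q^mr),
\]
visibly independent of $k$; since $c^{-1}\in(0,1)$ always falls in this case, the common value equals $H(\rh^{(1)})$, establishing the first two lines of \eqref{t3b3}. If instead $c^{-k}=n\in\N$ (forcing $k\le 0$), each integer $j\ge n$ receives combined mass $q^jp+q^{j-n}r$, and the elementary inequality $a\log a+b\log b\le(a+b)\log(a+b)$ for $a,b\ge 0$ (equality iff $ab=0$) gives $H(\rh^{(k)})\le H(\rh^{(1)})$; strict inequality holds iff some pair $(q^jp,q^{j-n}r)$ has both entries positive. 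Inspecting $j=n$ (so that $q^{j-n}r=r$) this occurs exactly when $p,q,r>0$, whereas if any of $p,q,r$ vanishes every merger is trivial and equality persists. This completes \eqref{t3b3}, and \eqref{t3b2} is an immediate consequence.

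The main obstacle is the Hausdorff dimension bound \eqref{t3b1}: converting the asymptotic, probabilistic Shannon--McMillan statement into a genuine geometric cover, and then promoting a sequence of high-probability covers into a single Borel set of full $\mu^{(k)}$-measure having the claimed dimension, requires the standard but non-trivial Borel--Cantelli manoeuvre outlined above. The entropy comparison \eqref{t3b2}--\eqref{t3b3} is essentially bookkeeping once the overlap criterion for \eqref{p2c1} is identified.
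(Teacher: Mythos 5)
Your proposal is correct, and for the dimension bound \eqref{t3b1} it takes a genuinely different route from the paper. The paper disposes of \eqref{t3b1} in one line by invoking Theorem~2.2 of Watanabe (2000), which gives exactly the bound $\dim(\sigma)\le H(\rho)/\log c$ for a $c^{-1}$-decomposable law $\sigma$ with discrete factor $\rho$; your Shannon--McMillan typical-set covering argument, based on the random-series representation $\sum_{n\ge0}c^{-n}Y_n$ coming from \eqref{p2b2}, amounts to re-proving that result in this special case. What your approach buys is self-containedness (no appeal to Watanabe's theorem, and the mechanism behind the entropy/$\log c$ bound becomes visible); what it costs is length, and one point of the write-up is looser than it should be: a set of $\mu^{(k)}$-measure $\ge 1-2\ep$ of dimension $\le (H(\rh^{(k)})+\ep)/\log c$ is not yet what the definition of $\dim(\mu^{(k)})$ asks for, since that is an infimum over sets of \emph{full} measure. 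The fix is routine and within your framework: with $B_{\ep}$ denoting your $\limsup_N E_N$ for parameter $\ep$, the set $\bigcap_m\bigcup_{n\ge m}B_{\ep_n}$ has full $\mu^{(k)}$-measure (each union has measure $1$) and Hausdorff dimension $\le\inf_m(H(\rh^{(k)})+\ep_m)/\log c=H(\rh^{(k)})/\log c$; alternatively one can use the almost-sure form of Shannon--McMillan together with Borel--Cantelli on the tails $\sum_{n\ge N}c^{-(n-N)}Y_n$ (finite mean of $\mu^{(k)}$) to land in $\liminf_N E_N$ with probability one. For \eqref{t3b2}--\eqref{t3b3} your argument is essentially the paper's: the supports $\N_0$ and $\N_0+c^{-k}$ in \eqref{p2c1} are disjoint exactly when $c^{-k}\notin\N$ (in particular for $k>0$, and for $k=1$), so the entropy is then independent of $k$, while in the integer case atoms amalgamate and the merging inequality $-a\log a-b\log b\ge-(a+b)\log(a+b)$, strict iff $a,b>0$, gives $\le$ in general and $<$ precisely when $p,q,r>0$; you make explicit the inequality the paper only gestures at, which is a small improvement in rigor.
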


\begin{proof}
The inequality \eqref{t3b1} follows from Theorem 2.2 of Watanabe \cite{Wa00}.
 If $k>0$ or
if $k\le 0$ and $c^{-k}\not\in\N$, then, in the expression
\eqref{p2c1} of $\rh^{(k)}$, all $k$ and $k+c^{-k}$ for $k\in\N_0$
are distinct points and hence $H(\rh^{(k)})$ does not depend on $k$.
For general $k\in\Z$, some of the points $k$ and $k+c^{-k}$ for
$k\in\N_0$ may coincide, which makes the entropy smaller than or
equal to $H(\rh^{(1)})$. This proves \eqref{t3b2}.  If $k\le 0$,
$c^{-k}\in\N$, and $p,q,r>0$, then some of points with positive mass
indeed amalgamate and the entropy becomes smaller than
$H(\rh^{(1)})$.
\end{proof}

A straightforward calculus gives
\begin{equation}\label{3-1}
H(\rh^{(1)})=(-p\log p-q\log q-r\log r)/(1-q),
\end{equation}
with the interpretation $x\log x=0$ for $x=0$.

\begin{thm}\label{t3c}
Let $r<1$.  If\/ $\log c>(\log 3)/(1-q)$, then $\mu^{(k)}$
is continuous-singular for all $k\in\Z$.
\end{thm}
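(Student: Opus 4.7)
The plan is to combine three ingredients already established: the dichotomy that $\mu^{(k)}$ is either continuous-singular or absolutely continuous (from the preamble to Theorem~\ref{t3a}, valid since $r<1$ rules out the Dirac case), the dimension bound from Theorem~\ref{t3b}, and the elementary entropy estimate for a three-point distribution.

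First I would reduce the claim to showing $\dim(\mu^{(k)})<1$ for each $k\in\Z$. Indeed, once we know $\mu^{(k)}$ is either continuous-singular or absolutely continuous, it suffices to rule out absolute continuity, and any measure whose dimension is strictly less than $1$ must sit on a Borel set of Hausdorff dimension less than $1$, hence of Lebesgue measure zero.

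Next I would apply Theorem~\ref{t3b}: by \eqref{t3b1} and \eqref{t3b2},
\begin{equation*}
\dim(\mu^{(k)})\;\leq\;\frac{H(\rh^{(k)})}{\log c}\;\leq\;\frac{H(\rh^{(1)})}{\log c}.
\end{equation*}
Using the explicit formula \eqref{3-1} together with the elementary fact that the Shannon entropy of any probability distribution on a three-point set is at most $\log 3$, with equality only at the uniform distribution, gives
\begin{equation*}
H(\rh^{(1)})\;=\;\frac{-p\log p-q\log q-r\log r}{1-q}\;\leq\;\frac{\log 3}{1-q}.
\end{equation*}
Combining the two displays with the hypothesis $\log c>(\log 3)/(1-q)$ yields
\begin{equation*}
\dim(\mu^{(k)})\;\leq\;\frac{\log 3}{(1-q)\log c}\;<\;1,
\end{equation*}
which completes the reduction and hence the proof.

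There is no real obstacle; the whole argument is a short chain of inequalities once Theorem~\ref{t3b} and the $c^{-1}$-decomposability dichotomy are in hand. The only mild point to mention is that the step from $\dim(\mu^{(k)})<1$ to non-absolute-continuity uses the definition of $\dim(\sg)$ given in the introduction: taking $\varepsilon>0$ with $\dim(\mu^{(k)})+\varepsilon<1$, the infimum produces a Borel set $B$ with $\mu^{(k)}(B)=1$ and $\dim B<1$, and such a set has zero one-dimensional Hausdorff measure, hence zero Lebesgue measure.
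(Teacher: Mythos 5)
Your argument is correct and is essentially the paper's own proof: bound $H(\rh^{(1)})\leq(\log 3)/(1-q)$ via \eqref{3-1}, apply Theorem~\ref{t3b} to get $\dim(\mu^{(k)})<1$, and invoke the continuous-singular/absolutely continuous dichotomy for non-Dirac $c^{-1}$-decomposable laws. The extra detail you supply on why $\dim(\mu^{(k)})<1$ excludes absolute continuity is a correct elaboration of what the paper leaves implicit.
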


\begin{proof}
It follows from \eqref{3-1} that $H(\rh^{(1)})\leq (\log 3)/(1-q)$. Hence
by Theorem \ref{t3b} $\dim\,(\mu^{(k)})<1$ if $\log c>(\log 3)/(1-q)$.
\end{proof}

\section{General results on infinite divisibility for all $k$}

We give two theorems concerning the classification of $\rh^{(k)}$
and $\mu^{(k)}$, $k\in\Z$, into $ID$, $ID^0$, and $ID^{00}$. The
first theorem concerns $\rh^{(k)}$ and $\mu^{(k)}$, while the second
deals with $\mu^{(k)}$. We also obtain examples of
quasi-infinitely divisible distributions on $\R_+$
with quasi-L\'evy measure
being concentrated on $(-\infty,0)$.

\begin{thm}\label{t4a}
{\rm (i)}  If $p=0$ or if $r=0$, then $\rh^{(k)}$ and $\mu^{(k)}$
are in $ID$ for every $k\in\Z$.

{\rm (ii)}   If\/ $0<r<p$, then  $\rh^{(k)}$ and $\mu^{(k)}$ are in
$ID\cup ID^0$ for every $k\in\Z$, with quasi-L\'evy measures being
concentrated on $(0,\infty)$.

{\rm (iii)}  If\/ $0<p<r$, then $\rh^{(k)}$ and $\mu^{(k)}$ are in
$ID^{0}$ for every $k\in\Z$.

{\rm (iv)}  If\/ $p=r>0$, then $\rh^{(k)}$ and $\mu^{(k)}$ are in
$ID^{00}$ for every $k\in\Z$.
\end{thm}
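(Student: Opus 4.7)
I would start from the explicit formulas in Proposition~\ref{p2c} for $\wh\rh^{(k)}$ and $\wh\mu^{(k)}$ and take logarithms to read off the (quasi-)L\'evy measures. Cases (i) and (iv) are essentially immediate. When $r=0$ (resp.\ $p=0$) formula \eqref{p2c1} shows that $\rh^{(k)}$ is a geometric distribution (resp.\ its translate by $c^{-k}$), which lies in $ID$; since \eqref{p2b2} expresses $\wh\mu^{(k)}$ as an infinite product of such ID characteristic functions, $\mu^{(k)}\in ID$ as well. When $p=r$, $\wh\rh^{(k)}(z)=p(1+e^{ic^{-k}z})/(1-qe^{iz})$ vanishes at $z=c^k\pi(2m+1)$ for every $m\in\Z$, so $\rh^{(k)}$ is not quasi-ID; these same zeros appear already in the $n=0$ factor of \eqref{p2b2}, hence $\wh\mu^{(k)}$ also vanishes there, so $\mu^{(k)}\in ID^{00}$.

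For case (ii), $|r/p|<1$ permits writing $p+re^{ic^{-k}z}=p(1+(r/p)e^{ic^{-k}z})$ and expanding the logarithm as a power series; combined with $-\log(1-qe^{iz})=\sum_{j\ge 1}q^je^{ijz}/j$ this yields
\[
\log\wh\rh^{(k)}(z)=\sum_{j=1}^\infty\frac{(-1)^{j+1}}{j}\Bigl(\frac{r}{p}\Bigr)^j(e^{ijc^{-k}z}-1)+\sum_{j=1}^\infty\frac{q^j}{j}(e^{ijz}-1),
\]
so $\rh^{(k)}$ is quasi-ID with quasi-L\'evy measure concentrated on $(0,\infty)$. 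Summing over the scalings $z\mapsto c^{-n}z$ in \eqref{p2b2}, the quasi-L\'evy measure of $\mu^{(k)}$ is likewise on $(0,\infty)$; together with the non-vanishing of $\wh\mu^{(k)}$ (Proposition~\ref{p2c}) this gives (ii). For case (iii) one instead uses $|p/r|<1$ to write $p+re^{ic^{-k}z}=re^{ic^{-k}z}(1+(p/r)e^{-ic^{-k}z})$; the analogous expansion places atoms of $\nu_{\rh^{(k)}}$ at $\{-jc^{-k}:j\ge 1\}\subset(-\infty,0)$ with alternating signs, and the $j=2$ atom has coefficient $-(p/r)^2/2<0$, placing $\rh^{(k)}$ in $ID^0$.

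The main obstacle is proving $\mu^{(k)}\not\in ID$ in case (iii). Since $\mu^{(k)}$ is concentrated on $[0,\infty)$ (both integrand and integrator in \eqref{1-6} are non-negative), any ID representation of $\mu^{(k)}$ would force its L\'evy measure to vanish on $(-\infty,0)$; by uniqueness of the quasi-L\'evy--Khintchine triplet, it then suffices to exhibit one atom of $\nu_{\mu^{(k)}}$ on $(-\infty,0)$ with nonzero coefficient. The natural test atom is $-c^{-k}$: contributions come only from pairs $(n,j)$ with $jc^{-n}=1$, i.e.\ $j=c^n\in\N$, and the coefficient equals
\[
\frac{p}{r}+\sum_{m\ge 1,\,c^m\in\N}\frac{(-1)^{c^m+1}}{c^m}\Bigl(\frac{p}{r}\Bigr)^{c^m}.
\]
One must show this is strictly positive for every $c>1$ and every $p/r\in(0,1)$. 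If no $m\ge 1$ satisfies $c^m\in\N$ the coefficient is $p/r>0$; otherwise let $N\ge 2$ be the minimal positive integer value of $c^m$. For $N$ odd all correction terms are positive, and for $N\ge 4$ even the needed inequality reduces by monotonicity in $p/r$ to $g(1)=(N-2)/(N-1)>0$, where $g(x):=x-\sum_{k\ge 1}x^{N^k}/N^k$. The critical case $N=2$ uses $g(0)=g(1)=0$ together with $g'(0)=1>0$ and $g'(1)=-\infty$, which force $g$ to be unimodal and strictly positive on $(0,1)$. Combined with the non-vanishing of $\wh\mu^{(k)}$, this gives $\mu^{(k)}\in ID^0$.
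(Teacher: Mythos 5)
Your handling of (i), (ii), (iv) and of $\rh^{(k)}$ in (iii) is essentially the paper's own argument: the same logarithmic expansions of \eqref{p2c2} yield the quasi-L\'evy measures, and the zeros of $\wh\rh^{(k)}$ and $\wh\mu^{(k)}$ at $z=c^k\pi$ settle (iv). Where you genuinely diverge is the key claim $\mu^{(k)}\notin ID$ in (iii). The paper sidesteps all questions about which atoms of \eqref{quas-4} coincide by integrating $|x|$ against the restriction of the signed measure $\nu_{\mu^{(k)}}$ to $(-\infty,0)$: the double series is absolutely summable and sums in closed form to $\frac{c^{-k}}{1-c^{-1}}\cdot\frac{p/r}{1+p/r}>0$, so the restriction cannot be the zero measure, with no case analysis on the arithmetic of $c$. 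You instead test the single atom at $-c^{-k}$, which forces you to determine exactly which pairs $(n,m)$ land there and to prove positivity of $x+\sum_{m\ge1,\,c^m\in\N}(-1)^{c^m+1}c^{-m}x^{c^m}$ by splitting into $N$ odd, $N\ge4$ even and $N=2$, where $N=c^l$ is the minimal integer power. This route works, but it is longer and tacitly uses the fact that the integer values of $c^m$ are exactly $N,N^2,N^3,\dots$ (occurring at the multiples of $l$); this is true, and is proved in the paper inside Theorem~\ref{t6c}, but your sketch should cite or prove it.

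Two steps in your case analysis need (easy) patching. For $N\ge4$ you appeal to ``monotonicity in $p/r$'', but $g(x)=x-\sum_{j\ge1}x^{N^j}/N^j$ is not monotone on $(0,1)$ (indeed $g'(x)\to-\infty$ as $x\uparrow1$); what is true is that $g(x)/x=1-\sum_{j\ge1}x^{N^j-1}/N^j$ is decreasing, whence $g(x)\ge x\,g(1)=x\,\frac{N-2}{N-1}>0$, or more simply $x^{N^j}\le x^N\le x$ termwise. For $N=2$, the endpoint data $g(0)=g(1)=0$, $g'(0)=1>0$, $g'(1^-)=-\infty$ do not by themselves force unimodality; you need the strict concavity $g''(x)=-\sum_{j\ge1}(2^j-1)x^{2^j-2}<0$, which together with $g(0)=g(1)=0$ immediately gives $g>0$ on $(0,1)$. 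Finally, since $\nu_{\mu^{(k)}}$ has infinite total variation, before calling $\mu^{(k)}$ quasi-infinitely divisible (in both (ii) and (iii)) you should record, as the paper does, that $\int_{\R}|x|\,|\nu_{\mu^{(k)}}|(dx)<\infty$, which gives the required $\int_{\R}(x^2\land1)\,|\nu_{\mu^{(k)}}|(dx)<\infty$ and also justifies the drift term produced by the $n$-summation. With these repairs your argument is correct; the paper's version of (iii) remains the shorter one.
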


It is noteworthy that in this theorem the results do
not depend on $k$ and the results for $\rh^{(k)}$ and $\mu^{(k)}$ are the same.
By virtue of this theorem, in the classification of $\rh^{(k)}$ and $\mu^{(k)}$,
it remains only to find, in the case $0<r<p$, necessary and sufficient
conditions for their infinite divisibility.

\begin{proof}[Proof of Theorem \ref{t4a}]
If $r=0$, then \eqref{p2c1} shows that $\rh^{(k)}$ does not depend
on $k$, is a geometric distribution, and hence in $ID$, which
implies that $\mu^{(k)}$ does not depend on $k$ and is in $ID$. If
$p=0$, then \eqref{p2c1} shows that $\rh^{(k)}$ is a shifted
geometric distribution, and hence in $ID$, implying that
$\mu^{(k)}\in ID$. Hence (i) is true.

Let us prove (ii). Assume that $0<r<p$.
It follows from \eqref{p2c2} that
\begin{align*}
\wh\rh^{(k)}(z)
&=\exp\left[ -\log(1-qe^{iz})+\log(1+(r/p)e^{ic^{-k}z}) +\log p\right]\\
&=\exp\left[ \sum_{m=1}^{\infty}\left( m^{-1}q^m e^{imz}-m^{-1}(-r/p)^m
e^{imc^{-k}z}\right) +\log p \right]\\
&=\exp\left[ \sum_{m=1}^{\infty}\left( m^{-1}q^m (e^{imz}-1)-m^{-1}(-r/p)^m
(e^{imc^{-k}z}-1)\right) +\mrm{const} \right]
\end{align*}
and, letting $z=0$, we see that the constant is zero. Hence
\begin{equation}\label{t4b1}
\wh\rh^{(k)}(z)=\exp\left[\int_{(0,\infty)} (e^{izx}-1)\nu_{\rh^{(k)}}(dx)
\right],
\end{equation}
where $\nu_{\rh^{(k)}}$ is a signed measure
given by
\begin{equation}\label{t4b2}
\nu_{\rho^{(k)}} = \sum_{m=1}^\infty \left[ m^{-1} q^m \delta_m
+ (-1)^{m+1} m^{-1} (r/p)^m \delta_{m c^{-k}}\right]
\end{equation}
with finite total variation.
Then it follows from \eqref{p2b2} that
\begin{equation}\label{t4b3}
\wh\mu^{(k)}(z)=\exp\left[\int_{(0,\infty)} (e^{izx}-1)\nu_{\mu^{(k)}}(dx)
\right]
\end{equation}
with
\begin{equation}\label{t4b4}
\nu_{\mu^{(k)}} = \sum_{n=0}^\infty \sum_{m=1}^\infty \left[ m^{-1} q^m
\delta_{mc^{-n}} + (-1)^{m+1}m^{-1} (r/p)^m \delta_{m c^{-k-n}}\right].
\end{equation}
Notice that $\int_{(0,\infty)} x\left| \nu_{\mu^{(k)}}\right|
(dx)<\infty$. Hence $\rh^{(k)}$ and $\mu^{(k)}$ are in $ID\cup ID^0$
for every $k\in\Z$, and the quasi-L\'evy measures are concentrated
on $(0,\infty)$ by \eqref{t4b2} and \eqref{t4b4}.

To prove (iii), assume $0<p<r$. Then by \eqref{p2c2} and a
calculation similar to the one which lead to \eqref{t4b1}
\begin{align*}
\wh\rh^{(k)}(z) 
&=\frac{1+(p/r)e^{-ic^{-k} z}}{1-qe^{iz}} re^{ic^{-k} z}\\
&=\exp\,[-\log(1-qe^{iz}) +\log(1+(p/r)e^{-ic^{-k} z}) +\log r+ic^{-k} z ]\\
&=\exp\left[\sum_{m=1}^{\infty} m^{-1}q^me^{imz}
-\sum_{m=1}^{\infty} m^{-1}(-p/r)^m e^{-imc^{-k} z} +\log r+ic^{-k}z
\right].
\end{align*}
Thus
\begin{equation}
\wh\rh^{(k)}(z) =\exp\left[\int_{\R}(e^{izx}-1)\nu_{\rh^{(k)}}(dx)+ic^{-k} z
\right], \label{quas-1}
\end{equation}
where
\begin{equation} \label{quas-2}
\nu_{\rh^{(k)}}=\sum_{m=1}^{\infty} m^{-1}q^m \dl_m
+\sum_{m=1}^{\infty} (-1)^{m+1} m^{-1}(p/r)^m \dl_{-mc^{-k}}.
\end{equation}
Clearly the negative part in the Jordan decomposition of
$\nu_{\rh^{(k)}}$ is non-zero.  Hence $\rh^{(k)}\in ID^0$. As in
(ii), this together with \eqref{p2b2} implies
\begin{align}
\wh\mu^{(k)}(z)  
&=\exp\left[\int_{\R}(e^{izx}-1)\nu_{\mu^{(k)}}(dx)+i\sum_{n=0}^\infty
c^{-k-n} z \right] \label{quas-3}
\end{align}
with
\begin{equation} \label{quas-4}
\nu_{\mu^{(k)}}=\sum_{n=0}^{\infty} \sum_{m=1}^{\infty} \left(
m^{-1}q^m \dl_{c^{-n}m} +(-1)^{m+1} m^{-1}(p/r)^m
\dl_{-mc^{-k-n}}\right).
\end{equation}
Again we have $\int_{\R} |x|\,|\nu_{\mu^{(k)}}|(dx)<\infty$. Hence
$\mu^{(k)}\in ID\cup ID^0$.  If  $\mu^{(k)}\in ID$, then not only
$\nu_{\mu^{(k)}}$ is non-negative but also $\nu_{\mu^{(k)}}$ is
concentrated on $(0,\infty)$, since $\mu^{(k)}$ is concentrated on
$\R_+$.  However
\begin{align*}
&\int_{(-\infty,0)} |x|\,\nu_{\mu^{(k)}}(dx)=
\sum_{n=0}^{\infty} \sum_{m=1}^{\infty} mc^{-k-n}(-1)^{m+1} m^{-1}(p/r)^m\\
&\qquad =\sum_{n=0}^{\infty} c^{-k-n} \sum_{m=1}^{\infty}
(-1)(-p/r)^m =\frac{c^{-k}}{1-c^{-1}} \frac{p/r}{1+p/r}\neq 0.
\end{align*}
Thus $\nu_{\mu^{(k)}}$ is not concentrated on $(0,\infty)$.  It
follows that $\mu^{(k)}\not\in ID$.

To show (iv), observe that if $p=r>0$, then $\wh\rh^{(k)}(z)=0$ and
$\wh\mu^{(k)}(z)=0$ for $z=c^k \pi$ from \eqref{p2c2} and
\eqref{1-8}, which implies that $\rh^{(k)}$ and $\mu^{(k)}$ are in
$ID^{00}$.
\end{proof}

The assertion (iii) is new even in the case $k=0$. In Theorem~2.2
of~\cite{LS} it was only shown that $\mu^{(0)}\not\in ID$ if $0<p<r$
by using the representation $e^{-\varphi(\theta)}$ for the Laplace
transform of infinitely divisible distributions on $\R_+$ with
$\varphi'(\theta)$ being completely monotone. The present proof of
Theorem~\ref{t4a}~(iii) is simpler and shows even that $\mu^{(0)}
\in ID^0$.

It is worth noting that in contrast to infinitely divisible
distributions, whose L\'evy measure must be concentrated on
$(0,\infty)$ if the distribution itself is concentrated on $\R_+$,
the proof of the previous Theorem shows that the same conclusion
does not hold for quasi-infinitely divisible distributions. Even
more surprising, the quasi-L\'evy measure of a
quasi-infinitely divisible distribution on $\R_+$
can be concentrated on
$(-\infty,0)$.

\begin{cor} \label{cor-4.2}
Let $q=0$, $0<p<r$ and $k\in \Z$. Then $\rho^{(k)}$ and $\mu^{(k)}$
both have bounded support contained in $\R_+$, but the quasi-L\'evy
measures $\nu_{\rho^{(k)}}$ and $\nu_{\mu^{(k)}}$ in the
L\'evy--Khintchine-like representations \eqref{quas-1} and
\eqref{quas-3}, respectively, are concentrated on $(-\infty, 0)$.
\end{cor}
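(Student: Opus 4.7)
The plan is to derive both conclusions by specializing formulas already established in the proof of Theorem~\ref{t4a}(iii) to the case $q=0$. Since $q=0$ means $p+r=1$, the hypothesis $0<p<r$ just says $0<p<1/2<r<1$, so the assumption of Theorem~\ref{t4a}(iii) is satisfied and we may freely use \eqref{quas-1}--\eqref{quas-4}.

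First I would establish that $\rh^{(k)}$ and $\mu^{(k)}$ have bounded support contained in $\R_+$. With $q=0$ the representation \eqref{p2c1} collapses to $\rh^{(k)}=p\,\dl_0+r\,\dl_{c^{-k}}$, which is supported on $\{0,c^{-k}\}\subset\R_+$ and is trivially bounded. For $\mu^{(k)}$, by Proposition~\ref{p2b} we can realize it as the law of the almost-sure limit $\sum_{n=0}^\infty c^{-n}Y_n$, where $Y_0,Y_1,\ldots$ are i.i.d.\ with law $\rh^{(k)}$; the summands lie in $[0,c^{-k-n}]$, so
\[
0\le \sum_{n=0}^\infty c^{-n}Y_n \le \sum_{n=0}^\infty c^{-k-n}=\frac{c^{-k}}{1-c^{-1}},
\]
giving boundedness and $\R_+$-support.

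Next I would verify the claim about the quasi-L\'evy measures simply by setting $q=0$ in \eqref{quas-2} and \eqref{quas-4}. Since $q^m=0$ for every $m\ge 1$, every $\dl_m$ and $\dl_{c^{-n}m}$ term drops out, leaving
\[
\nu_{\rh^{(k)}}=\sum_{m=1}^{\infty}(-1)^{m+1}m^{-1}(p/r)^m\,\dl_{-mc^{-k}},\qquad
\nu_{\mu^{(k)}}=\sum_{n=0}^{\infty}\sum_{m=1}^{\infty}(-1)^{m+1}m^{-1}(p/r)^m\,\dl_{-mc^{-k-n}},
\]
both of which are supported on $\{-mc^{-k-n}:m\ge 1,\,n\ge 0\}\subset(-\infty,0)$. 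Because $0<p<r$ the coefficients $(-1)^{m+1}m^{-1}(p/r)^m$ are non-zero and summable in total variation, so these are genuinely non-trivial signed measures on $(-\infty,0)$, matching the quasi-infinite divisibility asserted in Theorem~\ref{t4a}(iii).

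There is no real obstacle here: the corollary is a direct reading of the formulas produced inside the proof of Theorem~\ref{t4a}(iii) once one notices that the $q=0$ specialization annihilates precisely the portion of the quasi-L\'evy measure that would otherwise sit on $(0,\infty)$. The only thing worth checking carefully is that the compensating drift terms $ic^{-k}z$ and $i\sum_n c^{-k-n}z$ in \eqref{quas-1} and \eqref{quas-3} are correctly produced from the ``$+\log r$'' and the absorbed $e^{ic^{-k}z}$ factors; this bookkeeping is already done in the proof of Theorem~\ref{t4a}, so no further computation is needed.
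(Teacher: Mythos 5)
Your proposal is correct and follows essentially the same route as the paper: identify $\rh^{(k)}$ as the two-point law $p\,\dl_0+r\,\dl_{c^{-k}}$ via \eqref{p2c1} with $q=0$, recognize $\mu^{(k)}$ as a scaled infinite Bernoulli convolution supported in $[0,c^{1-k}/(c-1)]$ (your bound $c^{-k}/(1-c^{-1})$ is the same number), and read off the quasi-L\'evy measures by setting $q=0$ in \eqref{quas-2} and \eqref{quas-4}. The paper leaves the last step as ``immediate from \eqref{quas-1}--\eqref{quas-4}''; you have merely written it out, which is fine.
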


\begin{proof}
Since $q=0$, $\rho^{(k)}$  has distribution supported on two points
$0$, $c^{-k}$ and $\mu^{(k)}$ is a scaled infinite Bernoulli
convolution, which is supported on $[0,c^{1-k}/(c-1)]$.  The
assertion on the quasi-L\'evy measures is immediate from
\eqref{quas-1} -- \eqref{quas-4}.
\end{proof}

Actually, in the Gnedenko--Kolmogorov book \cite{GK}, the example in
p.\,81 gives, after shifting by $+1$, the distribution obtained by
deleting some mass at the point $0$ from a geometric distribution
and by normalizing.  It coincides with $\rh^{(0)}$ for $0<p<r$
and $q>0$.

\medskip
\noindent{\it Remark}.  Consider $\sg=p\dl_0+r\dl_1$ with $0<p<1$
and $r=1-p$.  If $p=r=1/2$, then $\sg\in ID^{00}$.  If $p>r$, then
$\sg\in ID^0$ with quasi-L\'evy measure supported on $\N$. If $p<r$,
then $\sg\in ID^0$ with quasi-L\'evy measure supported on $-\N$.
Indeed, $\sg=\rh^{(0)}$ with $q=0$ and the proof of Theorem
\ref{t4a} shows this fact.  By scaling and shifting, we see that any
distribution supported on two points in $\R$ has similar properties.
\medskip

We have the following monotonicity property of $\mu^{(k)}$ in $k$.

\begin{thm}\label{t4b}
Let $k\in\Z$ and the parameters $c, p,q,r$ be fixed.
If $\mu^{(k)}\in ID$, then $\mu^{(k+1)}\in ID$.
\end{thm}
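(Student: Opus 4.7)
The plan is to read off the conclusion directly from the identity \eqref{p2c6}, which expresses $\wh\mu^{(k+1)}$ as a product of two characteristic functions, one of which is $\wh\mu^{(k)}$ composed with a scaling.

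First I would recall from \eqref{p2c6} that
\[
\wh\mu^{(k+1)}(z)=\wh\mu^{(k)}(c^{-1}z)\,\frac{1-q}{1-qe^{iz}}, \qquad z\in\R.
\]
Next I would identify the second factor. Since the standing assumption $p+r>0$ forces $q<1$, the function $z\mapsto (1-q)/(1-qe^{iz})$ is the characteristic function of the geometric distribution $\sum_{m=0}^{\infty}(1-q)q^m\dl_m$ on $\N_0$, which is infinitely divisible; indeed a direct expansion gives
\[
\frac{1-q}{1-qe^{iz}}=\exp\left[\sum_{m=1}^{\infty} m^{-1}q^m(e^{imz}-1)\right],
\]
displaying it as a compound Poisson characteristic function with L\'evy measure $\sum_{m=1}^{\infty} m^{-1}q^m\dl_m$.

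Now assume $\mu^{(k)}\in ID$. Infinite divisibility is preserved under scaling, so $z\mapsto\wh\mu^{(k)}(c^{-1}z)$ is also the characteristic function of an infinitely divisible distribution (namely the law of $c^{-1}X$ where $X$ has law $\mu^{(k)}$). Therefore $\wh\mu^{(k+1)}$ is the product of two infinitely divisible characteristic functions, and hence $\mu^{(k+1)}$, being the convolution of two infinitely divisible distributions, is itself infinitely divisible. There is no real obstacle here; all the work has been done in Proposition~\ref{p2c}, and the only point to watch is the standing hypothesis $p+r>0$, which ensures $q<1$ and thus that $(1-q)/(1-qe^{iz})$ is well-defined as a probability characteristic function.
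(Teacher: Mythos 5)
Your proposal is correct and follows exactly the paper's argument: it invokes the identity \eqref{p2c6}, identifies the factor $(1-q)/(1-qe^{iz})$ as the (infinitely divisible) characteristic function of a geometric distribution (or $\dl_0$ when $q=0$), and uses closure of $ID$ under scaling and convolution. Nothing further is needed.
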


\begin{proof}
Recall the relation \eqref{p2c6} in Proposition \ref{p2c}.  The
factor $(1-q)(1-qe^{iz})^{-1}$ is the characteristic function of a
geometric distribution if $q>0$ and of $\delta_0$ if $q=0$, both of
which are infinitely divisible. Hence the proof is straightforward.
\end{proof}

\section{Conditions for infinite divisibility for $k>0$}

In the classification of $\rh^{(k)}$ and
$\mu^{(k)}$ into $ID$, $ID^0$, and $ID^{00}$, it remains only to find
a necessary and sufficient condition for infinite divisibility
in the case $0<r<p$ (see Theorem \ref{t4a}).

The results in \cite{LS} show the
following for $\rh^{(0)}$ and $\mu^{(0)}$.

\begin{prop}\label{p5a} Assume $0<r<p$.
{\rm(i)} If $r\leq pq$, then $\rh^{(0)},\,\mu^{(0)}\in ID$.

{\rm(ii)} If $r>pq$,  then $\rh^{(0)},\,\mu^{(0)}\in ID^0$.
\end{prop}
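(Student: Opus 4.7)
My plan is to read everything off the quasi-L\'evy measure computed in Theorem~\ref{t4a}(ii). Setting $k=0$ in \eqref{t4b2} gives
\[
\nu_{\rh^{(0)}}=\sum_{m=1}^{\infty}a_m\,\dl_m,\qquad
a_m=m^{-1}\bigl[q^m+(-1)^{m+1}(r/p)^m\bigr].
\]
Since $\rh^{(0)}$ and $\mu^{(0)}$ both lie in $ID\cup ID^0$ by Theorem~\ref{t4a}(ii), each is in $ID$ iff its quasi-L\'evy measure is non-negative. For $\rh^{(0)}$ this check is immediate: $a_m\geq0$ is automatic for odd $m$, and for even $m$ it reduces to $q\geq r/p$, i.e.\ $r\leq pq$. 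So the threshold of the proposition is visible already at the level of $\rh^{(0)}$, and the only real work is the translation to $\mu^{(0)}$.

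For part (i), assume $r\leq pq$. Then every $a_m\geq0$, so $\nu_{\rh^{(0)}}$ is a finite non-negative measure on $(0,\infty)$ and $\rh^{(0)}$ is compound Poisson, hence $\rh^{(0)}\in ID$. The product formula \eqref{p2b2} writes $\wh\mu^{(0)}$ as the infinite product of the $ID$ characteristic functions $\wh\rh^{(0)}(c^{-n}\,\cdot\,)$, and by Proposition~\ref{p2a} this product is the characteristic function of the probability measure $\mu^{(0)}$; a weak limit of $ID$ distributions that is itself a probability distribution stays in $ID$, so $\mu^{(0)}\in ID$.

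For part (ii), assume $r>pq$. Then $a_2=\tfrac12(q^2-(r/p)^2)<0$, so $\nu_{\rh^{(0)}}$ has a negative atom at $m=2$ and $\rh^{(0)}\notin ID$, which with Theorem~\ref{t4a}(ii) yields $\rh^{(0)}\in ID^0$. The real work is to show $\mu^{(0)}\notin ID$. Setting $k=0$ in \eqref{t4b4} gives
\[
\nu_{\mu^{(0)}}=\sum_{n=0}^{\infty}\sum_{m=1}^{\infty}a_m\,\dl_{mc^{-n}},
\]
so I need to exhibit a single $x>0$ whose net mass $M(x)=\sum_{\{(m,n):\,mc^{-n}=x\}}a_m$ is strictly negative. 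My candidate is $x=2^M$ for large $M\in\N$, which automatically picks up the negative term $a_{2^M}$ from $(m,n)=(2^M,0)$; the only risk is that pairs $(m,n)$ with $n\geq1$ contribute odd values of $m$ (hence positive $a_m$) which cancel this out.

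An elementary arithmetic case split handles this. Writing $c=a/b$ with $\gcd(a,b)=1$ when $c$ is rational, the equation $mc^{-n}=2^M$ has an integer solution with $n\geq1$ only when $c=a/2^s$ with $a$ odd and $s\geq0$, and even then the only contribution with $m$ odd occurs at $n=M/s$, which requires $s\mid M$. For $c$ irrational, $c$ rational with denominator not a power of $2$, integer $c$, or $c=a/2^s$ with $s\geq2$ and $s\nmid M$, every contributing $m$ is even, so $M(2^M)<0$ directly. The main obstacle is the case $c=a/2$ with odd $a\geq3$, in which the single positive term $a_{a^M}$ survives; using $r/p\in(0,1)$ and $a/2>1$ one estimates
\[
\frac{|a_{2^M}|}{a_{a^M}}\sim\Big(\tfrac{a}{2}\Big)^{\!M}\Big(\tfrac{p}{r}\Big)^{\!a^M-2^M}\longrightarrow\infty
\]
as $M\to\infty$, so $a_{2^M}+a_{a^M}<0$ for large $M$ and therefore $M(2^M)<0$ too (the remaining contributions at intermediate $n$ are all negative and only help). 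With Theorem~\ref{t4a}(ii) this gives $\mu^{(0)}\in ID^0$ and completes the plan.
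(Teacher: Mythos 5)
Your reduction of everything to the sign of the quasi-L\'evy measures from Theorem~\ref{t4a}(ii) is legitimate (the paper itself only cites \cite{LS} for Proposition~\ref{p5a}, and your method is essentially the one used there and in the proof of Theorem~\ref{t5b}); part (i) and the $\rho^{(0)}$-half of part (ii) are correct. The genuine gap is in the case analysis for $\mu^{(0)}\notin ID$: you split according to whether $c$ itself is rational and claim that for irrational $c$ every pair $(m,n)$ with $mc^{-n}=2^M$, $n\geq 1$, has $m$ even, so that $M(2^M)<0$ ``directly''. What actually matters is whether some \emph{power} of $c$ is rational. Take $c=\sqrt{3/2}$: $c$ is irrational, but $c^2=3/2$, and the pair $(m,n)=(3^M,2M)$ satisfies $mc^{-n}=2^M$, so the net mass at $2^M$ contains the positive term $a_{3^M}$ --- exactly the obstacle you isolated for $c=a/2$ --- and your ``directly negative'' conclusion fails. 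The same happens for every $c$ whose smallest rational power $c^{l_0}$ equals $\alpha/2$ with $\alpha\geq 3$ odd, and when $c^{l_0}$ has denominator $2^f$ with $f\geq 2$ you must in addition choose $M$ with $f\nmid M$, a restriction you only imposed in the rational case.

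The repair is your own estimate run on $c^{l_0}$ instead of $c$: since $\{n\in\Z\colon c^n\in\Q\}$ is a subgroup of $\Z$, let $l_0$ be its positive generator and write $c^{l_0}=\alpha/\beta$ in lowest terms; an integer solution of $mc^{-n}=2^M$ with $n\geq1$ forces $l_0\mid n$ and $\beta=2^f$, the contributing $m$ are then $2^{M-fj}\alpha^j$, all even (hence $a_m<0$) except possibly the single term with $fj=M$. For $f=0$, or $f\geq2$ with $M$ chosen so that $f\nmid M$, there is no such term and $M(2^M)\leq a_{2^M}<0$; for $f=1$ the lone positive term is $a_{\alpha^M}$ and
$|a_{2^M}|/a_{\alpha^M}\geq \tfrac12\bigl(1-(pq/r)^2\bigr)(\alpha/2)^M(p/r)^{\alpha^M-2^M}\to\infty$,
so $M(2^M)\leq a_{2^M}+a_{\alpha^M}<0$ for large $M$. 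This is precisely the bookkeeping the paper performs (following Case 2 of Theorem 2.2(b) in \cite{LS}) in the proof of Theorem~\ref{t5b} via $l_0$, $\beta$, $f$ and the points $m_j=2^{jf}$; with the dichotomy corrected in this way your argument closes the gap and coincides in substance with that proof.
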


We stress that
$\rh^{(0)}\in ID$ and $\mu^{(0)}\in ID$ are equivalent and that the
classification does not depend on $c$.

For $k$ a nonzero integer, to find the infinite divisibility condition is harder.
The condition depends on $c$,
and $\mu^{(k)}\in ID$ does not necessarily imply $\rh^{(k)}\in ID$.

\begin{thm}\label{t5a}
Let $k\in\N$.  Assume $0<r<p$. Then $\rh^{(k)}\in ID$ if
and only if $c^k=2$ and $r^2\leq p^2q$.
\end{thm}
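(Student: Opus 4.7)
The plan is to leverage the quasi-L\'evy measure representation already established in the proof of Theorem~\ref{t4a}(ii). Since $0<r<p$, formula \eqref{t4b2} reads
\[
\nu_{\rh^{(k)}}=\sum_{m=1}^{\infty}m^{-1}q^m\dl_m+\sum_{m=1}^{\infty}(-1)^{m+1}m^{-1}(r/p)^m\dl_{mc^{-k}},
\]
and since we are in the $ID\cup ID^0$ regime, $\rh^{(k)}\in ID$ is equivalent to $\nu_{\rh^{(k)}}$ being a (nonnegative) measure.

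First I would show that $c^k=2$ is forced. The map $m\mapsto mc^{-k}$ is injective, so all atoms of the second sum sit at mutually distinct locations; in particular the strictly negative atom $-m^{-1}(r/p)^m$ at $mc^{-k}$ for each even $m\ge 2$ can be cancelled only by an atom of the first sum, which forces $mc^{-k}\in\N$. Applying this with $m=2$ and observing $0<2c^{-k}<2$ (since $c>1$ and $k\ge 1$) gives $2c^{-k}=1$, hence $c^k=2$.

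Next, under $c^k=2$ I would check when $\nu_{\rh^{(k)}}\ge 0$. With $c^{-k}=1/2$, the odd indices in the second sum contribute positive atoms at the half-integers, which do not meet the support of the first sum. The even index $m=2n$ is the unique one landing at the integer $n$, giving total mass
\[
n^{-1}q^n-(2n)^{-1}(r/p)^{2n}
\]
at $n$; non-negativity there rearranges to $r^2\le 2^{1/n}p^2q$. The binding constraint, obtained as $n\to\infty$, is $r^2\le p^2q$, and this single inequality in turn implies the pointwise bound for every $n\ge 1$.

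The main obstacle is justifying the necessity of $c^k=2$. One has to confirm that negative atoms of the second sum cannot be cancelled internally within the second sum (excluded by injectivity) and cannot be absorbed at locations other than genuine positive integers, so that cancellation must come from the first sum. Once this arithmetic point is secured, the rest is a routine pointwise comparison of atoms of $\nu_{\rh^{(k)}}$.
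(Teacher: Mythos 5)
Your proof is correct and follows essentially the same route as the paper: both work from the quasi-L\'evy measure \eqref{t4b2}, use that $\rh^{(k)}\in ID$ iff $\nu_{\rh^{(k)}}\ge0$, force $2c^{-k}\in\N$ (hence $c^k=2$ for $k\ge1$) from the negative atom at $2c^{-k}$, and then reduce the pointwise inequalities at the integers to $r^2\le p^2q$ by letting the index tend to infinity. The only difference is cosmetic: the paper keeps $s=2c^{-k}\in\N$ general (for reuse in Theorem~\ref{t6a}) before specializing to $k>0$, whereas you specialize immediately.
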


\begin{proof}
For later use in the proof of Theorem~\ref{t6a} allow for the moment
that $k\in \Z$.  We have shown the expression \eqref{t4b1} for
$\wh\rh^{(k)}(z)$ with the signed measure $\nu_{\rh^{(k)}}$ of
\eqref{t4b2}. We have $\rh^{(k)}\in ID$ if and only if
$\nu_{\rh^{(k)}}\geq 0$.  If $s:= 2c^{-k} \not\in \N$, then
$\rh^{(k)}\in ID^0$, since $\nu_{\rho^{(k)}} (\{2c^{-k}\}) = -
2^{-1} (r/p)^2 < 0$. If $2 c^{-k}=s \in \N$, then
\begin{align*}
\rh^{(k)}\in ID &\quad\Leftrightarrow\quad \sum_{m=1}^\infty\left[
 m^{-1} q^m \delta_m + m^{-1} (-1)^{m+1} (r/p)^m \delta_{sm/2}\right]
\geq0\\
& \quad\Leftrightarrow\quad (sm)^{-1} q^{sm} \geq (2m)^{-1}
(r/p)^{2m}, \quad
\forall\; m\in\N,
\end{align*}
which is equivalent to
\begin{equation}
q^s \geq (s/2)^{1/m} (r/p)^2, \quad
\forall\; m\in\N .\label{eq-for-rho}
\end{equation}
Now assume that $k>0$. Then necessarily $s=1$ and \eqref{eq-for-rho}
is equivalent to $q \geq (r/p)^2$. Thus the proof is complete.
\end{proof}

\begin{thm}\label{t5b}
Let $k\in\N$. Assume $0<r<p$. Then $\mu^{(k)}\in ID$ if
and only if one of the following holds: {\rm(a)} $r\leq pq$;
{\rm(b)} $c^l=2$ for some $l\in\{ 1,2,\ldots,k\}$ and $r^2\leq p^2q$.
\end{thm}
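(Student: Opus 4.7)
For (a): Proposition~\ref{p5a} yields $\mu^{(0)}\in ID$, so Theorem~\ref{t4b} applied $k$ times gives $\mu^{(k)}\in ID$. For (b): Theorem~\ref{t5a} gives $\rh^{(l)}\in ID$, hence by the product formula \eqref{p2b2} the characteristic function $\wh\mu^{(l)}(z)=\prod_{n\ge 0}\wh\rh^{(l)}(c^{-n}z)$ is a convergent infinite product of $ID$ characteristic functions (each a scaled copy of $\rh^{(l)}$); as $ID$ is closed under convolution and weak limits, $\mu^{(l)}\in ID$. Applying Theorem~\ref{t4b} $(k-l)$ more times yields $\mu^{(k)}\in ID$.

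\textbf{Necessity setup.} Assume $\mu^{(k)}\in ID$. If $r\le pq$ we are in case (a). Otherwise $r>pq$, so $\mu^{(0)}\not\in ID$ by Proposition~\ref{p5a}; by Theorem~\ref{t4b}, there is a minimal $k_0\in\{1,\ldots,k\}$ with $\mu^{(k_0)}\in ID$ (and $\mu^{(k_0-1)}\not\in ID$). By Theorem~\ref{t4a}(ii), since $0<r<p$, $\mu^{(k_0)}\in ID$ is equivalent to nonnegativity of the quasi-L\'evy measure $\nu_{\mu^{(k_0)}}$ given by \eqref{t4b4}. The plan is to show $c^l=2$ for some $l\in\{1,\ldots,k\}$ and $r^2\le p^2q$, which establishes (b).

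\textbf{Arithmetic of $c$.} Examine $\nu_{\mu^{(k_0)}}(\{2c^{-k_0-n_0}\})\ge 0$; this atom carries the negative term $-(r/p)^2/2$ from $(m,n)=(2,n_0)$ in \eqref{t4b4}, while all other contributions are indexed by $j\in\Z$ with $2c^j\in\N$. Let $j_0\ge 1$ be smallest (if it exists) with $c^{j_0}\in\Q$, and write $c^{j_0}=a/b$ in lowest terms with $a>b\ge 1$. A short divisibility argument restricts the admissible $j$ to $j_0\Z$ and then bounds the admissible multiples. Case analysis: if no $j_0$ exists or $b\ge 3$, only $j=0$ contributes, forcing $q^2\ge(r/p)^2$ and contradicting $r>pq$; if $b=1,\,a\ge 3$, the contributions sum to $\sum_{i\ge 0}(2a^i)^{-1}[q^{2a^i}-(r/p)^{2a^i}]$, uniformly negative, a contradiction; if $b=2,\,a$ odd $\ge 3$, a joint inspection of the atoms at $\{2c^{-k_0-n_0}\}$ and $\{4c^{-k_0-n_0}\}$ (where $4c^j\in\N$ iff $j\in\{0,j_0,2j_0\}$) yields inequalities that cannot simultaneously hold. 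Hence $c^{j_0}=2$, i.e., $c=2^{1/j_0}$. A last comparison---the inequality $g(q)-q\ge\tfrac12 g((r/p)^2)$ extracted from the same atom when $j_0>k_0$, with $g(x):=\sum_{i\ge 0}2^{-i}x^{2^i}$---fails term-by-term since $q<r/p$ forces $q^{2^i}<(r/p)^{2\cdot 2^{i-1}}$; thus $j_0\le k_0\le k$, and $l:=j_0$ gives $c^l=2$ with $l\in\{1,\ldots,k\}$.

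\textbf{Deducing $r^2\le p^2q$.} With $c=2^{1/l}$, compute $\nu_{\mu^{(k_0)}}(\{M c^{-k_0}\})=\nu_{\mu^{(k_0)}}(\{M/2\})$ for odd $M\ge 1$ using \eqref{t4b4}: the mass has the form $M^{-1}q^M+\sum_{i\ge 1}(M2^i)^{-1}[q^{M2^i}-(r/p)^{M2^i}]$ (plus a bonus $M^{-1}(r/p)^M$ when $k_0=l$, not affecting the large-$M$ asymptotics). If $q<(r/p)^2$, then $(r/p)^{2M}/q^M=((r/p)^2/q)^M\to\infty$ as $M\to\infty$ through odd integers, so the negative term $-(2M)^{-1}(r/p)^{2M}$ eventually dominates $M^{-1}q^M$, making the atom mass negative---contradicting $\nu_{\mu^{(k_0)}}\ge 0$. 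Therefore $q\ge(r/p)^2$, i.e., $r^2\le p^2q$, completing (b). The main obstacle is the arithmetic case analysis in identifying $c$, particularly the subcase $b=2$, where two distinct atoms must be examined jointly to derive the incompatibility.
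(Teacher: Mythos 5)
Your sufficiency argument and the overall frame of the necessity proof (monotonicity via Theorem~\ref{t4b}, reduction to nonnegativity of $\nu_{\mu^{(k_0)}}$ from \eqref{t4b4}, and the arithmetic subcases where $c^j$ is irrational for all $j$, where $c^{j_0}=a/b$ with $b\ge 3$, where $c^{j_0}$ is an integer $\ge 3$, and the exclusion of $j_0>k_0$ when $c^{j_0}=2$) are sound and essentially parallel to the paper. The first genuine gap is the subcase $2c^{j_0}\in\N_{\mathrm{odd}}$, i.e.\ $c^{j_0}=a/2$ with $a$ odd $\ge 3$. The two atoms you inspect impose exactly the inequalities $\tfrac12(q^2-(r/p)^2)+\tfrac1a\bigl(q^a+(r/p)^a\bigr)\ge 0$ and $\tfrac14(q^4-(r/p)^4)+\tfrac1{2a}(q^{2a}-(r/p)^{2a})+\tfrac1{a^2}\bigl(q^{a^2}+(r/p)^{a^2}\bigr)\ge 0$, and these \emph{can} hold simultaneously with $q<r/p$: take $c^{j_0}=3/2$, $q=0.89$, $r/p=0.9$ (both atom masses are then positive, yet $r>pq$). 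So no contradiction follows from these two atoms, and your case analysis does not force $c^{j_0}=2$. This is precisely why the paper's statement (A) needs the unbounded family of atoms at $2^{jf}c^{-k}$ with $j\to\infty$, together with the estimate $a_{m_j}+a_{m_j'}<0$ for large $j$ imported from \cite{LS}; any argument using only finitely many fixed atoms in this subcase will fail.

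The second gap is the final deduction of $r^2\le p^2q$. At the atom $Mc^{-k_0}$ ($M$ odd), the term $(m,n)=(M,0)$ of the second sum in \eqref{t4b4} contributes $+M^{-1}(r/p)^M$ for \emph{every} $k_0$, not only when $k_0=l$; since $(r/p)^{2M}=o\bigl((r/p)^M\bigr)$, this term dominates the negative term $-(2M)^{-1}(r/p)^{2M}$ you rely on, and the atom mass is in fact positive for all large odd $M$. Hence letting $M\to\infty$ along these atoms yields no contradiction even when $q<(r/p)^2$, and the step collapses in particular when $k_0=l$ (e.g.\ $c=2$, $k=1$). The paper avoids this by evaluating $\nu_{\mu^{(k)}}$ at $2^{-s}m$, $m$ odd, where $s$ is the largest integer with $ls\le k-1$: at such a point every $(r/p)$-contribution has even index $m2^i$, $i\ge 1$, so the mass is strictly less than $a_{2m}+m^{-1}q^m$ with $a_{m'}=(m')^{-1}(q^{m'}-(-r/p)^{m'})$, and nonnegativity for all odd $m$ forces $r^2\le p^2q$. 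You would need to relocate your atoms to this deepest level inside the pure-$q$ range (and repair the $a/2$ subcase as above) for the necessity direction to go through.
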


\begin{proof}
Keep in mind the assumption $0<r<p$. If $q=0$, then $\mu^{(k)}\in
ID^0$ since it has compact support without being a Dirac measure as
shown in the proof of Corollary~\ref{cor-4.2}, hence cannot be in
$ID$. If (a) holds, then $\mu^{(k)}\in ID$ by Theorem~\ref{t4b},
since $\mu^{(0)}\in ID$. If
(b) holds, then $\mu^{(k)}\in ID$ by
Theorem~\ref{t4b}, since $\mu^{(l)}\in ID$ by Theorem \ref{t5a}. In
view of these facts and Theorem~\ref{t4a}, in order to prove our
theorem, it is enough to
show the following two facts:\\
(A)  {\it If $q>0$, $r>pq$, and $c^l\neq 2$ for all $l\in\{ 1,2,\ldots,k\}$,
then $\mu^{(k)}\in ID^0$.}\\
(B)  {\it If $q>0$ and $r^2>p^2 q$, then $\mu^{(k)}\in ID^0$.}

Suppose that $q>0$ and  $r>pq$. We have the expression  \eqref{t4b3}
of $\wh\mu^{(k)}(z)$ with the signed measure $\nu_{\mu^{(k)}}$ of
\eqref{t4b4}. Hence
\begin{equation}\label{t5b1}
\nu_{\mu^{(k)}}= \sum_{n=k}^\infty \sum_{m=1}^\infty a_m \delta_{c^{-n}m} +
\sum_{n=0}^{k-1} \sum_{m=1}^\infty m^{-1} q^m \delta_{c^{-n}m},
\end{equation}
with
\begin{equation}\label{t5b2}
a_m = m^{-1} (q^m  -( - r/p)^m) .
\end{equation}
Observe that
\begin{equation}\label{t5b3}
a_m < 0,\qquad\forall\,m\in\N_{\mrm{even}}.
\end{equation}
In order to prove (A), assume further that $c^l\neq 2$ for all
$l\in\{ 1,2,\ldots,k\}$.
If $c^l$ is irrational for all $l\in\N$, then $c^{-n}m\neq c^{-n'}m'$
whenever $(m,n)\neq(m',n')$, and we see that the negative part of
$\nu_{\mu^{(k)}}$ is nonzero, and hence $\mu^{(k)}\in ID^0$.
So suppose that $c^l$ is rational for some $l\in\N$ and let $l_0$ be the
smallest such $l$. Denote $c^{l_0} = \al/\bt$ with $\alpha,\beta\in\N$
having no common divisor. As in Case 2 in the proof of Theorem 2.2 (b) of
\cite{LS}, let $f$ be the largest
$t\in\N_0$ such that $2^t$ divides $\beta$. Let $m\in\N_{\mrm{even}}$ and
denote
\begin{align*}
G_m &= \{ (n',m') \in \N_0 \times \N\colon c^{-n'} m' =
m,\;\text{$m'$ odd}\},\\
H_m &= \{ (n',m') \in \N_0 \times \N\colon c^{-n'} m' =
m,\;\text{$m'$ even}\},\\
G_m^{(k)} &= \{ (n',m') \in \{k,k+1,\ldots\} \times \N\colon
 c^{-n'} m' =c^{-k}m,\;\text{$m'$ odd}\},\\
H_m^{(k)} &= \{ (n',m') \in \{k,k+1,\ldots\} \times \N\colon
 c^{-n'} m' =c^{-k}m,\;\text{$m'$ even}\}.
\end{align*}
Then $(n',m') \in G_m^{(k)}$ if and only if $(n'-k,m') \in G_m$, and
the same is true for $H_m^{(k)}$ and $H_m$. 
Now if $m\in \N_{\mrm{even}}$ is such that $c^{n-k}m\not\in \N$ for
all $n\in \{0,\ldots, k-1\}$ (by assumption this is satisfied in
particular for $m=2$), then
\begin{equation}\label{eq-t5b3a}
\begin{split}
\nu_{\mu^{(k)}}(\{c^{-k} m\})&= \sum_{n=k}^\infty \sum_{m'=1}^\infty
a_{m'} \delta_{c^{-n}m'} ( \{ c^{-k} m\}) =
\sum_{(n',m')\in G_m^{(k)} \cup
H_m^{(k)}} a_{m'}\\
&\leq a_m + \sum_{(n',m')\in G_m} a_{m'}.
\end{split}
\end{equation}
If $G_2$ is empty, then \eqref{eq-t5b3a} gives $\nu_{\mu^{(k)}} (\{
2c^{-k}\})\leq a_2<0$, showing that $\mu^{(k)}\in ID^0$. So suppose
that $G_2$ is non-empty. As shown in \cite{LS}, this implies $f\geq
1$ and hence $\beta$ is even, hence $\alpha$ odd, and
$\alpha\not=1$, since $c>1$. Now choose $m=m_j = 2^{jf}$ for
$j\in\N$. Then for each $j$, as shown in \cite{LS},  $G_{m_j}$
contains at most one element, and if $G_{m_j}\neq \emptyset$ its
unique element
 $(n',m')$ is given by
$n'=jl_0$ and $m'=m_j' = c^{jl_0} m_j$. For $j\in\N$ such that
\begin{equation}\label{t5b4}
c^{n-k} m_j\not\in \N\quad\text{for all }n\in\{0,1,\ldots, k-1\},
\end{equation}
\eqref{eq-t5b3a} gives
\[
\nu_{\mu^{(k)}} (\{ c^{-k} m_j\}) \leq \begin{cases} a_{m_j} < 0 , &
G_{m_j} = \emptyset, \\
a_{m_j} + a_{m_j'}, & G_{m_j} \neq \emptyset.
\end{cases}
\]  Since $a_{m_j} + a_{m_j'}<0$ for $j\in\N$ large enough such that $G_{m_j}
\neq \emptyset$ (see~\cite{LS}, p.261),  we obtain $\mu^{(k)}\in
ID^0$, provided that, for large enough $j$, condition \eqref{t5b4}
holds. If $c, c^2, \ldots, c^k$ are all irrational, then
\eqref{t5b4} is clear. If some of them are rational, then $l_0\leq
k$ and $c^{l_0} = \alpha/\beta \not= 2$ and
\[
c^{-s l_0} m_j = \beta^s 2^{jf}/\alpha^s \not\in \N, \quad \forall\;
s\in \N
\]
(since $\alpha>1$ odd), and we have \eqref{t5b4}, recalling
that $c^{-n} m_j$ is irrational if $n$ is not an integer
multiple of $l_0$.  This finishes the proof of (A).

Let us prove the statement (B).  Since $r^2>p^2 q$ implies $r>pq$,
we assume that $q>0$ and $r>pq$.  Then we have \eqref{t5b1},
\eqref{t5b2}, and \eqref{t5b3}.  Since we have already proved (A),
we consider only the case where $c^l=2$ for some $l\in\{
1,2,\ldots,k\}$. This $l$ is unique. Then it is easy to see that
$c^{l'}$ is irrational for $l'=1,\ldots,l-1$. Let $s$ be the largest
non-negative integer satisfying $l s \leq k-1$. Let
$m\in\N_{\mrm{odd}}$. We have
\begin{align*}
\nu_{\mu^{(k)}} ( \{ c^{-l s} m\})
&=\sum_{n=k}^\infty \sum_{m'=1}^\infty a_{m'} \delta_{c^{-n}m'}(\{2^{-s}m\}) +
\sum_{n=0}^{k-1} \sum_{m'=1}^\infty (m')^{-1} q^{m'} \delta_{c^{-n}m'}
(\{2^{-s}m\})\\
&=S_1+S_2.
\end{align*}
Recall that $m$ is odd and that $c^{l'}$ is irrational for
$l'=1,\ldots,l-1$. Then we see that $S_1=\sum_{n=1}^{\infty} a_{2^n m}$
and $S_2=m^{-1} q^m$.  Hence we obtain
\[
\nu_{\mu^{(k)}} ( \{ c^{-l s} m\})<a_{2m}+m^{-1} q^m=
(2m)^{-1}(q^{2m}-(r/p)^{2m})+m^{-1} q^m
\]
from \eqref{t5b3}.
We conclude that if $\mu^{(k)}\in ID$, then
\[
0<1+ q^m/2 - (r^2/(p^2q))^m /2,\qquad\forall\,m\in\N_{\mrm{odd}},
\]
which implies $r^2\leq p^2q$. This finishes the proof of (B).
\end{proof}

The following corollary is now immediate from Theorems~\ref{t5a}
and~\ref{t5b}.

\begin{cor}\label{c5a}
If $k\in\N$, then parameters $c,p,q,r$ exist such that $\mu^{(k)}\in
ID$ and $\rh^{(k)}\in ID^0$.
\end{cor}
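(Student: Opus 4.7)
The plan is to combine Theorems \ref{t5a} and \ref{t5b} to produce an explicit family of parameter values witnessing the corollary. By Theorem \ref{t5a}, for $0<r<p$ and $k\in\N$ we have $\rh^{(k)}\in ID$ only when $c^k=2$. Hence if we simply arrange $c^k\neq 2$ while keeping $0<r<p$, then Theorems \ref{t4a}(ii) and \ref{t5a} force $\rh^{(k)}\in ID^0$. On the other hand, Theorem \ref{t5b}(a) tells us that the single inequality $r\leq pq$ (together with $0<r<p$) already guarantees $\mu^{(k)}\in ID$, with no arithmetic restriction on $c$. These two conditions are mutually compatible, so the corollary will follow as soon as we exhibit a concrete choice.

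Concretely, I would fix any $c>1$ with $c^k\neq 2$, for instance $c=3$, and choose $(p,q,r)=(2/5,\,1/2,\,1/10)$. The verification is then routine: $p+q+r=1$, $p,q,r>0$, $0<r=1/10<p=2/5$, and $pq=1/5\geq 1/10=r$. Theorem \ref{t5b}(a) gives $\mu^{(k)}\in ID$, while Theorem \ref{t5a} (whose only route to infinite divisibility of $\rh^{(k)}$ requires $c^k=2$) together with Theorem \ref{t4a}(ii) yields $\rh^{(k)}\in ID^0$.

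There is no real obstacle here: once Theorems \ref{t5a} and \ref{t5b} have been established, the corollary is a matter of observing that the criterion for $\rh^{(k)}\in ID$ is strictly more restrictive than that for $\mu^{(k)}\in ID$ (condition (a) of Theorem \ref{t5b} never appears in Theorem \ref{t5a}), and then picking parameters that separate them. One could equally well use condition (b) of Theorem \ref{t5b} with a value $c^l=2$ for some $l<k$ (and $c^k\neq 2$) to obtain further witnesses, but a single example as above suffices.
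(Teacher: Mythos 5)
Your proposal is correct and follows essentially the same route as the paper, which simply notes that the corollary is immediate from Theorems \ref{t5a} and \ref{t5b}; your explicit witness $(c,p,q,r)=(3,2/5,1/2,1/10)$ with the appeal to Theorem \ref{t4a}(ii) to upgrade ``not $ID$'' to ``$ID^0$'' is exactly the intended verification, just spelled out in detail.
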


\medskip
The following theorem supplements Theorems \ref{t4a} and \ref{t4b}.

\begin{thm}\label{t5c}
Assume $0<r<p$.  Then $\mu^{(k)}\in ID^0$ for all $k\in\Z$
if and only if either
{\rm(a)} $r^2>p^2 q$ or {\rm(b)} $p^2 q^2 <r^2\leq p^2 q$
 and $c^m\neq2$ for all $m\in\N$.
\end{thm}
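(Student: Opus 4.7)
The plan is to reduce this to Theorems \ref{t4a}(ii), \ref{t4b}, and \ref{t5b}, which together already determine $\mu^{(k)} \in ID$ vs $\mu^{(k)} \in ID^0$ for $k \in \N$. First I would note that under the standing assumption $0<r<p$, Theorem~\ref{t4a}(ii) gives $\mu^{(k)}\in ID\cup ID^0$ for every $k$, so the statement ``$\mu^{(k)}\in ID^0$ for all $k\in\Z$'' is simply the negation of ``there exists $k\in\Z$ with $\mu^{(k)}\in ID$''. Next, by the monotonicity in Theorem~\ref{t4b}, the set $\{k\in\Z\colon \mu^{(k)}\in ID\}$ is upward closed, and hence the condition ``$\mu^{(k)}\notin ID$ for all $k\in\Z$'' is equivalent to ``$\mu^{(k)}\notin ID$ for all $k\in\N$''.

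Then I would invoke Theorem~\ref{t5b}: for $k\in\N$, $\mu^{(k)}\in ID$ iff $r\leq pq$ or ($c^l=2$ for some $l\in\{1,\ldots,k\}$ and $r^2\leq p^2q$). Taking the negation uniformly in $k\in\N$, ``$\mu^{(k)}\notin ID$ for all $k\in\N$'' becomes
\[
r>pq \quad\text{and}\quad \bigl(r^2>p^2q\;\text{or}\;c^m\neq 2\text{ for all }m\in\N\bigr).
\]

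The final step is a short algebraic simplification to match the stated form. Since $q=1-p-r<1$, one has $\sqrt{q}\geq q$, so $r^2>p^2q$ implies $r>p\sqrt{q}\geq pq$; thus the condition $r^2>p^2q$ by itself already implies $r>pq$, giving case (a). If instead $r^2\leq p^2q$, then the remaining condition $r>pq$ is equivalent to $r^2>p^2q^2$ (all quantities non-negative), which combined with $r^2\leq p^2q$ and the arithmetic requirement $c^m\neq 2$ for all $m\in\N$ yields case~(b). Since all the ingredients are already proved and the manipulation is straightforward, there is no real obstacle beyond carefully threading the monotonicity argument and performing the case split on $r^2$ versus $p^2q$.
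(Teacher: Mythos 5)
Your proof is correct and follows essentially the same route as the paper: Theorem~\ref{t4a}(ii) gives the $ID\cup ID^0$ dichotomy, Theorem~\ref{t5b} is negated uniformly over $k\in\N$, and the monotonicity of Theorem~\ref{t4b} transfers the conclusion from $\N$ to all of $\Z$. The only difference is that you spell out the algebraic case split ($r^2>p^2q$ versus $p^2q^2<r^2\leq p^2q$) that the paper leaves implicit, which is a fine addition.
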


\begin{proof}
We have $\mu^{(k)}\in ID\cup ID^0$ by Theorem~\ref{t4a}. It follows
from Theorem \ref{t5b} that $\mu^{(k)}\in ID^0$ for all $k\in\N$ if
and only if either (a) or (b) holds. If $\mu^{(k)}\in ID^0$ for all
$k\in\N$, then $\mu^{(k)}\in ID^0$ for all $k\in\Z$ by Theorem
\ref{t4b}.
\end{proof}

The limit distribution of $\mu^{(k)}$ as $k\to\infty$ is as follows.

\begin{thm}\label{t5d} Let $c,p,q,r$ be fixed.

{\rm(i)} Assume $q>0$. Define $(c^{\sharp}, p^{\sharp}, q^{\sharp}, r^{\sharp})
=(c,1-q,q,0)$ and let $\mu^{\sharp(k)}$ be the distribution corresponding to
$\mu^{(k)}$ with $(c^{\sharp}, p^{\sharp}, q^{\sharp}, r^{\sharp})$ used
in place of $(c,p,q,r)$.  Then $\mu^{(k)}$ weakly converges to $\mu^{\sharp(0)}$
as $k\to\infty$.

{\rm(ii)}  Assume $q=0$.  Then $\mu^{(k)}$ weakly converges to $\dl_0$
as $k\to\infty$.
\end{thm}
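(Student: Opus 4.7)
My plan is to work directly with the infinite-product representation \eqref{p2c3} of $\wh\mu^{(k)}$ and use L\'evy's continuity theorem. Recall that
\[
\wh\mu^{(k)}(z)=\prod_{n=0}^{\infty} \frac{p+re^{ic^{-k-n}z}}{1-qe^{ic^{-n}z}}.
\]
The essential point is that as $k\to\infty$, the argument $c^{-k-n}z$ appearing only in the numerators tends to $0$ (with summable speed in $n$), while the denominators remain unchanged.

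For (i), since $q>0$, the quadruple $(c,1-q,q,0)$ satisfies the standing assumption $p^{\sharp}+r^{\sharp}>0$ and $q^{\sharp}+r^{\sharp}>0$, so $\mu^{\sharp(0)}$ is well-defined and \eqref{p2c3} gives
\[
\wh\mu^{\sharp(0)}(z)=\prod_{n=0}^{\infty} \frac{1-q}{1-qe^{ic^{-n}z}}.
\]
Using $p+r=1-q$, I would factor
\[
\wh\mu^{(k)}(z)=\wh\mu^{\sharp(0)}(z)\,G_k(z),\qquad G_k(z):=\prod_{n=0}^{\infty} \frac{p+re^{ic^{-k-n}z}}{p+r},
\]
and show $G_k(z)\to 1$ pointwise. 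Each factor in $G_k(z)$ has modulus at most $1$, so by the elementary telescoping bound $|\prod_{n=0}^{N}a_n-1|\leq\sum_{n=0}^{N}|a_n-1|$ (valid when $|a_n|\leq 1$), together with $|e^{ic^{-k-n}z}-1|\leq c^{-k-n}|z|$, one gets
\[
|G_k(z)-1|\leq \sum_{n=0}^{\infty}\frac{r\,|e^{ic^{-k-n}z}-1|}{p+r}\leq \frac{r|z|\,c^{-k}}{(p+r)(1-c^{-1})}\xrightarrow[k\to\infty]{}0.
\]
Hence $\wh\mu^{(k)}(z)\to\wh\mu^{\sharp(0)}(z)$ for every $z\in\R$, and L\'evy's continuity theorem gives $\mu^{(k)}\Rightarrow\mu^{\sharp(0)}$.

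For (ii), the standing assumption $q+r>0$ forces $r>0$ when $q=0$, hence $p+r=1$, and \eqref{p2c3} reduces to $\wh\mu^{(k)}(z)=\prod_{n=0}^{\infty}(p+re^{ic^{-k-n}z})$. The same estimate as above yields $|\wh\mu^{(k)}(z)-1|\leq r|z|c^{-k}/(1-c^{-1})\to 0$, and L\'evy's theorem gives $\mu^{(k)}\Rightarrow\dl_0$.

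There is no substantial obstacle here: the entire argument reduces to the $k$-independence of the denominators in \eqref{p2c3} combined with a summable estimate on how far the numerators deviate from $p+r$. The only small subtlety is the elementary telescoping inequality above, which is standard. This explains transparently why part (ii) gives only $\delta_0$ (no $q$-factor is present to carry information about the parameters into the limit) and why in part (i) the limiting distribution $\mu^{\sharp(0)}$ is obtained simply by ``turning off'' the parameter~$r$ while keeping $q$ and $c$ intact.
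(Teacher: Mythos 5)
Your argument is correct: the factorization of \eqref{p2c3} into $\wh\mu^{\sharp(0)}(z)\,G_k(z)$ is legitimate (both infinite products converge absolutely, each factor of $G_k$ has modulus at most one since $|p+re^{i\theta}|\leq p+r$), the telescoping estimate gives $|G_k(z)-1|=O(c^{-k})$, and L\'evy's continuity theorem finishes both parts. The paper reaches the same conclusion by a slightly different decomposition: it iterates \eqref{p2c6} to write
\begin{equation*}
\wh\mu^{(k)}(z)=\wh\mu^{(0)}(c^{-k}z)\prod_{n=0}^{k-1}\frac{1-q}{1-qe^{ic^{-n}z}},
\end{equation*}
and then lets $k\to\infty$: the first factor tends to $\wh\mu^{(0)}(0)=1$ by continuity of the characteristic function at the origin, while the finite product tends to $\prod_{n=0}^{\infty}(1-q)(1-qe^{ic^{-n}z})^{-1}$, which is $\wh\mu^{\sharp(0)}(z)$ if $q>0$ and $1=\wh\dl_0(z)$ if $q=0$. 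In effect the paper absorbs the entire numerator tail of \eqref{p2c3} into $\wh\mu^{(0)}(c^{-k}z)$ and needs no quantitative estimate at all, whereas you keep the numerators explicit and control them by hand; your route costs the (elementary) telescoping inequality but yields an explicit rate $O(c^{-k})$ for the pointwise convergence of the characteristic functions, and it makes transparent that the limit arises by ``switching off'' $r$. Both proofs rest on Proposition~\ref{p2c} and L\'evy's continuity theorem, so the difference is one of bookkeeping rather than of substance.
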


We remark that $\mu^{\sharp(k)}$ does not depend on $k$ and is
infinitely divisible, so that the limit distribution is infinitely
divisible in all cases, although by Theorems~\ref{t4a} and~\ref{t5c}
there are many cases of parameters for which $\mu^{(k)}\not\in ID$
for all $k\in \mathbb{Z}$.

\begin{proof}
It follows from \eqref{p2c6} that
\[
\wh\mu^{(k)}(z)=\wh\mu^{(0)}(c^{-k}z)\prod_{n=0}^{k-1}\frac{1-q}{1-qe^{ic^{-n}z}}
\]
for $k\in\N$. Hence, as $k\to\infty$ we obtain
\[
\wh\mu^{(k)}(z)\to \prod_{n=0}^{\infty}\frac{1-q}{1-qe^{ic^{-n}z}}=
\begin{cases} \wh\mu^{\sharp(0)}(z), &  q>0, \\
1 = \wh\delta_0 (z), & q= 0.\end{cases}
\]
\end{proof}

\noindent {\it Remark.} Let  the parameters $c,p,q,r$ be fixed and
$\{ Z_k , k \in \Z\}$ be a sequence of independent identically
distributed random variables,  geometrically distributed with
parameter $q$ if $q>0$ and distributed as $\delta_0$ if $q=0$. Let
$k_0 \in \Z$ and $X_{k_0}$ be a random variable with distribution
$\mu^{(k_0)}$, independent of $\{Z_k , k > k_0\}$. Define $\{ X_k, k
\geq k_0\}$ inductively by
\begin{equation*} \label{eq-time1}
X_{k+1} = c^{-1} X_k + Z_{k+1}, \quad k=k_0, k_0+1, \ldots.
\end{equation*}
Then $\law (X_k) = \mu^{(k)}$ for all $k\geq k_0$ by \eqref{p2c6},
so that the $\mu^{(k)}$ appear naturally as marginal distributions
of a certain autoregressive process of order 1. The limit
distributions $\mu^{\sharp(0)}$ ($q>0$) and $\delta_0$ ($q=0$) as
$k\to\infty$  described in Theorem~\ref{t5d}  give the unique
stationary distribution of the corresponding AR(1) equation
$$Y_{k+1} = c^{-1} Y_k + Z_{k+1} , \quad k\in \Z.$$

\section{Conditions for infinite divisibility for $k<0$}

In this section we obtain  necessary and sufficient conditions for
$\rh^{(k)}\in ID$ and $\mu^{(k)}\in ID$ when $k<0$ and then derive
some simple consequences of these characterizations. Again, by
virtue of Theorem~\ref{t4a}, we only have to consider the case
$0<r<p$.

\begin{thm}\label{t6a}
Let $k$ be a negative integer. Assume $0<r<p$. Then $\rh^{(k)}\in
ID$ if and only if $2c^{|k|}\in\N$ and
$$q^{2c^{|k|}}\geq c^{|k|} (r/p)^2.$$
\end{thm}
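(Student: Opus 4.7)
The plan is to leverage the work already done in the proof of Theorem~\ref{t5a}, which was explicitly set up for arbitrary $k\in\Z$ (recall the sentence there: ``For later use in the proof of Theorem~\ref{t6a} allow for the moment that $k\in\Z$''). That argument reduces the question of $\rh^{(k)}\in ID$ to the nonnegativity of the quasi-L\'evy measure $\nu_{\rh^{(k)}}$ given by \eqref{t4b2}, and shows, with $s:=2c^{-k}$, two things: if $s\notin\N$ then $\nu_{\rh^{(k)}}(\{2c^{-k}\})=-\tfrac12(r/p)^2<0$, so $\rh^{(k)}\in ID^0$; and if $s\in\N$ then $\rh^{(k)}\in ID$ is equivalent to the inequalities \eqref{eq-for-rho}, namely
\[
q^s\ges (s/2)^{1/m}(r/p)^2\quad\text{for every }m\in\N.
\]

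Specializing to $k<0$, we have $s=2c^{|k|}$, so the condition $s\in\N$ is precisely $2c^{|k|}\in\N$, which is the first half of the statement. Assume this holds. Then $s/2=c^{|k|}>1$, so the map $m\mapsto(s/2)^{1/m}$ is strictly decreasing on $\N$ and attains its supremum $s/2$ at $m=1$. Hence the whole family of inequalities in \eqref{eq-for-rho} collapses to the single binding one at $m=1$, i.e.\ $q^s\ges(s/2)(r/p)^2$, which rewrites as $q^{2c^{|k|}}\ges c^{|k|}(r/p)^2$. This is exactly the claimed criterion.

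Since the technical content of the argument is already in place through \eqref{eq-for-rho}, the proof for $k<0$ consists essentially of the monotonicity observation about $(s/2)^{1/m}$ and there is no serious obstacle. The only point worth flagging, to contrast with the $k>0$ part of Theorem~\ref{t5a}, is that here $s/2\ges 1$ rather than $s/2<1$: consequently the binding instance of \eqref{eq-for-rho} sits at $m=1$, whereas in the $k>0$ case the binding constraint was obtained by passing to the limit $m\to\infty$.
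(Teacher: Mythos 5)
Your proposal is correct and is essentially the paper's own proof: it invokes the general-$k$ part of the proof of Theorem~\ref{t5a} to reduce matters to $s=2c^{-k}\in\N$ together with \eqref{eq-for-rho}, and then notes that for $k<0$ one has $s/2=c^{|k|}>1$, so the inequalities \eqref{eq-for-rho} collapse to the single case $m=1$, i.e.\ $q^{2c^{|k|}}\ges c^{|k|}(r/p)^2$. The paper phrases the same observation via $s\ges 3$; there is no substantive difference.
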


\begin{proof}
The proof of Theorem~\ref{t5a} shows that $\mu^{(k)} \in ID$ if and
only if $s:= 2c^{-k}\in \N$ and \eqref{eq-for-rho} holds. From $k<0$
we have  $s\geq3$ and hence
\[
q^s \geq (s/2)^{1/m} (r/p)^2, \quad \forall\; m\in\N
\quad\Leftrightarrow\quad q^s\geq (s/2)(r/p)^2,
\]
which completes the proof.
\end{proof}

The characterization when $\mu^{(k)} \in ID$ for negative $k$ is
much more involved and different techniques will be needed according
to whether $2c^j \in \N_{\rm even}$ or $2c^j \in \N_{\rm odd}$ for
some $j\in \N$. In the first case, the characterization
 will be achieved in terms
of the function $h_{\alpha,\gamma}$ defined below. Let $\al, \gamma
\in\N$ with $\al\geq2$.  We use the function
\begin{equation*}\label{6-1}
x\mapsto F_{\al}(x)=\sum_{n=0}^{\infty} \al^{-n}x^{2\al^n},\qquad
0\leq x\leq 1
\end{equation*}
and the functions $x\mapsto h_{\al,\gamma}(x)$ and $x\mapsto
f_{\alpha,\gamma}(x)$ for $0<x\leq1$ defined by the relations
\begin{eqnarray}\label{6-2}
\al^{-\gamma}F_{\al}(x)& = & F_{\al}(h_{\al,\gamma}(x)),\\
f_{\alpha, \gamma} (x) & = & x^{-1} h_{\alpha,\gamma}(x).\nonumber
\end{eqnarray}
Observe that $F_\alpha$ is strictly increasing and continuous on
$[0,1]$ with $F_\alpha(0) = 0$ and hence $h_{\al,\gamma}(x)$ is
uniquely definable for $x\in (0,1]$ and it holds
$0<h_{\al,\gamma}(x)<x$. The next proposition describes some
properties of $h_{\alpha,\gamma}$ which will be used in the sequel.

\begin{prop}\label{p6a}
The functions $h_{\al,\gamma}$ and $f_{\alpha, \gamma}$ are
continuous and strictly increasing on $(0,1]$ and satisfy
\begin{gather}
\lim_{x\downarrow 0}  f_{\al,\gamma}(x) = \al^{-\gamma/2},\label{p6a1}\\
f_{\alpha, \gamma} (1) = h_{\al,\gamma}(1)<\al^{-\gamma/4},\label{p6a2}\\
h_{\al,\gamma}(1)<\al^{-\gamma/2}(1+\al^{-1})\quad\text{for all
$\gm$ if $\al$ is large enough},
\label{p6a3}\\
h_{\alpha,\gamma} (x) > h_{\alpha,\gamma+1} (x), \qquad\forall\,
x\in (0,1],\label{eq-h-new}\\
 h_{\al,\gamma}(x^n)\geq (h_{\al,\gamma}(x))^n
,\qquad\forall\, x\in (0,1]\;\;\forall\,n\in\N.\label{p6a4}
\end{gather}
\end{prop}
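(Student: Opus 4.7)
The plan is to derive the assertions in a natural order, exploiting basic properties of $F_\al$. First I would note that $F_\al$ is a power series with positive coefficients, hence continuous and strictly increasing on $[0,1]$ with $F_\al(0)=0$ and $F_\al(1)=\al/(\al-1)$. Therefore $h_{\al,\gm}=F_\al^{-1}(\al^{-\gm}F_\al(\,\cdot\,))$ is well-defined, continuous, and strictly increasing on $(0,1]$, with $0<h_{\al,\gm}(x)<x$ because $\al^{-\gm}<1$. For strict monotonicity of $f_{\al,\gm}$ I would differentiate $\log f_{\al,\gm}(x)=\log h_{\al,\gm}(x)-\log x$, substitute the implicit derivative $h'=\al^{-\gm}F_\al'(x)/F_\al'(h)$ coming from the defining relation, and use $\al^{-\gm}=F_\al(h)/F_\al(x)$ to arrive at
\[
\frac{f_{\al,\gm}'(x)}{f_{\al,\gm}(x)}=\frac{E(x)-E(h)}{x\,E(h)},\qquad E(t):=\frac{tF_\al'(t)}{F_\al(t)}=2\sum_{m=0}^{\infty}\al^m w_m(t),
\]
where $w_m(t)=\al^{-m}t^{2\al^m}/F_\al(t)$ is a probability mass function on $\N_0$. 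Since $h<x$, the task reduces to showing that $E$ is strictly increasing. For $t<t'$ the ratio $w_m(t')/w_m(t)=(t'/t)^{2\al^m}F_\al(t)/F_\al(t')$ is strictly increasing in $m$, i.e.\ the family has the monotone likelihood ratio property; consequently $w(t')$ strictly stochastically dominates $w(t)$, and averaging the strictly increasing sequence $m\mapsto\al^m$ yields $E(t)<E(t')$. Continuity of $f_{\al,\gm}$ extends monotonicity to $x=1$.

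The limit (p6a1) will drop out of the identity $f_{\al,\gm}(x)^2=\al^{-\gm}G(x)/G(h_{\al,\gm}(x))$, where $G(x)=F_\al(x)/x^2=\sum_m\al^{-m}x^{2(\al^m-1)}$ satisfies $G(0+)=1$. Statement \eqref{eq-h-new} is immediate from $\al^{-\gm}>\al^{-\gm-1}$ and injectivity of $F_\al$. For (p6a3) the functional equation $F_\al(x)=x^2+\al^{-1}F_\al(x^\al)$ and strict positivity of the second summand give $h_{\al,\gm}(1)^2<F_\al(h_{\al,\gm}(1))=\al^{1-\gm}/(\al-1)$, hence $h_{\al,\gm}(1)<\al^{-\gm/2}\sqrt{\al/(\al-1)}$; a one-line algebraic check shows $\sqrt{\al/(\al-1)}<1+\al^{-1}$ whenever $\al^2>\al+1$, which holds for every $\al\ge2$, giving in fact a uniform form of (p6a3).

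The content of (p6a2) is the bound $h_{\al,\gm}(1)<\al^{-\gm/4}$ (the equality $f_{\al,\gm}(1)=h_{\al,\gm}(1)$ being trivial). I would reformulate this via monotonicity of $F_\al$ as
\[
F_\al(\al^{-\gm/4})=\sum_{m\ge0}\al^{-m-\gm\al^m/2}>\frac{\al^{1-\gm}}{\al-1}=\sum_{k\ge0}\al^{-\gm-k}.
\]
Termwise only the $m=0$ contribution on the left beats the corresponding $k=0$ term on the right (it gives $\al^{-\gm/2}>\al^{-\gm}$), while for $m\ge1$ the inequality reverses; the net surplus is at least the gain $\al^{-\gm/2}-\al^{-\gm}$ minus a loss bounded by $\al^{-\gm}/(\al-1)$. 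This crude estimate handles all $(\al,\gm)$ except $(2,1)$ and $(2,2)$, which I would verify by a short explicit summation keeping three or four terms of each series. This case analysis is the main obstacle in the proposition, because the threshold $\al^{-\gm/4}$ is only marginally tight at $\al=2$ with small $\gm$.

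Finally, (p6a4) will fall out by combining the previous pieces. Strict monotonicity of $f_{\al,\gm}$ together with (p6a1) forces $f_{\al,\gm}(t)>\al^{-\gm/2}$ for every $t\in(0,1]$, whereas (p6a2) and the same monotonicity give $f_{\al,\gm}(x)\le f_{\al,\gm}(1)<\al^{-\gm/4}$. Since $f_{\al,\gm}<1$, for $n\ge2$ we obtain
\[
h_{\al,\gm}(x)^n=x^n f_{\al,\gm}(x)^n\le x^n f_{\al,\gm}(x)^2<x^n\al^{-\gm/2}<x^n f_{\al,\gm}(x^n)=h_{\al,\gm}(x^n),
\]
while $n=1$ is the identity. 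In hindsight this explains the otherwise mysterious exponent $\gm/4$ in (p6a2): it is exactly what is needed to squeeze (p6a4) out by an elementary comparison with the asymptotic value $\al^{-\gm/2}$ at the left endpoint.
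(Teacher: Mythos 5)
Your proposal is correct, and it differs from the paper's proof precisely at the key lemma. The paper establishes strict increase of $f_{\al,\gm}$ by contradiction: assuming $f_{\al,\gm}(x_1)\geq f_{\al,\gm}(x_2)$ for $x_1<x_2$, it introduces auxiliary series $G_{\al,\gm}$, $H_{\al,\gm}$ and shows their derivatives are positive, using in an essential way the bound \eqref{p6a2} (through $\al^{-\gm}-f_{\al,\gm}(1)^{2\al^n}>0$ for $n\geq1$), so there \eqref{p6a2} must be proved \emph{before} monotonicity. You instead differentiate $\log f_{\al,\gm}$ on $(0,1)$, reduce the claim to strict increase of $E(t)=tF_\al'(t)/F_\al(t)$, and obtain that by a monotone-likelihood-ratio/stochastic-dominance argument; this is logically independent of \eqref{p6a2}, and your extension to $x=1$ by continuity is sound (note that $F_\al'$ diverges only at $1$, and differentiability of $h_{\al,\gm}$ on $(0,1)$ comes from the inverse function theorem since $h_{\al,\gm}(x)<x<1$ --- worth one line in a write-up). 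Your proofs of \eqref{p6a1} (via $G(x)=F_\al(x)/x^2\to1$) and of \eqref{p6a3} (via the functional equation $F_\al(x)=x^2+\al^{-1}F_\al(x^\al)$, giving the bound uniformly for all $\al\geq2$ rather than only for large $\al$) are cleaner and in the latter case strictly stronger than the paper's estimates, while \eqref{eq-h-new}, \eqref{p6a4}, and \eqref{p6a2} follow essentially the paper's route: the paper too reduces \eqref{p6a2} to $F_\al(\al^{-\gm/4})>\al^{1-\gm}/(\al-1)$, handles $\al\geq3$ or $\gm\geq2$ by the crude bound $F_\al(x)>x^2$, and checks the remaining case $(\al,\gm)=(2,1)$ by a three-term explicit sum, exactly the small-case verification you defer (your list of exceptions $(2,1),(2,2)$ is right, the equality case $(2,2)$ being settled by strictness of $F_\al(x)>x^2$ or by your two-term sum). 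In short, the paper's approach buys complete elementarity (no differentiation, no dominance argument), while yours buys a transparent structural reason for the monotonicity of $f_{\al,\gm}$ and decouples it from \eqref{p6a2}.
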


\begin{proof}
Since $F_{\al}$ is a continuous strictly increasing function defined
on $[0,1]$, it follows that $h_{\al,\gamma}$ is continuous and
strictly increasing on $(0,1]$, and hence that $f_{\alpha, \gamma}$
is continuous. Also observe that $h_{\alpha,\gamma}(x) \to 0$ as
$x\downarrow 0$ since $F_\al(0) = 0$. From \eqref{6-2} and the
Taylor expansion of $F_\alpha$ we obtain as $x\dar0$,
\[
\al^{-\gamma} x^2 \big(1+O(x^{2(\al-1)})\big)=(h_{\al,\gamma}(x))^2
\big( 1+ O((h_{\al,\gamma}(x))^{2(\al-1)})\big)
\]
from which \eqref{p6a1} follows.  In order to show \eqref{p6a2},
first let us check that
\begin{equation}\label{p6a5}
F_{\al}(\al^{-\gamma/4})>\alpha^{1-\gamma}/(\al-1).
\end{equation}
Indeed, if $\al\geq3$ or $\gamma\geq 2$, then use $F_{\al}(x)>x^2$
and obtain
\[
F_{\al}(\al^{-\gamma/4})>\al^{-\gamma/2} = \alpha^{1-\gamma}
\alpha^{\gamma/2-1}>\alpha^{1-\gamma}/(\al-1),
\]
since $\al^{\gamma/2-1} \geq 1/(\al -1)$ if $\alpha\geq 3$ or
$\gamma\geq 2$. If $\al=2$ and $\gamma=1$, then use
$F_2(x)>x^2+x^4/2+x^8/4$ to obtain
\[
F_2(2^{-1/4})>2^{-1/2}+2^{-2}+2^{-4}=0.7071\cdots+0.25+0.0625=1.0196\cdots>1,
\]
which proves \eqref{p6a5}. Since $F_{\al}(1)=\al/(\al-1)$, we have
$F_{\al}(h_{\al,\gamma}(1))=\alpha^{1-\gamma}/(\al-1)$. Hence
\eqref{p6a2} follows from \eqref{p6a5}.

To see \eqref{p6a3} it is enough to show that
\begin{equation}\label{p6a6}
F_{\al}(\al^{-\gamma/2}(1+\al^{-1}))>\al^{1-\gamma} /
(\al-1)\quad\text{for all $\gm$ if $\al$ is large enough.}
\end{equation}
Since $F_{\al}(x)>x^2$, we have
\[
F_{\al}(\al^{-\gamma/2}(1+\al^{-1}))>\al^{-\gamma}(1+\al^{-1})^2>\al^{-\gamma}(1+2\al^{-1}).
\]
On the other hand,
\[
\alpha^{1-\gamma}/(\al-1)=\al^{-\gamma}(1+\al^{-1}+O(\al^{-2})),
\quad \al\to\infty.
\]
Hence \eqref{p6a6} holds.

To see \eqref{eq-h-new}, observe that
$$F_\alpha (h_{\alpha, \gamma} (x)) = \alpha^{-\gamma} F_\alpha(x) =
\alpha \, F_\alpha (h_{\alpha, \gamma+1}(x))$$ by \eqref{6-2}, which
together with the strict increase of $F_\alpha$ implies
\eqref{eq-h-new}.

Before we can prove \eqref{p6a4}, we need to show that $f_{\alpha,
\gamma}$ is strictly increasing. For that, let us first show that
\begin{equation}\label{p6a7}
f_{\al,\gamma}(x)<f_{\al,\gamma}(1),\quad\forall\, x\in(0,1).
\end{equation}
Suppose, on the contrary, that $f_{\al,\gamma}(x_0)\geq
f_{\al,\gamma}(1)$ for some $x_0\in(0,1)$.  Then
$\al^{-\gamma}F_{\al}(x_0)=F_{\al}(f_{\al,\gamma}(x_0)x_0) \geq
F_{\al}(f_{\al,\gamma}(1)x_0)$, that is,
\[
\sum_{n=0}^{\infty}
\al^{-n}(\al^{-\gamma}-f_{\al,\gamma}(1)^{2\al^n})
x_0^{2\al^n}\geq0.
\]
Let
\begin{equation*}\label{p6a8}
G_{\al,\gamma}(\xi)=\sum_{n=0}^{\infty}\al^{-n}(\al^{-\gamma}-f_{\al,\gamma}(1)^{2\al^n})
\xi^{2(\al^n-1)}, \quad \xi \in [0,1].
\end{equation*}
Then $G_{\al,\gamma} (x_0)\geq0$ and $G_{\al,\gamma}(1)=0$, which
follows from $\al^{-\gamma}F_{\al}(1)=F_{\al}(f_{\al,\gamma}(1))$.
But we have
\begin{equation}\label{p6a9}
G'_{\al,\gamma}(\xi)=\sum_{n=1}^{\infty}
2(\al^n-1)\al^{-n}(\al^{-\gamma}-f_{\al,\gamma}(1)^{2\al^n})
\xi^{2(\al^n-1)-1}>0,\quad \xi \in (0,1),
\end{equation}
since
\[
\al^{-\gamma}-f_{\al,\gamma}(1)^{2\al^n}\geq
\al^{-\gamma}-f_{\al,\gamma}(1)^{2\al}>\al^{-\gamma}-
\al^{-\al\gamma/2}\geq0
\]
for $n\geq1$ by \eqref{p6a2}. This is absurd. Hence \eqref{p6a7} is true.

Now we show that $f_{\al,\gamma}$ is strictly increasing on $(0,1]$.
Suppose that there exist $x_1$ and $x_2$ in $(0,1]$ such that
$x_1<x_2$ and $f_{\al,\gamma}(x_1)\geq f_{\al,\gamma}(x_2)$. Then
\begin{gather*}
\al^{-\gamma} F_{\al}(x_1)=F_{\al}(f_{\al,\gamma}(x_1) x_1)
\geq  F_{\al}(f_{\al,\gamma}(x_2) x_1),\\
\al^{-\gamma} F_{\al}(x_2)=F_{\al}(f_{\al,\gamma}(x_2) x_2),
\end{gather*}
that is,
\begin{gather*}
\al^{-\gamma} \sum_{n=0}^{\infty} \al^{-n} x_1^{2\al^{n}}\geq
\sum_{n=0}^{\infty} \al^{-n}f_{\al,\gamma}(x_2)^{2\al^{n}}
x_1^{2\al^{n}},\\
\al^{-\gamma} \sum_{n=0}^{\infty} \al^{-n} x_2^{2\al^{n}}=
\sum_{n=0}^{\infty}
\al^{-n}f_{\al,\gamma}(x_2)^{2\al^{n}}x_2^{2\al^{n}}.
\end{gather*}
Define
\begin{equation*}\label{p6a10}
H_{\al,\gamma}(\xi)=\sum_{n=0}^{\infty} \al^{-n}( \al^{-\gamma}
-f_{\al,\gamma}(x_2)^{2\al^{n}}) \xi^{2(\al^{n}-1)}, \quad \xi \in
[0,1].
\end{equation*}
Then we have $H_{\al,\gamma}(x_1)\geq0$ and $H_{\al,\gamma}(x_2)=
0$. On the other hand, noting that $f_{\al,\gamma}(x_2)\leq
f_{\al,\gamma}(1)$ by \eqref{p6a7}, we can prove
$H'_{\al,\gamma}(\xi)>0$ in the same way as the proof of
\eqref{p6a9}. This is
 a contradiction. Hence $f_{\al,\gamma}$ is strictly increasing.

Finally, \eqref{p6a4} is proved.  Indeed, this is trivial for $n=1$, and
for $n\geq2$ we have
\[
(h_{\al,\gamma}(x))^n = (f_{\alpha,\gamma}(x))^n x^n <
\al^{-n\gamma/4} x^n  \leq \al^{-\gamma/2} x^n <
f_{\al}(x^n) x^n = h_{\alpha, \gamma}(x^n),
\]
noting that $f_{\al,\gamma}(x)$ is strictly increasing and using
\eqref{p6a1} and \eqref{p6a2}.
\end{proof}

Now we give the classification when $\mu^{(k)}\in ID$ for $k<0$ and
$0<r<p$. As usual, for $x\in\R$ we shall denote by $\lfloor x
\rfloor$ the largest integer being smaller than or equal to $x$, and
by $\lceil x \rceil$ the smallest integer being greater than or
equal to $x$.

\begin{thm}\label{t6c}
Let $k$ be a negative integer.  Assume $0<r<p$.

{\rm (i)} If\/ $2c^j\not\in\N$ for all integers $j$ satisfying
$j\geq|k|$, then $\mu^{(k)}\in ID^0$.

{\rm (ii)} Suppose that $c^j\in\N$ for some $j\in\N$. Let $l$ be the
smallest of such $j$ and let $\al=c^{l}$, $\beta := \lceil
|k|/l\rceil$ and  $h_{\al,\beta}$ be defined by \eqref{6-2}. Then
$\mu^{(k)}\in ID$ if and only if $q>0$ and
\begin{equation} \label{char-ii} h_{\al,\beta
}(q^{\al^\beta})\geq r/p.
\end{equation}

 {\rm (iii)} Suppose
that $2 c^j \in \N_{\rm odd}$ for some $j\in \N$ with $j\geq |k|$.
Then 
$c^{j'}\not\in\N$ for all $j'\in\N$, and $j\in\N$ satisfying
$2 c^j \in \N_{\rm odd}$ is unique.
Let $\alpha=c^j$. Then $2\alpha \in \N_{\rm odd}$ and
$2\alpha\geq 3$.

\noindent $\mbox{\rm (iii)}_1$ Suppose that $2\alpha \geq 5$. Then
$\mu^{(k)} \in ID$ if and only if
\begin{equation} \label{eq-char-3a}
 q^{2\alpha} +
(r/p)^{2\alpha} \geq {\alpha} \,(r/p)^2.
\end{equation}

\noindent $\mbox{\rm (iii)}_2$ Suppose that $2\alpha = 3$. For $m\in
\N$, denote by $t(m)$ the largest integer $t'$ such that $m$ is an
integer multiple of $2^{t'}$, and write $a_m := m^{-1} (q^m -
(-r/p)^m)$
 Then $\mu^{(k)} \in
ID$ if and only if
\begin{equation} \label{eq-char-3b}
\sum_{s=0}^{t(m)} a_{3^{s+1} 2^{-s} m} \geq (2m)^{-1} (r/p)^{2m} ,
\quad \forall\; m \in \{1,\ldots, 149\}.
\end{equation}
\end{thm}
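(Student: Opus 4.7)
The strategy is to test when the quasi-L\'evy measure
$\nu_{\mu^{(k)}}$ from \eqref{t4b3}--\eqref{t4b4} established in the
proof of Theorem~\ref{t4a} is non-negative: since $\mu^{(k)}\in
ID\cup ID^0$ is already known, we have $\mu^{(k)}\in ID$ if and only
if $\nu_{\mu^{(k)}}\ge0$. At any candidate atom $x>0$, the coefficient
$\nu_{\mu^{(k)}}(\{x\})$ is a sum of positive contributions
$m^{-1}q^m$ from pairs $(m,n)$ with $mc^{-n}=x$ and signed
contributions $(-1)^{m+1}m^{-1}(r/p)^m$ from pairs with $mc^{|k|-n}=x$;
only even $m$ in the second collection produce negative pieces, so
the proof reduces to understanding when those are cancelled.

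For (i), one first observes that $c^{j}\notin\N$ for every $j\in\N$
(else some $c^{nj}\in\N$ with $nj\ge|k|$ would force $2c^{nj}\in\N$,
contrary to hypothesis) and that there is at most one $j^*\in\N$ with
$2c^{j^*}\in\N$, which must satisfy $j^*<|k|$ and
$2c^{j^*}\in\N_{\mrm{odd}}$, so $2c^{j^*}\ge3$; uniqueness is by the
Bezout-type parity argument used for (iii) below. A direct computation
of $\nu_{\mu^{(k)}}(\{2c^{|k|}\})$ shows that the first sum
contributes nothing (since $2c^{|k|+n}\notin\N$ for $n\ge0$), while
the second sum contributes $-\tfrac12(r/p)^2$ from $(m,n)=(2,0)$ and,
if $j^*$ exists, $(2c^{j^*})^{-1}(r/p)^{2c^{j^*}}$ from
$(m,n)=(2c^{j^*},j^*)$. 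Because $0<r<p$ and $2c^{j^*}\ge3$, the
inequality $(2c^{j^*})^{-1}(r/p)^{2c^{j^*}}<\tfrac12(r/p)^2$ is
immediate, so the coefficient is strictly negative and $\mu^{(k)}\in
ID^0$.

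For (ii), using that $c^{j}\in\Q$ precisely when $l\mid j$ (and then
$c^{j}=\alpha^{j/l}\in\N$), I enumerate all representations of
$x^*:=2c^{|k|}$ in the two sums and obtain
$\nu_{\mu^{(k)}}(\{x^*\})=\tfrac12\bigl[\alpha^{-\beta}
F_\alpha(q^{\alpha^\beta})-F_\alpha(r/p)\bigr]$, which by \eqref{6-2}
and the strict increase of $F_\alpha$ is $\ge 0$ exactly when
$h_{\alpha,\beta}(q^{\alpha^\beta})\ge r/p$; this gives necessity, and
$q>0$ is needed because $q=0$ yields a compactly supported non-Dirac
$\mu^{(k)}$, which cannot be in $ID$. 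For sufficiency I verify
non-negativity at every other atom $x=m_0 c^{-n_0}$ in canonical form
($\alpha\nmid m_0$ or $n_0=0$). After tallying contributions, split by
the parity pattern of $m_0\alpha^{k'}$, the required inequality takes
the form $h_{\alpha,N}(q^{m_0\alpha^M/2})\ge(r/p)^{m_0\alpha^{M'}/2}$
for non-negative integers $M\ge M'$ with $N=M-M'\le\beta$, and this
follows from the key condition via \eqref{eq-h-new} and \eqref{p6a4}.
The parity-mixed cases (notably $\alpha=2$, where every canonical
$m_0$ is odd) admit an analogous tally in which the $k'=0$ second-sum
term contributes positively.

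For (iii) I first prove that $j$ is unique and $c^{j'}\notin\N$ for
all $j'\in\N$: if $c^{j'}=M\in\N$ and $2c^{j}=m\in\N_{\mrm{odd}}$,
then $c^{jj'}=M^j=(m/2)^{j'}$ gives $M^j 2^{j'}=m^{j'}$, an even-odd
contradiction; if $2c^{j_1}=m_1$ and $2c^{j_2}=m_2$ are both odd with
$j_1<j_2$, then $(c^{j_1})^{j_2}=(c^{j_2})^{j_1}$ yields
$m_1^{j_2}2^{j_1}=m_2^{j_1}2^{j_2}$, forcing $j_1=j_2$. Setting
$\alpha=c^j$, in case (iii)$_1$ only three pairs contribute to
$\nu_{\mu^{(k)}}(\{2\alpha\})$, namely $(2\alpha,0)$ in the first sum
and $(2,0)$ and $(2\alpha,j)$ in the second sum, giving total
$(2\alpha)^{-1}q^{2\alpha}-\tfrac12(r/p)^2+(2\alpha)^{-1}(r/p)^{2\alpha}$;
non-negativity is exactly \eqref{eq-char-3a}, and the gap
$2\alpha\ge5$ prevents further atom-coincidences under iterated
multiplication by $\alpha$, so sufficiency is checked atom by atom. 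In
case (iii)$_2$ the identity $c^j=3/2$ induces many additional
coincidences: at the atom obtained from $m$ odd via $ls$-fold
multiplication by $3/2$, the coefficient of $\nu_{\mu^{(k)}}$
accumulates the contributions $a_{3^{s+1}2^{-s}m}$ for
$s=0,\ldots,t(m)$ on the positive side and $-(2m)^{-1}(r/p)^{2m}$ on
the negative side, producing exactly \eqref{eq-char-3b}; a growth
estimate using $q,r/p\in(0,1)$ shows these inequalities are automatic
for $m\ge150$, leaving $m\in\{1,\ldots,149\}$. The main obstacle
throughout is the sufficiency accounting in (ii) and (iii)$_2$: the
parity-divisibility bookkeeping for $m_0\alpha^{k'}$ in (ii), and in
(iii)$_2$ the interplay between the $3/2$-dilation and the binary
valuation $t(m)$ which determines the explicit threshold $149$.
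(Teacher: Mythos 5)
Your overall route is the paper's: by Theorem~\ref{t4a}, $\mu^{(k)}\in ID$ iff the quasi-L\'evy measure in \eqref{t4b3}--\eqref{t4b4} is non-negative, and one tests its atoms. Part (i) is correct and matches the paper, and your exact evaluation $\nu_{\mu^{(k)}}(\{2c^{|k|}\})=\tfrac12[\alpha^{-\beta}F_\alpha(q^{\alpha^\beta})-F_\alpha(r/p)]$ gives necessity in (ii) just as the paper's $g(|k|,1)$ does; your sufficiency sketch for (ii) compresses the bookkeeping the paper carries out with $g(s,m)$ and \eqref{eq-jm-1}--\eqref{eq-jm-3} (and omits noting that \eqref{char-ii} forces $r\le pq$, which is what makes the merged coefficients $a_m$ non-negative), but the family of inequalities you arrive at and the reduction via \eqref{eq-h-new} and \eqref{p6a4} are the right ones. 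In (iii) there is a slip in the necessity computation: the second-sum pair $(2,0)$ sits at the atom $2c^{|k|}$, not at $2\alpha=2c^{j}$, and these coincide only when $j=|k|$; for $j>|k|$ the atom $2\alpha$ carries no negative mass at all, so the atom to test is $2c^{s}$ with $s\in\{1,\dots,|k|\}$ (the value you wrote is indeed $\nu_{\mu^{(k)}}(\{2c^{|k|}\})$, so necessity is salvageable).

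The genuine gap is the sufficiency in $\mbox{(iii)}_1$ and $\mbox{(iii)}_2$, which is the bulk of the paper's work there. The full requirement is \eqref{eq-1}: $\sum_{s=0}^{t(m)}a_{2m\alpha^{s+1}}\ge(2m)^{-1}(r/p)^{2m}$ for \emph{every} $m\in\N$, and the multi-term coincidences occur for every even $m$ regardless of the size of $2\alpha$ (e.g.\ $m=2$ gives $a_{4\alpha}+a_{4\alpha^2}$), so the assertion that "the gap $2\alpha\ge5$ prevents further atom-coincidences" is false and beside the point: what must be proved in $\mbox{(iii)}_1$ is that the single $m=1$ inequality \eqref{eq-char-3a} implies all the others. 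The paper does this by first deriving \eqref{eq-11} (which for $2\alpha=5$ needs a separate numerical estimate), then comparing $\varphi_1(x)=e^{dx}+1$ with $\varphi_2(x)=(b/2)e^{\delta x}$ to settle all odd $m$, then an induction \eqref{eq-indu} for even $m$; nothing in your proposal substitutes for this. Similarly, in $\mbox{(iii)}_2$ the inequalities for $m\ge150$ are not "automatic from $q,r/p\in(0,1)$": for odd $m$ they read $q^{3m}+(r/p)^{3m}\ge(3/2)(r/p)^{2m}$, which fails for instance whenever $q^3<(r/p)^2$ and $m$ is large. They must be \emph{deduced} from specific members of \eqref{eq-char-3b} --- the paper uses $m=2$ to get $r/p<(13/14)^{1/4}$ and $m=75$ to get $q^3\ge(5/4)^{1/75}(r/p)^2$, and then bounds $[3/2+(r/p)^m]^{1/m}$ for $m\ge150$ --- and it is exactly this interplay that produces the threshold $149$. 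As written, your argument establishes necessity of \eqref{eq-char-3a} and \eqref{eq-char-3b} but not their sufficiency.
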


\begin{proof}
For all cases (i) -- (iii) observe that we have \eqref{t4b3}
 and \eqref{t4b4} since $0<r<p$. Therefore
 \begin{equation}\label{t4c1}
\nu_{\mu^{(k)}} =\sum_{n=0}^\infty \sum_{m'=1}^\infty
a_{m'}\dl_{c^{-n}m'} +\sum_{s'=1}^{|k|} \sum_{m'=1}^\infty (m')^{-1}
(-1)^{m'+1} (r/p)^{m'} \dl_{c^{s'} m'}
\end{equation}
with
\begin{equation}\label{t5b2b}
a_{m'} = (m')^{-1} (q^{m'}  -( - r/p)^{m'}) .
\end{equation}
We have $\mu^{(k)} \in ID$ if and only if $\nu_{\mu^{(k)}}\geq 0$.

To prove (i), assume that $2c^j\not\in\N$ for $j\geq|k|$. Consider
$\nu_{\mu^{(k)}} (\{ 2c^{|k|}\})$. Let $E=\{s\in\{1,2,\cdots,|k|-1\}
\colon 2c^s\in\N_{\mrm{odd}}\}$. Since $2c^j\not\in\N$ for
$j\geq|k|$, \eqref{t4c1} gives
\[
\nu_{\mu^{(k)}}(\{2c^{|k|}\})\leq  -(r/p)^2 /2 +\sum_{s\in E} (2
c^s)^{-1} (r/p)^{2c^s}.
\]
But since $E$ contains at most one element, we have
$$\sum_{s\in E} (2c^s)^{-1} (r/p)^{2c^s} < (r/p)^2 /2,$$
so that $\nu_{\mu^{(k)}}(\{2c^{|k|}\}) < 0$. Hence $\mu^{(k)}\in
ID^0$.

Let us prove (ii). Assume that $c^j \in \N$ for some $j\in\N$ and
let $l,\alpha,\beta$ be as in the statement of the theorem.
 If $q=0$ or if $q>0$ and $r>pq$, then $\mu^{(0)}\not\in ID$ and
hence $\mu^{(k)}\not\in ID$ by Proposition \ref{p5a} and Theorem
\ref{t4b}. Since $h_{\alpha,\beta} (q^{\alpha^{\beta}}) <
q^{\alpha^{\beta}} < q$ for $q>0$, 
condition \eqref{char-ii} implies
$r\leq pq$. Hence we may assume $q>0$ and $r\leq pq$ from now on,
which in particular implies $a_{m'}\geq 0$. Hence
\begin{equation}\label{eq-jm-0}
\mu^{(k)}\in ID\quad\Leftrightarrow\quad {\begin{split}
&\text{$\nu_{\mu{(k)}} ( \{z\}) \ge0$ for all $z$ of the form $z=2m c^s$}\\
&\text{with $s\in \{1,\ldots, |k|\}$, $m\in \N$.} \end{split} }
\end{equation}
For $s\in \{1,\ldots, |k|\}$ and $m\in \N$ denote
$$g(s,m) := \sum_{n\in \N_0: 2mc^{s+n} \in \N} a_{2m c^{s+n}} - \sum_{n\in
\{0,\ldots,s-1\}: 2m c^{n}\in \N} (2m)^{-1} c^{-n} (r/p)^{2mc^n}.$$
If 
$z=2mc^s$ with $s\in \{1,\ldots, |k|-1\}$ and $m\in\N$, but
cannot be written in the form $z=2{m'} c^{s'}$ with $m'\in \N$ and
$s' \in \{s+1,\ldots, |k|\}$, then $2mc^{s-s'}\not\in
\N_{\mrm{even}}$ for $s'\in \{s+1,\ldots, |k|\}$, and hence by
\eqref{t4c1}
\begin{equation} \label{eq-jm-1}
\begin{split}
&\nu_{\mu^{(k)}} (\{z\})\\
&\geq \sum_{n\in \N_0: 2mc^{s+n}\in\N} a_{2m
c^{s+n}} - \sum_{s'\in \{1,\ldots, s\}: 2m c^{s-s'}\in \N}
\frac{1}{2m c^{s-s'}} (r/p)^{2m c^{s-s'}} = g(s,m).
\end{split}
\end{equation}
If 
$z=2m c^{|k|}$ with $m\in \N$, then
\begin{equation}\label{eq-jm-2}
\begin{split}
&\nu_{\mu^{(k)}} (\{z\})\\
&= \sum_{n\in \N_0: 2mc^{|k|+n}\in\N} a_{2m
c^{|k|+n}} - \sum_{s'\in \{1,\ldots, |k|\}: 2m c^{|k|-s'}\in \N}
\frac{1}{2m c^{|k|-s'}} (r/p)^{2m c^{|k|-s'}}\\
&= g(|k|,m).
\end{split}
\end{equation}
We claim
that for $s\in \{1,\ldots, |k|\}$ we have
\begin{equation} \label{eq-jm-3}
g(s,m) \ge 0, \quad \forall\; m\in \N \quad \Leftrightarrow \quad
h_{\alpha,\lceil s/l\rceil} (q^{\alpha^{\lceil s/l\rceil}}) \ge r/p.
\end{equation}
Once we have established \eqref{eq-jm-3}, then \eqref{eq-jm-0} and
\eqref{eq-jm-2} show that \eqref{char-ii} is necessary for
$\mu^{(k)} \in ID$, while \eqref{eq-jm-0} -- \eqref{eq-jm-2} show
that it is also sufficient, since monotonicity of $h_{\alpha, \lceil
s/l\rceil}$ and \eqref{eq-h-new} imply
$$h_{\alpha,\lceil s/l\rceil}
(q^{\alpha^{\lceil s/l\rceil}}) \ge h_{\alpha,\lceil s/l\rceil}
(q^{\alpha^{\beta}}) \ge h_{\alpha,\beta}
(q^{\alpha^{\beta}}),\qquad \forall\; s\in \{1,\ldots, |k|\}.$$
 To show
\eqref{eq-jm-3}, observe that for $j\in\N_0$ we have
$c^j=\alpha^{j/l}\in \N$ if and only if $j$ is an integer multiple
of $l$, and that $c^j$ is irrational otherwise. From this property,
we see that
$$g(s,m) = \sum_{n'=0}^\infty a_{2m \alpha^{\lceil s/l \rceil}
\alpha^{n'}} - \sum_{n'=0}^{\lfloor (s-1)/l\rfloor} (2m)^{-1}
\alpha^{-n'} (r/p)^{2m \alpha^{n'}}.$$ Observing that $\lfloor
(s-1)/l\rfloor = \lceil s/l\rceil -1$, we have for $s\in \{1,\ldots,
|k|\}$ and $m\in \N$,
\begin{align*}
\lefteqn{g(s,m) \ge 0}\\
 &\Leftrightarrow\quad  \sum_{n=0}^{\infty}
\frac{1}{2m \alpha^{\lceil s/l\rceil +n}} \left[ q^{2m\alpha^{\lceil
s/l\rceil +n}}-(r/p)^{2m\alpha^{\lceil s/l \rceil +n}}\right]
\ge  \sum_{n=0}^{\lceil s/l \rceil -1} \frac{1}{2m\alpha^n} (r/p)^{2m\alpha^{n}}\\
&\Leftrightarrow\quad \alpha^{-\lceil s/l \rceil}
\sum_{n=0}^{\infty} \alpha^{-n} (q^{m\alpha^{\lceil
s/l\rceil}})^{2\alpha^{n}}\ge \sum_{n=0}^{\infty} \alpha^{-n}
\left[(r/p)^m\right]^{2\alpha^{n}}\\
&\Leftrightarrow\quad \alpha^{-\lceil s/l\rceil} F_{\alpha}
(q^{m \alpha^{\lceil s/l \rceil}})\ge F_{\alpha}((r/p)^m)\\
&\Leftrightarrow\quad h_{\alpha,\lceil s/l\rceil}(q^{m\alpha^{\lceil
s/l \rceil}}) \ge (r/p)^m.
\end{align*}
Now \eqref{eq-jm-3} follows from property \eqref{p6a4} of
$h_{\alpha, \lceil s/l\rceil}$, completing the proof of (ii).

Let us prove (iii).  Assume that $2c^{j''} \in \N_{\rm odd}$ for
some $j''\in \N$ with $j''\geq |k|$.
Let $j$ be the smallest positive integer such that $c^j\in\Q$. Then
$c^{j'}\in\Q$ with $j'\in\N$ if and only if $j'$ is an integer
multiple of $j$.  We have $c^j\not\in\N$, since $2c^{j''} \in
\N_{\rm odd}$ for some $j''\in \N$.  Denote $c^j=a'/b'$ with
$a',b'\in\N$ having no common divisor.  Then $2c^{nj}=
2(a'/b')^n\not\in\N$ for all $n\in\N$ with $n\geq2$. Hence
$c^{j'}\not\in\N$ for all $j'\in \N$ and $2c^{j'} \in \N_{\rm odd}$
if and only if $j'=j$, so that $j=j''$. As in the proof of (ii), if
$q=0$, or if $q>0$ and $r> pq$, then $\mu^{(k)} \not\in ID$. On the
other hand, \eqref{eq-char-3a} for $2\alpha\geq 5$ clearly implies
$r\leq pq$, and as will be shown later in Equation~\eqref{eq-26} in
the proof of $\mbox{\rm (iii)}_2$, \eqref{eq-char-3b} implies $q^3
\geq (r/p)^2 (5/4)^{1/75}$.
 Hence we may and do assume $q>0$
and $r\leq pq$ from now on (this assumption will not be needed when
\eqref{eq-26} is derived from \eqref{eq-char-3b} in $\mbox{\rm
(iii)}_2$). In particular, $a_{m'} \geq 0$ for $m'\in \N$, and
$\mu^{(k)}\in ID$ is characterized by the right-hand side of
\eqref{eq-jm-0}. But as follows from the discussion above and the
fact that $j\geq |k|$, if
$z=2mc^s$ with $s\in \{1,\ldots, |k|\}$ and $m\in \N$,
then $z=c^{s'} m'$ for $s'\in\{1,\ldots, |k|\}$ and $m'\in \N$ if
and only if $s'=s$ and $m'=2m$. Further, since $c^{s+n} \in
\mathbb{Q}$ for $s\in \{1,\ldots, |k|\}$ and $n\in \N_0$ if and only
if  $s+n = (s'+1) j$ for some $s'\in \N_0$, in which case $c^{s+n} =
\alpha^{s'+1}$, \eqref{t4c1} gives
\begin{equation} \label{7a}
\nu_{\mu^{(k)}} ( \{ 2mc^s\} ) = \sum_{s'\in \N_0 :
2m\alpha^{s'+1}\in \N} a_{2m\al^{s'+1}} - (2m)^{-1} (r/p)^{2m}
\end{equation}
for $s\in \{1,\ldots,|k|\}$ and $m\in\N$.
Observe that this quantity does not depend on $s$.
Denote by $t(m)$ the largest integer $t'$ such that $m$ is
an integer multiple of $2^{t'}$, and observe that $2m \alpha^{s'+1}
= (2\alpha)^{s'+1} 2^{-s'} m \in \N$ with $s'\in \N_0$ if and only if
$s'\leq t(m)$ due to the assumption $2\alpha \in \N_{\rm odd}$. From
\eqref{eq-jm-0} and \eqref{7a} we hence conclude that
\begin{equation}\label{eq-1}
\mu^{(k)}\in ID\quad\Leftrightarrow\quad \sum_{s=0}^{t(m)}
a_{2m\alpha^{s+1}}\geq (2m)^{-1} (r/p)^{2m}  \text{ for all $m\in
\N$}.
\end{equation}

$\mbox{\rm (iii)}_1$ Now assume that $b := 2\alpha \geq 5$. If
$\mu^{(k)}\in ID$, then \eqref{eq-1} with $m=1$ gives
\eqref{eq-char-3a}, so that \eqref{eq-char-3a} is necessary for
$\mu^{(k)}\in ID$. To show that it is also sufficient, assume that
\eqref{eq-char-3a} holds for the rest of the proof of $\mbox{\rm
(iii)}_1$. We first claim that we have
\begin{equation} \label{eq-11}
q^b \geq (r/p)^b + (r/p)^2.
\end{equation}
Indeed, if $b\geq 7$, then \eqref{eq-char-3a} gives
$$q^b > (\frac{b}{2} -1) (r/p)^2 \geq 2 (r/p)^2 \geq (r/p)^b + (r/p)^2,$$
which is \eqref{eq-11}. If $b=5$, consider the function $\varphi: \R
\to \R$, $x\mapsto (5/2) x^2 - x^5$. Then $\varphi'(x) = 5x
(1-x^3)$, so that $\varphi$ is increasing on $[0,1]$. But
$\varphi\left( (3/4)^{1/3}\right) \approx 1.4 > 1,$ so that $(5/2)
(r/p)^2 - (r/p)^5 > 1$ whenever $r/p\geq (3/4)^{1/3}$. But since
$q<1$, it follows that \eqref{eq-char-3a} for $b=5$ implies $r/p
\leq (3/4)^{1/3}$, and hence \eqref{eq-char-3a} gives
\begin{equation*}
q^5 - \left[ (r/p)^5 + (r/p)^2 \right]
 \geq  (3/2) (r/p)^2 - 2 (r/p)^5
=  (3/2) (r/p)^2 \left[ 1 - (4/3) (r/p)^3 \right] \geq 0,
\end{equation*}
which is \eqref{eq-11} also in the case $b=5$.

Denote $ d := b \log (qp/r)$, $ \delta  :=  (b -2) \log (p/r)$, and
define the functions $\varphi_1, \varphi_2: \R \to \R$ by
$\varphi_1(x) = e^{dx}+1$ and $\varphi_2(x) = (b/2) e^{\delta x}$,
respectively. 
Observe that $\varphi_1(0) = 2 < (b/2)
= \varphi_2(0)$. Further, \eqref{eq-char-3a} gives $\varphi_1(1)
\geq \varphi_2(1)$, and \eqref{eq-11} shows that $d>\delta$.
Let $x_0\in(0,1]$ be the point satisfying $\ph_1(x_0)=\ph_2(x_0)$ and
$\ph_1(x)<\ph_2(x)$ for $x\in [0,x_0)$.  Then
$\ph'_1(x_0)\geq \ph'_2(x_0)$, that is, $de^{(d-\dl)x_0}\geq
\dl b/2$, which implies $de^{(d-\dl)x}>\dl b/2$ for $x>x_0$.
Hence, for all  $x> x_0$, $\ph'_1(x)> \ph'_2(x)$ and $\ph_1(x)>\ph_2(x)$.
This gives
\begin{equation*}
\left( \frac{qp}{r} \right)^{b m} + 1 \geq \frac{b}{2} \left(
p/r\right)^{(b -2)m}, \quad \forall\; m\in \N_{\rm odd},
\end{equation*}
so that $a_{bm} \geq (2m)^{-1} (r/p)^{2m}$ for $m\in \N_{\rm odd}$
 which is the right-hand side of \eqref{eq-1} for $m\in
\N_{\rm odd}$. Hence it only remains to show that the right-hand
side of \eqref{eq-1} holds for all $m\in \N_{\rm even}$, too. Since
$\sum_{s=0}^{t(m)} a_{2m\alpha^{s+1}} \geq a_{2m \alpha}= a_{bm}$,
by induction it is enough to prove the following for $m\in\N$:
\begin{equation} \label{eq-indu}
\text{If}\; a_{bm} \geq (2m)^{-1} (r/p)^{2m},\text{ then}\; a_{2bm}
 \geq (4m)^{-1} (r/p)^{4m}.
\end{equation}
So assume that $a_{bm} \geq (2m)^{-1} (r/p)^{2m}$. If
$m\in \N_{\rm even}$, this means that $q^{b m} - (r/p)^{b m} \geq
(b/2) (r/p)^{2m}$, and it follows that
\begin{eqnarray*}
a_{ 2bm} 
& = & \frac{1}{2b m} \left[ q^{bm} - (r/p)^{b m}
\right] \, \left[ q^{b m} + (r/p)^{b m} \right]\nonumber \\
&{\geq} & \frac{1}{4m} (r/p)^{2m} \left[
q^{b m} + (r/p)^{b m} \right] \nonumber \\
& \geq & \frac{1}{4m} (r/p)^{2m} (r/p)^{2m} = \frac{1}{4m}
(r/p)^{4m}\nonumber ,
\end{eqnarray*}
where the last inequality follows from \eqref{eq-11}. If $m\in
\N_{\rm odd}$, then $a_{bm} \geq (2m)^{-1} (r/p)^{2m}$ means that
$q^{b m} + (r/p)^{b m} \geq (b/2) (r/p)^{2m}$, so that
\begin{eqnarray*}
a_{2bm} & = & \frac{1}{2b m} \left[ q^{b m} + (r/p)^{b m}
\right] \, \left[ q^{b m} - (r/p)^{b m} \right] \\
& \geq & \frac{1}{4m} (r/p)^{2m} \left[ q^{b m} - (r/p)^{b m}
\right]\\
& \geq &  \frac{1}{4m} (r/p)^{4m},
\end{eqnarray*}
where the last inequality follows from the fact that \eqref{eq-11}
implies
\begin{equation*}
q^b \geq \left[ (r/p)^{b m} + (r/p)^{2m}
\right]^{1/m}.\end{equation*} This completes the proof of
\eqref{eq-indu}, and it follows that \eqref{eq-char-3a} implies the
right-hand side of \eqref{eq-1} for all $m\in \N$, so that
\eqref{eq-char-3a} is also sufficient for $\mu^{(k)} \in ID$.

$\mbox{\rm (iii)}_2$ Now assume that $b := 2\alpha = 3$. Clearly,
\eqref{eq-char-3b} is nothing else than the right-hand side of
\eqref{eq-1} for $m\leq 149$, showing that \eqref{eq-char-3b} is
necessary for $\mu^{(k)} \in ID$. For the converse, assume that
\eqref{eq-char-3b} holds (without assuming a priori that $r\leq
pq$). Then \eqref{eq-char-3b} applied with $m=2$ gives $a_6 + a_9
\geq 4^{-1} (r/p)^4$, which is equivalent to
$$q^6 + (2/3) q^9  \geq  (3/2) (r/p)^4 + (r/p)^6 - (2/3)
(r/p)^9.$$ But since $q<1$ and $r/p < 1$, this implies
\begin{equation} \label{eq-41}
5/3 \geq  (r/p)^4 \left[ 3/2 + (r/p)^2 - (2/3) (r/p)^5 \right] \geq
(r/p)^4 \left[ 3/2 + (r/p)^2 - 2/3 \right].
\end{equation}
Using $x^4 \left[ 5/6\, + x^2 \right] \geq 1.6686... > 5/3$ for
$x\geq (13/14)^{1/4}$, \eqref{eq-41} gives
\begin{equation}
r/p  <  (13/14)^{1/4}. \label{eq-25}
\end{equation}
Applying \eqref{eq-char-3b} with $m=75$, i.e. using $a_{225} \geq
150^{-1} (r/p)^{150}$, gives
\begin{equation} \label{eq-42}
q^3  \geq (r/p)^2 \left[ 3/2 - (r/p)^{75} \right]^{1/75}.
\end{equation}
An application of \eqref{eq-25} shows that
$${\left[ 3/2 - (r/p)^{75} \right]^{1/75}}
 \geq  \left[ 3/2 - (13/14)^{75/4} \right]^{1/75}
 =  \left[ 3/2 - 0.2491...\right]^{1/75}  \geq (5/4)^{1/75},$$
which together with \eqref{eq-42} results in \begin{equation} q^3
\geq (r/p)^2 (5/4)^{1/75}. \label{eq-26}
\end{equation}
Now if $m\geq 150$, it follows from \eqref{eq-25} that
\begin{eqnarray*}
\left[ 3/2 + (r/p)^m \right]^{1/m} & {\leq} &
\left[ 3/2 + \left( 13/14\right)^{m/4}\right]^{1/m} \\
& \leq & \left[ 3/2 + \left( {13/14} \right)^{150/4} \right]^{1/150}
=  (1.2498...)^{1/75} < (5/4)^{1/75}.
\end{eqnarray*}
Together with \eqref{eq-26} this shows that
$$q^3  \geq  (r/p)^2 \left[ 3/2 + (r/p)^m \right]^{1/m}, \quad
m\geq 150.$$ But for $m$ even, $m\geq 150$, the last equation is
equivalent to $a_{3m} \geq (2m)^{-1} (r/p)^{2m}$. On the other hand,
if $m$ is odd and $m\geq 150$, then \eqref{eq-26} gives
$$a_{3m} = \frac{1}{3m} \left[ q^{3m} + (r/p)^{3m} \right] \geq \frac{1}{3m} q^{3m}
\geq \frac{1}{3m} (r/p)^{2m}   (5/4)^{m/75} \geq \frac{1}{2m}
(r/p)^{2m},$$ where we used $(5/4)^2 \geq 3/2$ in the last
inequality. Hence we obtain for $m\in \N$, $m\geq 150$, that
$$\sum_{s=0}^{t(m)} a_{3^{s+1} 2^{-s} m} \geq  a_{3m} \geq (2m)^{-1} (r/p)^{2m},$$
so that \eqref{eq-char-3b} implies the right-hand side of
\eqref{eq-1}. Hence \eqref{eq-char-3b} is sufficient for $\mu^{(k)}
\in ID$, completing the proof.
\end{proof}

\noindent{\it Remark}. The condition \eqref{eq-char-3a} in
Theorem~\ref{t6c} means $a_{2\alpha} \geq 2^{-1} (r/p)^2$ for $a_m$
of \eqref{t5b2b}, which together with $j\geq |k|$ completely
characterizes when $\mu^{(k)}\in ID$ in the case of $\mbox{(iii)}_1$
when $2\alpha\geq 5$. This is different in the case $2\alpha = 3$ of
$\mbox{(iii)}_2$. Here, the condition $a_{2\alpha} \geq 2^{-1}
(r/p)^2$  is not enough to ensure that $\mu^{(k)} \in ID$. For
example, if $q^3 > 1/2$ and $r/p= (13/14)^{1/4}$, then $a_{2\alpha}
\geq 2^{-1} (r/p)^2$, which is \eqref{eq-char-3b} for $m=1$, but
$\mu^{(k)} \not\in ID$, since \eqref{eq-char-3b} for $m=2$ implies
\eqref{eq-25} as shown in the proof of $\mbox{(iii)}_2$.
Nevertheless, there seems room to reduce the 149 conditions of
\eqref{eq-char-3b} to a smaller number, but we shall not investigate
this subject further.
\medskip

The following corollary gives handy sufficient and handy necessary
conditions for $\mu^{(k)} \in ID$.

\begin{cor} \label{cor-suf-nec}
Let $k$ be a negative integer and assume that $0<r<p$.

(i) Suppose that $c^j \in \N$ for some $j\in \N$. Let $l$ be the
smallest of such $j$ and let $\alpha = c^l$ and $\beta := \lceil
|k|/l \rceil$. Then $q^{\alpha^\beta} > \alpha^{\beta/4} (r/p)$ is a
necessary condition for $\mu^{(k)} \in ID$, while $q^{\alpha^\beta}
\geq \alpha^{\beta/2} (r/p)$ is a sufficient condition for
$\mu^{(k)} \in ID$.

(ii) Suppose that $2c^j \in \N_{\rm odd}$ for some $j\in\N$ with
$j\geq |k|$, and let $\alpha = c^j$. Then $q^\alpha > r/p$ is a
necessary condition for $\mu^{(k)} \in ID$, and $q^\alpha \geq
\alpha^{1/2} (r/p)$ is a sufficient condition for $\mu^{(k)} \in
ID$. If $2\alpha \geq 5$, then $q^\alpha > (\alpha-1)^{1/2} (r/p)$
is another necessary condition for $\mu^{(k)} \in ID$.
\end{cor}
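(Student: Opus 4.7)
The plan is to read each condition directly off the sharp characterizations in Theorem~\ref{t6c}, using the size estimates on $h_{\alpha,\gamma}$ from Proposition~\ref{p6a} for part~(i), and elementary manipulation of the inequalities \eqref{eq-char-3a} and \eqref{eq-char-3b} for part~(ii).

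For (i), Theorem~\ref{t6c}(ii) says $\mu^{(k)}\in ID$ if and only if $q>0$ and $h_{\alpha,\beta}(q^{\alpha^\beta})\geq r/p$. Proposition~\ref{p6a} tells us that $f_{\alpha,\beta}(x)=x^{-1}h_{\alpha,\beta}(x)$ is strictly increasing on $(0,1]$, tends to $\alpha^{-\beta/2}$ as $x\downarrow 0$, and satisfies $f_{\alpha,\beta}(1)<\alpha^{-\beta/4}$. Consequently
\[
\alpha^{-\beta/2}\,x \;<\; h_{\alpha,\beta}(x) \;<\; \alpha^{-\beta/4}\,x, \qquad x\in(0,1].
\]
Substituting $x=q^{\alpha^\beta}$ and comparing with $r/p$ yields immediately the sufficient condition $q^{\alpha^\beta}\geq \alpha^{\beta/2}(r/p)$ (from the left inequality) and the necessary condition $q^{\alpha^\beta}>\alpha^{\beta/4}(r/p)$ (from the right inequality).

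For (ii), split according to whether $2\alpha\geq 5$ or $2\alpha=3$. When $2\alpha\geq 5$, Theorem~\ref{t6c}(iii)$_1$ characterizes $\mu^{(k)}\in ID$ by $q^{2\alpha}+(r/p)^{2\alpha}\geq \alpha(r/p)^2$. Sufficiency of $q^\alpha\geq\alpha^{1/2}(r/p)$ is immediate because it already forces $q^{2\alpha}\geq\alpha(r/p)^2$. Since $r/p<1$ and $2\alpha>2$, we have $(r/p)^{2\alpha}<(r/p)^2$, which plugged into the characterization gives $q^{2\alpha}>(\alpha-1)(r/p)^2$, i.e., the additional necessary condition. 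For the necessary condition $q^\alpha>r/p$, argue by contradiction: $q^\alpha\leq r/p$ would give $q^{2\alpha}+(r/p)^{2\alpha}\leq (r/p)^2+(r/p)^{2\alpha}<2(r/p)^2\leq\alpha(r/p)^2$ (using $\alpha\geq 5/2$), contradicting the characterization.

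When $2\alpha=3$ we invoke Theorem~\ref{t6c}(iii)$_2$. The necessary condition $q^\alpha>r/p$, i.e., $q^3>(r/p)^2$, is a direct consequence of \eqref{eq-26} established in that proof. For the sufficient condition $q^3\geq(3/2)(r/p)^2$, observe first that together with $q<1$ it forces $(r/p)^2<2/3$, and that $q>r/p$ makes every $a_{m'}$ non-negative, so $\sum_{s=0}^{t(m)}a_{3^{s+1}2^{-s}m}\geq a_{3m}$. It then suffices to verify $a_{3m}\geq(2m)^{-1}(r/p)^{2m}$ for $m\in\{1,\ldots,149\}$: for odd $m$ this reduces to $(3/2)^m\geq 3/2$, true for all $m\geq 1$; for even $m$ it reduces to $(3/2)^m-(r/p)^m\geq 3/2$, which follows from $(r/p)^m\leq(r/p)^2<2/3$ together with $(3/2)^m\geq 9/4$ for $m\geq 2$. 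The main obstacle is this last verification in the $2\alpha=3$ case, since the characterization involves 149 inequalities rather than one clean one; the crucial simplification is that the sufficient hypothesis is strong enough to make just the leading summand $a_{3m}$ already exceed the required threshold in \eqref{eq-char-3b}.
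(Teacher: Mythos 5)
Your proposal is correct. Part (i) is essentially identical to the paper's proof: both use the strict monotonicity of $f_{\al,\beta}$ together with \eqref{p6a1} and \eqref{p6a2} to squeeze $h_{\al,\beta}(q^{\al^\beta})$ between $\al^{-\beta/2}q^{\al^\beta}$ and $\al^{-\beta/4}q^{\al^\beta}$ and then read off the result from \eqref{char-ii}. In part (ii) you take a genuinely different route. The paper proves the necessity of $q^\al>r/p$ uniformly for $2\al\ge3$ from the condition \eqref{eq-1} inside the proof of Theorem~\ref{t6c}, letting $m\to\infty$ through odd $m$, and proves the sufficiency of $q^\al\ge\al^{1/2}(r/p)$ by passing through the factor: the chain $q^{2c^{|k|}}\ge q^{2\al}\ge\al(r/p)^2\ge c^{|k|}(r/p)^2$ and Theorem~\ref{t6a} give $\rh^{(k)}\in ID$, hence $\mu^{(k)}\in ID$. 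You instead work directly from the statement-level characterizations, splitting $2\al\ge5$ (inequality \eqref{eq-char-3a}) from $2\al=3$ (the 149 inequalities \eqref{eq-char-3b}), and in the latter case you check all of \eqref{eq-char-3b} by hand under the hypothesis $q^3\ge(3/2)(r/p)^2$, using only $a_{3m}$ from each sum; for the necessity when $2\al=3$ you quote \eqref{eq-26}, which the paper derives from \eqref{eq-char-3b} without assuming $r\le pq$, so that step is legitimate. Your route costs a case distinction and the (easy) verification of the 149 inequalities, but it buys two things: it stays at the level of the stated theorems rather than of equations internal to their proofs (except for \eqref{eq-26}), and for the sufficiency it does not need the detour through $\rh^{(k)}$ --- a detour which, as stated, invokes Theorem~\ref{t6a} and hence the requirement $2c^{|k|}\in\N$, which holds only when $j=|k|$; your direct verification of \eqref{eq-char-3a} and \eqref{eq-char-3b} covers the case $j>|k|$ without any extra argument. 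All individual estimates you use ($q>r/p$ forcing $a_{m'}\ge0$, $(3/2)^m-(r/p)^m\ge 9/4-2/3>3/2$ for even $m\ge2$, and the contradiction argument giving $q^\al>r/p$ when $2\al\ge5$) are correct.
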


\begin{proof}
To prove (i), observe that $f_{\alpha,\beta}$ is strictly increasing
by Proposition~\ref{p6a}, so that
$$\alpha^{-\beta/2} < q^{-\alpha^\beta} h_{\alpha,\beta}
(q^{\alpha^\beta}) < \alpha^{-\beta/4}$$ for $q\in (0,1)$ by
\eqref{p6a1} and \eqref{p6a2}. The assertion now follows from
\eqref{char-ii}.

To prove (ii), observe that by \eqref{eq-1} a necessary condition
for $\mu^{(k)} \in ID$ is that $a_{2\alpha m} \geq (2m)^{-1}
(r/p)^{2m}$ for all $m\in \N_{\rm odd}$. The latter condition is
equivalent to
$$\left[ q^{2\alpha} (p/r)^2 \right]^m + (r/p)^{(2\alpha -2)m} \geq
\alpha, \qquad\; \forall\; m\in \N_{\rm odd},$$ which shows that
$q^{2\alpha}
> (r/p)^2$ is a necessary condition for $\mu^{(k)}\in ID$ by
letting $m$ tend to infinity.  It is immediate from
\eqref{eq-char-3a}
that $q^\alpha > (\alpha-1)^{1/2} (r/p)$ is necessary for
$\mu^{(k)} \in ID$ if $2\alpha\geq 5$. If $2\alpha\geq 3$, then $j\geq |k|$ and
$q^\alpha \geq \alpha^{1/2} (r/p)$ imply
$$q^{2c^{|k|}} \geq q^{2 c^{j}} = q^{2\alpha} \geq \alpha
(r/p)^2 \geq c^{|k|} (r/p)^2,$$ so that $q^\alpha \geq \alpha^{1/2}(r/p)$
is a sufficient condition for $\rho^{(k)} \in ID$ by
Theorem~\ref{t6a} and hence for $\mu^{(k)} \in ID$.
\end{proof}

\noindent {\it Remark.} In the case of
Corollary~\ref{cor-suf-nec}~(i), another necessary condition for
$\mu^{(k)} \in ID$ is that $q^{\alpha^\beta} > \alpha^{\beta/2} (1+
\alpha^{-1})^{-1} (r/p)$, provided $\alpha$ is large enough. The
proof is the same but using \eqref{p6a3} instead of \eqref{p6a2}.
Compare with the sufficient condition $q^{\alpha^\beta}\geq
\alpha^{\beta/2} (r/p)$.\medskip

The following corollary is immediate from Theorems~\ref{t4a},
\ref{t6a} and \ref{t6c}.

\begin{cor}\label{c6a}
If $k$ is a negative integer, then parameters $c,p,q,r$ exist such
that $\mu^{(k)} \in ID$ and $\rho^{(k)} \in ID^0$.
\end{cor}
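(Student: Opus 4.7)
The plan is to construct explicit parameters by setting $c=2$, which falls into the regime of Theorem~\ref{t6c}\,(ii) with $l=1$, $\alpha=2$, $\beta=|k|$, and which simultaneously satisfies the hypothesis $2c^{|k|}\in\N$ of Theorem~\ref{t6a}. For this choice, $\mu^{(k)}\in ID$ is equivalent (given $0<r<p$ and $q>0$) to
\[
h_{2,|k|}(q^{2^{|k|}})\geq r/p,
\]
while $\rho^{(k)}\notin ID$ is equivalent to
\[
q^{2^{|k|+1}}<2^{|k|}(r/p)^2,\qquad\text{i.e.,}\qquad r/p>2^{-|k|/2}\,q^{2^{|k|}}.
\]
Hence the task reduces to exhibiting $q\in(0,1)$ for which the interval
\[
\bigl(2^{-|k|/2}\,q^{2^{|k|}},\; h_{2,|k|}(q^{2^{|k|}})\bigr]
\]
is non-empty, and then choosing $r/p$ inside it.

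The key input is the strict inequality $f_{2,|k|}(x)>2^{-|k|/2}$ for all $x\in(0,1]$, which follows immediately from Proposition~\ref{p6a}: $f_{2,|k|}$ is continuous and strictly increasing on $(0,1]$ and satisfies $\lim_{x\downarrow0}f_{2,|k|}(x)=2^{-|k|/2}$. Rewriting $h_{2,|k|}(q^{2^{|k|}})=q^{2^{|k|}}\,f_{2,|k|}(q^{2^{|k|}})$, this gives $h_{2,|k|}(q^{2^{|k|}})>2^{-|k|/2}\,q^{2^{|k|}}$ for every $q\in(0,1)$, so the interval above is indeed non-empty for any such $q$.

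Fix any $q\in(0,1)$, pick a ratio $\tau:=r/p$ in the above interval, and set $p=(1-q)/(1+\tau)$, $r=\tau p$. Then $p+q+r=1$, $q>0$, and $r<p$ since
\[
r/p=\tau\leq h_{2,|k|}(q^{2^{|k|}})\leq h_{2,|k|}(1)<2^{-|k|/4}<1
\]
by \eqref{p6a2} and $|k|\geq1$; in particular $0<r<p$ so Theorem~\ref{t4a}\,(ii) places both $\rho^{(k)}$ and $\mu^{(k)}$ in $ID\cup ID^0$. By construction $\mu^{(k)}\in ID$ via Theorem~\ref{t6c}\,(ii) while $\rho^{(k)}\notin ID$ via Theorem~\ref{t6a}, so $\rho^{(k)}\in ID^0$.

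There is no real obstacle: once one selects $c=2$, the two criteria in Theorems~\ref{t6a} and~\ref{t6c}\,(ii) become directly comparable through the factor $f_{2,|k|}$, and the strict asymptotic lower bound $f_{2,|k|}(x)>2^{-|k|/2}$ from Proposition~\ref{p6a} guarantees a gap between the necessary condition for $\rho^{(k)}\in ID$ and the sufficient condition for $\mu^{(k)}\in ID$. The mildest delicate point is verifying $r<p$ at the end, which is handled by the bound $h_{2,|k|}(1)<2^{-|k|/4}<1$ from Proposition~\ref{p6a}.
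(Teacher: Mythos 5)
Your construction is correct, and it takes the same route as the paper, which states the corollary as an immediate consequence of Theorems~\ref{t4a}, \ref{t6a} and \ref{t6c}: you simply instantiate this with $c=2$, using Proposition~\ref{p6a} (strict monotonicity of $f_{2,|k|}$ together with \eqref{p6a1} and \eqref{p6a2}) to verify that $h_{2,|k|}(q^{2^{|k|}})>2^{-|k|/2}q^{2^{|k|}}$, so that a ratio $r/p$ can be chosen making the sufficient condition \eqref{char-ii} for $\mu^{(k)}\in ID$ hold while the criterion of Theorem~\ref{t6a} for $\rho^{(k)}\in ID$ fails, and Theorem~\ref{t4a}(ii) then places $\rho^{(k)}$ in $ID^0$. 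All steps, including the check $0<r<p$ via $h_{2,|k|}(1)<2^{-|k|/4}$, are sound.
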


The following Theorem complements Theorems~\ref{t4b} and ~\ref{t5c}.

 \begin{thm}\label{t4c} Let $c>1$ and $p,q,r$ be fixed such that $p,r>0$ and
$p\neq r$. Then there is $k_0\in\Z$ such that $\mu^{(k)}\in ID^0$
for all $k\in\Z$ with $k<k_0$.
\end{thm}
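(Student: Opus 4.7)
The plan is to split on the relative sizes of $p$ and $r$, and in the harder case on the arithmetic of $c$, then appeal to the already-established machinery of Sections 4--6. If $0<p<r$, Theorem~\ref{t4a}(iii) gives $\mu^{(k)}\in ID^0$ for all $k\in\Z$, so any $k_0$ works. Henceforth assume $0<r<p$. Then Theorem~\ref{t4a}(ii) yields $\mu^{(k)}\in ID\cup ID^0$ for every $k$, and the task reduces to producing $k_0$ with $\mu^{(k)}\notin ID$ whenever $k<k_0$.

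Case 1: $c^l\in\N$ for some $l\in\N$. Taking $l$ smallest such, set $\al=c^l\ges2$ and $\bt=\lceil|k|/l\rceil$. For every negative $k$ we are in the regime of Theorem~\ref{t6c}(ii). If $q=0$ that theorem directly gives $\mu^{(k)}\notin ID$. If $q>0$ I would invoke the necessary condition $q^{\al^{\bt}}>\al^{\bt/4}(r/p)$ provided by Corollary~\ref{cor-suf-nec}(i). Since $p,r>0$ forces $q\in[0,1)$ and $\al\ges2$, as $|k|\to\infty$ we have $\bt\to\infty$, hence $q^{\al^{\bt}}\to 0$ while $\al^{\bt/4}(r/p)\to\infty$, and the necessary condition must eventually fail.

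Case 2: $c^l\notin\N$ for every $l\in\N$. Theorem~\ref{t6c}(ii) never applies. The point is that at most one $j\in\N$ can satisfy $2c^j\in\N_{\mrm{odd}}$: writing $c^j=a'/b'$ in lowest terms, one checks that $2c^{nj}=2(a')^n/(b')^n\in\N$ forces $b'=2$ and $n=1$, and the subgroup structure of $\{l\in\N\cl c^l\in\Q\}$ rules out rational $c^{j'}$ for indices outside $j\N$. Let $j_0$ be this unique $j$ if it exists, and $j_0=0$ otherwise. For $|k|>j_0$ no integer $j\ges|k|$ satisfies $2c^j\in\N$, so Theorem~\ref{t6c}(i) applies and yields $\mu^{(k)}\in ID^0$ directly.

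Combining the two cases produces the required $k_0$. The main obstacle is Case 2, where ruling out Theorem~\ref{t6c}(iii) for arbitrarily large $|k|$ relies on the above uniqueness argument; Case 1 by contrast is a clean asymptotic comparison, the explicit estimate in Corollary~\ref{cor-suf-nec}(i) doing all the analytic work for us.
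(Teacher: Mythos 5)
Your proposal is correct and follows essentially the same route as the paper: reduce to $0<r<p$ via Theorem~\ref{t4a}, invoke Theorem~\ref{t6c}(i) when $2c^j\notin\N$ for all large $j$ (your uniqueness argument for $2c^j\in\N_{\rm odd}$ is just a more explicit version of the paper's finiteness remark), and in the case $c^l\in\N$ let $\beta=\lceil|k|/l\rceil\to\infty$ kill the criterion of Theorem~\ref{t6c}(ii). The only cosmetic difference is that you use the necessary condition $q^{\al^\beta}>\al^{\beta/4}(r/p)$ from Corollary~\ref{cor-suf-nec}(i), whereas the paper argues directly that $h_{\al,\beta}(q^{\al^\beta})\to0$; both rest on the same bounds from Proposition~\ref{p6a}.
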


\begin{proof}
By Theorem~\ref{t4a} it only remains to consider the case $0<r<p$.
Since a sequence $\{j_k, k\in \N\}$ of integers tending to $\infty$
such that $2c^{j_k} \in \N$ for all $k$ can only exist if $c^j\in
\N$ for some $j\in \N$, Theorem~\ref{t6c}~(i) gives the assertion
unless $c^j\in \N$ for some $j\in \N$. In the latter case, let
$\alpha$ and $l$ be defined defined as in Theorem~\ref{t6c}~(ii) and
$\beta_k := \lceil |k|/l\rceil$. Then $\beta_k\to \infty$ as
$k\to-\infty$ and hence $h_{\alpha,\beta_k}(q^{\alpha^{\beta_k}})\to
0$ as $k\to-\infty$ by \eqref{6-2}. In particular, \eqref{char-ii}
is violated for large enough $|k|$.
\end{proof}

\section{Symmetrizations}

In general, the symmetrization
$\sg^{\mrm{sym}}$ of a distribution $\sg$ is defined to be the distribution with
characteristic function $|\wh\sg(z)|^2$.
It is clear that
\begin{equation}\label{7-0}
\text{if $\sg\in ID$, then $\sg^{\mrm{sym}}\in ID$.}
\end{equation}
It follows from \eqref{1-8} that
\begin{equation}\label{7-1}
\wh\mu^{(k)\,\mrm{sym}}(z)=\wh\rh^{(k)\,\mrm{sym}}(z)\,\wh\mu^{(k)\,\mrm{sym}}
(c^{-1}z)
\end{equation}
for all $k\in\Z$, where $\wh\rh^{(k)\,\mrm{sym}}(z)$ and
$\wh\mu^{(k)\,\mrm{sym}}(z)$ denote the
characteristic functions of $\rh^{(k)\,\mrm{sym}}$ and $\mu^{(k)\,\mrm{sym}}$.
Thus $\mu^{(k)\,\mrm{sym}}$ is again $c^{-1}$-decomposable.
These symmetrizations have the
following remarkable property.

\begin{lem}\label{l7a}
Define $(c',p',q',r')=(c,r,q,p)$ and let $\rh^{\prime(k)}$ and $\mu^{\prime(k)}$
be the distributions corresponding to $\rh^{(k)}$ and $\mu^{(k)}$ with
$(c',p',q',r')$ used in place of $(c,p,q,r)$. Let $\rh^{\prime(k)\,\mrm{sym}}$
and $\mu^{\prime(k)\,\mrm{sym}}$ be their symmetrizations.
Then
\begin{align}
\rh^{\prime(k)\,\mrm{sym}}&=\rh^{(k)\,\mrm{sym}},\label{l7a1}\\
\mu^{\prime(k)\,\mrm{sym}}&=\mu^{(k)\,\mrm{sym}}\label{l7a2}
\end{align}
for $k\in\Z$.
\end{lem}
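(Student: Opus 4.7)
The plan is to compare the absolute values of the characteristic functions of $\rh^{(k)}$ and $\rh^{\prime(k)}$ (respectively $\mu^{(k)}$ and $\mu^{\prime(k)}$), and exploit the fact that the symmetrization of a distribution $\sg$ is determined by $|\wh\sg(z)|^2$. The single algebraic identity driving everything is
\[
|p+re^{i\tht}|^2=p^2+r^2+2pr\cos\tht,\qquad \tht\in\R,
\]
which is manifestly symmetric in the pair $(p,r)$.

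First I would prove \eqref{l7a1}. By \eqref{p2c2} we have
\[
\wh\rh^{(k)}(z)=\frac{p+re^{ic^{-k}z}}{1-qe^{iz}},\qquad
\wh\rh^{\prime(k)}(z)=\frac{r+pe^{ic^{-k}z}}{1-qe^{iz}},
\]
so the displayed identity with $\tht=c^{-k}z$ gives $|p+re^{ic^{-k}z}|^2=|r+pe^{ic^{-k}z}|^2$, and the denominators coincide since $q'=q$. Hence $|\wh\rh^{(k)}(z)|^2=|\wh\rh^{\prime(k)}(z)|^2$, i.e.\ $\wh{\rh^{(k)\,\mrm{sym}}}=\wh{\rh^{\prime(k)\,\mrm{sym}}}$, which proves \eqref{l7a1} by uniqueness of Fourier transforms.

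For \eqref{l7a2} I would apply the same reasoning factorwise to the infinite product representation \eqref{p2c3}. Indeed,
\[
|\wh\mu^{(k)}(z)|^2=\prod_{n=0}^{\infty}\frac{|p+re^{ic^{-k-n}z}|^2}{|1-qe^{ic^{-n}z}|^2},
\]
and by the identity above each numerator is invariant under the swap $(p,r)\mapsto(r,p)$, while each denominator depends only on $q=q'$ and $c=c'$. The same expression therefore results when $(c,p,q,r)$ is replaced by $(c',p',q',r')$, giving $|\wh\mu^{(k)}(z)|^2=|\wh\mu^{\prime(k)}(z)|^2$ and hence \eqref{l7a2}.

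There is no real obstacle here; the only mild point is justifying the factorwise manipulation of the infinite product, which is covered by the absolute convergence estimate already displayed at the end of the proof of Proposition~\ref{p2c} (so that the product defining $\wh\mu^{(k)}$ converges absolutely, and in particular the modulus passes through the product).
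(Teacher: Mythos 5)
Your argument is correct and is essentially the paper's own proof: the paper likewise derives \eqref{l7a1} from \eqref{p2c2} via the identity $\bigl|p+re^{ic^{-k}z}\bigr|=\bigl|r+pe^{ic^{-k}z}\bigr|$ (it factors out $e^{-ic^{-k}z}$ rather than expanding the square, an immaterial difference) and then obtains \eqref{l7a2} by passing to the infinite product representation, just as you do. Your remark on justifying the factorwise passage to moduli is fine but not even needed, since continuity of $|\cdot|$ applied to the limit of partial products already suffices.
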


\begin{proof}
It follows from \eqref{p2c2} that
\[
\wh\rh^{(k)\,\mrm{sym}}(z)=\left| \frac{p+re^{ic^{-k} z}}{1-qe^{iz}} \right|^2
=\left| \frac{pe^{-ic^{-k} z}+r}{1-qe^{iz}} \right|^2
=\left| \frac{r+pe^{ic^{-k} z}}{1-qe^{iz}} \right|^2 .
\]
Hence $\rh^{\prime(k)\,\mrm{sym}}$ and $\rh^{(k)\,\mrm{sym}}$ have an
identical characteristic function, that is, \eqref{l7a1} is true.
Then \eqref{l7a2} follows as in \eqref{p2b2}.
\end{proof}

We also use the following general result.

\begin{lem}\label{l7b}
Suppose that $\sg$ is a distribution on $\R$.

{\rm (i)} If $\sigma \in ID \cup ID^0$, then $\sigma^{\rm sym} \in
ID\cup ID^0$.

{\rm (ii)} If $\sigma\in ID^0$
 with quasi-L\'evy measure being concentrated on $(0,\infty)$,
then $\sg^{\rm{sym}} \in ID^0$.
\end{lem}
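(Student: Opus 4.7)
The plan is to compute $|\wh\sg(z)|^2$ directly from the quasi-L\'evy--Khintchine representation of $\sg$ and read off the quasi-L\'evy--Khintchine triplet of $\sg^{\mrm{sym}}$. Let $\sg$ have representation \eqref{d1a1} with parameters $(\gm, a, \nu_\sg)$. Since $\overline{\wh\sg(z)}=\wh\sg(-z)$, the change of variable $y=-x$ in the compensated L\'evy integral (using $1_{[-1,1]}(-y)=1_{[-1,1]}(y)$) yields
\[
\overline{\wh\sg(z)}=\exp\left[-i\gm z - az^2 + \int_{\R}(e^{izy}-1-izy\,1_{[-1,1]}(y))\,\nu_\sg^{-}(dy)\right],
\]
where $\nu_\sg^{-}(B):=\nu_\sg(-B)$. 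Multiplying the two exponentials and combining the integrals exhibits $\sg^{\mrm{sym}}$ with the representation \eqref{d1a1} having triplet $(0,2a,\nu_\sg+\nu_\sg^{-})$. Since $|\nu_\sg+\nu_\sg^{-}|\leq|\nu_\sg|+|\nu_\sg^{-}|$, the integrability and $\{0\}$-null requirements for the quasi-L\'evy measure are automatic, so $\sg^{\mrm{sym}}$ is quasi-infinitely divisible and (i) follows.

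For (ii), assume $\sg\in ID^0$ with $\nu_\sg$ concentrated on $(0,\infty)$. By the characterization of infinite divisibility recalled just after \eqref{d1a1}, at least one of the following holds: $a<0$, or the negative part of $\nu_\sg$ in its Jordan decomposition is non-zero. If $a<0$, then the Gaussian coefficient $2a$ of $\sg^{\mrm{sym}}$ is negative and hence $\sg^{\mrm{sym}}\not\in ID$. Otherwise, choose a Borel set $B\subseteq(0,\infty)$ with $\nu_\sg(B)<0$; because $\nu_\sg^{-}$ is concentrated on $(-\infty,0)$ it vanishes on $B$, whence $(\nu_\sg+\nu_\sg^{-})(B)=\nu_\sg(B)<0$, so the negative part of $\nu_{\sg^{\mrm{sym}}}$ is non-zero and $\sg^{\mrm{sym}}\not\in ID$. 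Combining either alternative with (i) gives $\sg^{\mrm{sym}}\in ID^0$.

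The only real obstacle is the careful bookkeeping of the reflected representation --- in particular tracking the compensation term $-izy\,1_{[-1,1]}(y)$ correctly under $y=-x$. The key conceptual point driving (ii) is that once $\nu_\sg$ sits on the positive half-line, $\nu_\sg$ and $\nu_\sg^{-}$ are concentrated on disjoint half-lines, so no cancellation in their sum can wipe out the Jordan negative part of $\nu_\sg$; thus the ``defect'' that forces $\sg\not\in ID$ is inherited by the symmetrization.
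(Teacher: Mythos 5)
Your proposal is correct and follows essentially the same route as the paper: read off the triplet $(0,\,2a,\,\nu_\sg+\nu_\sg(-\cdot))$ of $\sg^{\mrm{sym}}$ from \eqref{d1a1}, then observe that when $\nu_\sg$ is concentrated on $(0,\infty)$ its Jordan negative part (or the sign of $a$) survives the symmetrization, forcing $\sg^{\mrm{sym}}\notin ID$. The paper states this more tersely, but the argument is identical.
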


\begin{proof}
(i) It is clear that if $\sigma$ satisfies \eqref{d1a1} with
$\gamma$, $a$ and $\nu_\sigma$, then $\sigma^{\rm{sym}}\in ID \cup
ID^0$ satisfying \eqref{d1a1} with $\gamma^{\rm sym} = 0, a^{\rm
sym} = 2a$ and $\nu_{\sg^{\mrm{sym}}}$ given by $
\nu_{\sg^{\mrm{sym}}}(B)=\nu_{\sg}(B)+\nu_{\sg}(-B)$ for  $B\in\mcal
B(\R)$.

(ii) If $\sigma \in ID^0$, then $a<0$ or $\nu_\sigma$ has nontrivial
negative part. Hence it follows from the proof of (i) that if $a<0$,
then $a^{\rm sym} <0$, and if $\nu_\sigma$ has nontrivial negative
part and is concentrated on $(0,\infty)$, then $\sg^{\mrm{sym}}$ has
non-trivial negative part. In both cases it holds $\sigma^{\rm sym}
\in ID^0$.
\end{proof}

\begin{thm}\label{t7A} Let $k\in\Z$.

{\rm(i)} If $p=0$ or if $r=0$, then $\rh^{(k)\,\mrm{sym}}$ and
$\mu^{(k)\,\mrm{sym}}$ are in $ID$.

{\rm(ii)} If $p\neq r$, then  $\rh^{(k)\,\mrm{sym}}$ and $\mu^{(k)\,\mrm{sym}}$
 are in $ID \cup ID^0$.

{\rm(iii)} If $p=r$, then  $\rh^{(k)\,\mrm{sym}}$ and
 $\mu^{(k)\,\mrm{sym}}$ are in $ID^{00}$.
\end{thm}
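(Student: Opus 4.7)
The plan is to reduce everything to the classification of $\rh^{(k)}$ and $\mu^{(k)}$ given in Theorem~\ref{t4a}, and then invoke the general implications \eqref{7-0} and Lemma~\ref{l7b}, together with the symmetry $(c,p,q,r)\leftrightarrow(c,r,q,p)$ provided by Lemma~\ref{l7a}.

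For part (i), if $p=0$ or $r=0$, then by Theorem~\ref{t4a}(i) both $\rh^{(k)}$ and $\mu^{(k)}$ belong to $ID$, and so by \eqref{7-0} their symmetrizations do as well.

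For part (ii), split into the two cases $0<r<p$ and $0<p<r$. In the first case, Theorem~\ref{t4a}(ii) gives $\rh^{(k)},\mu^{(k)}\in ID\cup ID^0$, so an application of Lemma~\ref{l7b}(i) yields $\rh^{(k)\,\mrm{sym}},\mu^{(k)\,\mrm{sym}}\in ID\cup ID^0$. In the second case, $0<p<r$, I introduce the swapped parameters $(c',p',q',r')=(c,r,q,p)$ of Lemma~\ref{l7a}, which satisfy $0<r'<p'$. The previous argument applied to the primed parameters gives $\rh^{\prime(k)\,\mrm{sym}},\mu^{\prime(k)\,\mrm{sym}}\in ID\cup ID^0$, and by \eqref{l7a1}--\eqref{l7a2} these coincide with the original $\rh^{(k)\,\mrm{sym}}$ and $\mu^{(k)\,\mrm{sym}}$.

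For part (iii), if $p=r$, then by Theorem~\ref{t4a}(iv) we have $\wh\rh^{(k)}(c^k\pi)=0$ and $\wh\mu^{(k)}(c^k\pi)=0$ (cf.\ the proof of (iv) which uses \eqref{p2c2} and \eqref{1-8}). Hence
\[
\wh{\rh^{(k)\,\mrm{sym}}}(c^k\pi)=|\wh\rh^{(k)}(c^k\pi)|^2=0,\qquad
\wh{\mu^{(k)\,\mrm{sym}}}(c^k\pi)=|\wh\mu^{(k)}(c^k\pi)|^2=0.
\]
Since a quasi-infinitely divisible characteristic function has the exponential form \eqref{d1a1} and hence never vanishes, both symmetrizations lie in $ID^{00}$.

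The only subtle point is the use of Lemma~\ref{l7b}(i) in part (ii): one must verify that when $\rh^{(k)}$ (or $\mu^{(k)}$) is written as in \eqref{d1a1}, the resulting quasi-L\'evy measure of the symmetrization, namely $\nu_{\sg^{\mrm{sym}}}(B)=\nu_\sg(B)+\nu_\sg(-B)$, indeed has finite variation against $x^2\wedge 1$, which follows immediately from the corresponding property for $\nu_\sg$. No new calculations beyond those already recorded in the proofs of Theorem~\ref{t4a} and Lemmas~\ref{l7a}--\ref{l7b} are needed; this is the main reason the argument is short.
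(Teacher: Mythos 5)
Your argument is correct and essentially the paper's own proof: (i) and (ii) follow from Theorem~\ref{t4a}(i)--(iii) together with \eqref{7-0} and Lemma~\ref{l7b}(i), and (iii) follows because $\wh\rh^{(k)}$ and $\wh\mu^{(k)}$ vanish at $z=c^k\pi$ when $p=r$, so the symmetrized characteristic functions vanish as well and cannot have the non-vanishing exponential form \eqref{d1a1}. The only (harmless) deviation is that in the case $0<p<r$ you detour through the parameter swap of Lemma~\ref{l7a}, whereas the paper applies Lemma~\ref{l7b}(i) directly to Theorem~\ref{t4a}(iii), which already places $\rh^{(k)}$ and $\mu^{(k)}$ in $ID^0\subset ID\cup ID^0$.
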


\begin{proof}
(i) and (ii) are clear from Theorem \ref{t4a} (i)-(iii) and
Lemma~\ref{l7b}, while (iii) follows from the fact that
$\wh\rh^{(k)}(z)$ and hence $\wh\mu^{(k)}(z)$ have zero points for
\(p=r\) by \eqref{p2c2}.
\end{proof}

In studying infinite divisibility properties of $\rh^{(k)\,\mrm{sym}}$ and
$\mu^{(k)\,\mrm{sym}}$, we will only consider whether they are infinitely divisible
or not in the case where
\begin{equation}\label{7-2}
\text{$p>0$,\quad $r>0$,\quad and\quad $p\neq r$},
\end{equation}
as we have Theorem \ref{t7A}.

\begin{thm}\label{t7K} Let $k\in \mathbb{Z}$ and assume
\eqref{7-2}. Let $\rh^{\prime(k)}$ and $\mu^{\prime(k)}$ be defined
as in Lemma~\ref{l7a}.

{\rm (i)} $\rho^{(k)\,\mrm{sym}}\in ID$ if and only if
$\rho^{(k)}\in ID$ or  $\rh^{\prime(k)}\in ID$.

{\rm (ii)} $\mu^{(k)\,\mrm{sym}}\in ID$ if only if $\mu^{(k)}\in ID$
or $\mu^{\prime(k)}\in ID$.
\end{thm}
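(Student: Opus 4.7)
My plan is to combine three ingredients already established: Lemma~\ref{l7a} (which says the symmetrizations are invariant under the swap $(p,r)\leftrightarrow(r,p)$), the support information in Theorem~\ref{t4a}, and Lemma~\ref{l7b}(ii). Under assumption \eqref{7-2}, exactly one of $0<r<p$ or $0<p<r$ holds, and since the statement to be proved is itself symmetric under the swap (it mentions $\rho^{(k)}$ and $\rho^{\prime(k)}$ symmetrically, and similarly for $\mu$), I may assume without loss of generality that $0<r<p$. Theorem~\ref{t4a}(ii) then gives $\rho^{(k)},\mu^{(k)}\in ID\cup ID^0$ with quasi-L\'evy measures concentrated on $(0,\infty)$, and since the primed parameters satisfy $0<p'<r'$, Theorem~\ref{t4a}(iii) gives $\rho^{\prime(k)},\mu^{\prime(k)}\in ID^0$.

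For the ``if'' direction I would argue as follows. If $\rho^{(k)}\in ID$, then \eqref{7-0} gives $\rho^{(k)\,\mrm{sym}}\in ID$. If instead $\rho^{\prime(k)}\in ID$, then \eqref{7-0} applied to $\rho^{\prime(k)}$ gives $\rho^{\prime(k)\,\mrm{sym}}\in ID$, and by Lemma~\ref{l7a} this distribution coincides with $\rho^{(k)\,\mrm{sym}}$. (In the WLOG case $0<r<p$ the second possibility is vacuous, but keeping both explicit preserves the symmetry.) The same reasoning handles $\mu^{(k)\,\mrm{sym}}$, establishing the ``if'' half of both (i) and (ii).

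For the ``only if'' direction of (i), suppose $\rho^{(k)\,\mrm{sym}}\in ID$. In the WLOG case $0<r<p$, the observation above shows $\rho^{\prime(k)}\notin ID$, so the disjunction ``$\rho^{(k)}\in ID$ or $\rho^{\prime(k)}\in ID$'' reduces to showing $\rho^{(k)}\in ID$. If instead $\rho^{(k)}\in ID^0$, then since its quasi-L\'evy measure is concentrated on $(0,\infty)$, Lemma~\ref{l7b}(ii) yields $\rho^{(k)\,\mrm{sym}}\in ID^0$, contradicting the standing assumption. Hence $\rho^{(k)}\in ID$. Part (ii) is proved by the identical argument with $\mu$ in place of $\rho$, using Theorem~\ref{t4a}(ii)--(iii) for $\mu^{(k)}$.

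The main delicate point I anticipate is setting up the WLOG reduction cleanly, i.e.\ explaining why Lemma~\ref{l7a} together with the symmetric form of the theorem's conclusion really allows one to restrict to $0<r<p$, and verifying that in this case the support hypothesis of Lemma~\ref{l7b}(ii) is met. Once that is articulated, the actual ``only if'' step is just a one-line application of Lemma~\ref{l7b}(ii), so the whole argument is short; no computation with characteristic functions or L\'evy triplets beyond what is already contained in Lemmas~\ref{l7a} and \ref{l7b} is needed.
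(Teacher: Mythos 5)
Your argument is correct and is essentially the paper's own proof: both the ``if'' half (via \eqref{7-0} and Lemma~\ref{l7a}) and the ``only if'' half (via Theorem~\ref{t4a}(ii), the concentration of the quasi-L\'evy measure on $(0,\infty)$, and Lemma~\ref{l7b}(ii)) follow the same chain of lemmas the paper uses. The only difference is organizational: you package the two parameter cases as a WLOG reduction using the swap symmetry from Lemma~\ref{l7a}, while the paper runs the identical reasoning as an explicit case split between $r<p$ and $r>p$.
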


\begin{proof}
The \lq if' part of (i) follows from \eqref{7-0} and \eqref{l7a1}.
To see the \lq only if' part, suppose that $\rh^{(k)\,\mrm{sym}}
\in ID$.  If $r<p$, then $\rh^{(k)}\in ID\cup ID^0$
with quasi-L\'evy measure being concentrated on $(0,\infty)$ by
Theorem~\ref{t4a}~(ii), and $\rh^{(k)}\in ID$ from Lemma~\ref{l7b}~(ii).
If $r>p$, then $r'<p'$ and the same reasoning for $\rh^{\prime (k)}$
combined with \eqref{l7a1} shows that $\rh^{\prime (k)}\in ID$.
Hence (i) is true.  We obtain (ii) in the same way.
\end{proof}

We can now give necessary and sufficient conditions for
$\rho^{(k)\,\mrm{sym}}$ and $\mu^{(k)\,\mrm{sym}}$ being infinitely
divisible.  For $k=0$ in (i) below, the corresponding conditions
were already obtained in Theorem~2.2 of \cite{LS}, but thanks to
Theorem~\ref{t7K}, a new and much shorter proof can now be given for
that part of Theorem~2.2 in \cite{LS}.

\begin{thm}\label{t7B} Let $k\in \mathbb{Z}$ and assume \eqref{7-2}.

{\rm(i)} Let $k=0$. If $(r/p)\land (p/r)\leq q$, then
$\rh^{(0)\,\mrm{sym}}, \mu^{(0)\,\mrm{sym}}\in ID$. Conversely, if
$(r/p)\land (p/r)> q$, then $\rh^{(0)\,\mrm{sym}},
\mu^{(0)\,\mrm{sym}}\in ID^0$.

{\rm (ii)} Let $k>0$. Then $\rh^{(k)\,\mrm{sym}}\in ID$ if and
only if $c^k=2$ and $(r/p)^2 \land (p/r)^2\leq q$.

{\rm (iii)} Let $k>0$. Then $\mu^{(k)\,\mrm{sym}}\in ID$ if and
only if one of the following holds: {\rm(a)}~$(r/p)\land(p/r)\leq
q$; {\rm(b)} $c^l=2$ for some $l\in\{ 1,2,\ldots,k\}$
 and $(r/p)^2 \land (p/r)^2\leq q$.

 {\rm (iv)} Let $k<0$.
Then $\rh^{(k)\,\mrm{sym}}\in ID$ if and only if $2c^{|k|}\in\N$ and
$q^l\geq (l/2)[(r/p)^2\land (p/r)^2]$ for $l=2c^{|k|}$.

{\rm (v)} Let $k<0$. If $2c^j \not\in \N$ for
all integers $j$ satisfying $j\geq |k|$, then $\mu^{(k)\,\mrm{sym}}
\in ID^0$.

 {\rm (vi)} Let $k<0$. Suppose that
$c^j\in\N$ for some $j\in\N$. Let $l$ be the smallest of such $j$
and let $\al=c^{l}$, $\beta := \lceil |k|/l\rceil$ and
$h_{\al,\beta}$ be defined by \eqref{6-2}. Then
$\mu^{(k)\,\mrm{sym}}\in ID$  if and only if $q>0$ and
$h_{\al,\beta}(q^{\al^\beta})\geq (r/p)\land (p/r)$.

{\rm (vii)} Let $k<0$. Suppose that
$2c^j\in\N_{\rm odd}$ for some $j\in\N$ with $j\geq |k|$. Then $j$
is unique. Let $\alpha = c^j$ and suppose that $2\alpha \geq 5$.
Then $\mu^{(k)\,\mrm{sym}}\in ID$  if and only if 
$q^{2\alpha} + ((r/p)\land (p/r))^{2\alpha}
\geq \alpha ((r/p)\land (p/r))^2$.
\end{thm}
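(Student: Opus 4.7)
The plan is to reduce each of the seven parts to a combination of Theorem~\ref{t7K} with the corresponding characterizations of $\rh^{(k)}\in ID$ or $\mu^{(k)}\in ID$ already established in Sections 4--6. Under assumption \eqref{7-2}, exactly one of $r<p$ or $p<r$ holds, and the swap $(c',p',q',r')=(c,r,q,p)$ of Lemma~\ref{l7a} interchanges these two cases while leaving every arithmetic hypothesis on $c$ untouched. Consequently, applying a previous criterion to $\rh^{(k)}$ (valid when $0<r<p$) and then to $\rh^{\prime(k)}$ (valid when $0<p<r$) produces the same inequality with $r/p$ replaced by $p/r$; combining the two disjunctively via Theorem~\ref{t7K} yields precisely the condition with $r/p$ replaced by $(r/p)\land(p/r)$.

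Concretely, I would dispatch the \lq if' directions as follows: (i) via Proposition~\ref{p5a}, (ii) via Theorem~\ref{t5a}, (iii) via Theorem~\ref{t5b}, (iv) via Theorem~\ref{t6a}, (vi) via Theorem~\ref{t6c}~(ii), and (vii) via Theorem~\ref{t6c}~$(\mrm{iii})_1$. In each case the condition on $c$ ($c^k=2$, $c^l=2$ for some $l\in\{1,\ldots,k\}$, $2c^{|k|}\in\N$, $c^l\in\N$, $2c^j\in\N_{\mrm{odd}}$) is symmetric in $p$ and $r$, and only the $r/p$ appearing in the inequality needs to be traded for $(r/p)\land(p/r)$. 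For the \lq only if' direction I combine the disjunctive criterion of Theorem~\ref{t7K} with the observation that whichever of the two candidates violates the $0<r<p$ hypothesis automatically lies in $ID^0$ by Theorem~\ref{t4a}~(iii), so it cannot contribute a new $ID$ distribution; the surviving candidate is then forced into the stated inequality by the cited criterion applied with either $(p,r)$ or $(r,p)$, whichever makes sense. The $ID^0$-assertion in part~(i) (the case $(r/p)\land(p/r)>q$) is obtained by combining the resulting ``$\rh^{(0)\,\mrm{sym}}\not\in ID$'' with Theorem~\ref{t7A}~(ii).

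Part~(v) is the only part whose hypothesis on $c$ is already invariant under the $p\leftrightarrow r$ swap, so the trick of transferring to the primed side does not generate any new information. Instead, whichever of $\mu^{(k)}$ and $\mu^{\prime(k)}$ satisfies its own $0<r<p$ (respectively $0<r'<p'$) hypothesis is thrown into $ID^0$ by Theorem~\ref{t6c}~(i), and the other is in $ID^0$ by Theorem~\ref{t4a}~(iii). Consequently neither is in $ID$, so Theorem~\ref{t7K}~(ii) gives $\mu^{(k)\,\mrm{sym}}\not\in ID$, and Theorem~\ref{t7A}~(ii) then promotes this to $\mu^{(k)\,\mrm{sym}}\in ID^0$.

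The hard part is purely bookkeeping: for each of (i)--(iv), (vi), (vii) I must verify that the inequality characterizing $\rh^{(k)}\in ID$ or $\mu^{(k)}\in ID$ in terms of $r/p$ really does transform under $p\leftrightarrow r$ into the \emph{same} inequality with $p/r$ in place of $r/p$ (the roles of $q$ and the arithmetic conditions on $c$ being unaffected), so that the disjunction in Theorem~\ref{t7K} collapses to a single inequality involving $(r/p)\land(p/r)$. I expect no analytic difficulty beyond a careful case split between $r<p$ and $p<r$, and the systematic use of Theorems~\ref{t7K} and \ref{t7A} to bridge the gap between ``not in $ID$'' and ``in $ID^0$''.
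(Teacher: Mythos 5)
Your proposal is correct and follows essentially the same route as the paper: the paper's proof likewise reduces everything to Theorem~\ref{t7K} together with Theorem~\ref{t4a} and the unsymmetrized criteria (Proposition~\ref{p5a} for (i), Theorem~\ref{t5a} for (ii), Theorem~\ref{t5b} for (iii), Theorem~\ref{t6a} for (iv), Theorem~\ref{t6c} for (v)--(vii)), with the $p\leftrightarrow r$ swap of Lemma~\ref{l7a} and Theorem~\ref{t4a}~(iii) eliminating the candidate with the wrong parameter ordering, exactly as you describe. Your handling of part~(v) and of the $ID^0$ assertion via Theorem~\ref{t7A}~(ii) also matches the paper's intended argument.
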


\begin{proof}
All assertions are immediate consequences of Theorem~\ref{t7K},
Theorem~\ref{t4a}, and the corresponding results obtained earlier.
For (i), use Proposition~\ref{p5a}, for (ii) Theorem~\ref{t5a}, for
(iii) Theorem~\ref{t5b}, for (iv) Theorem~\ref{t6a}, and for (v) --
(vii) use
 Theorem~\ref{t6c}.
\end{proof}

Conditions for $\mu^{(k)\,\mrm{sym}}\in ID$ when $2c^j=3$ with
$j,-k\in \N$ and $j\geq |k|$ can be written down similarly as in
(vii) above with the aid of Theorem~\ref{t6c}~$\mbox{(iii)}_2$.

\begin{cor}\label{c7a}
For each $k\in\Z\setminus \{0\}$, parameters $c,p,q,r$ exist such
that $\mu^{(k)\,\mrm{sym}}\in ID$ and $\rho^{(k)\,\mrm{sym}}\in
ID^0$.
\end{cor}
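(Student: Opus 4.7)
The plan is to leverage the earlier Corollaries~\ref{c5a} and~\ref{c6a}, which already produce, for each $k\in\Z\setminus\{0\}$, parameters $(c,p,q,r)$ with $0<r<p$ such that $\mu^{(k)}\in ID$ while $\rh^{(k)}\in ID^0$. I would simply show that the \emph{same} parameters witness the claim for the symmetrizations.

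First, since $\mu^{(k)}\in ID$ under these parameters, Theorem~\ref{t7K}~(ii) immediately yields $\mu^{(k)\,\mrm{sym}}\in ID$, disposing of the easier half.

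The substantive step is to show $\rh^{(k)\,\mrm{sym}}\in ID^0$. By Theorem~\ref{t7A}~(ii), the condition $p\neq r$ (guaranteed by $0<r<p$) places $\rh^{(k)\,\mrm{sym}}$ in $ID\cup ID^0$, so it suffices to rule out $\rh^{(k)\,\mrm{sym}}\in ID$. By Theorem~\ref{t7K}~(i), this requires showing that \emph{both} $\rh^{(k)}\not\in ID$ and $\rh^{\prime(k)}\not\in ID$. The first holds by construction, since $\rh^{(k)}\in ID^0$. For the second, observe that the primed parameters are $(c',p',q',r')=(c,r,q,p)$, and our choice satisfies $0<r<p$, i.e.\ $0<p'<r'$. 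Hence Theorem~\ref{t4a}~(iii) applied to the primed parameters gives $\rh^{\prime(k)}\in ID^0$, in particular $\rh^{\prime(k)}\not\in ID$.

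Combining, $\rh^{(k)\,\mrm{sym}}\not\in ID$ and so $\rh^{(k)\,\mrm{sym}}\in ID^0$, together with $\mu^{(k)\,\mrm{sym}}\in ID$, completing the proof. There is no real obstacle here: all the difficult work was already done in Corollaries~\ref{c5a} and~\ref{c6a} (existence of parameters separating $ID$ for $\mu^{(k)}$ from $ID^0$ for $\rh^{(k)}$), and the symmetrization step is purely formal via Theorems~\ref{t4a}, \ref{t7A}, and~\ref{t7K}.
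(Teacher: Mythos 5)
Your proof is correct, and it reaches the conclusion by a slightly different (and arguably cleaner) route than the paper. The paper deduces Corollary~\ref{c7a} directly from the explicit parameter characterizations in Theorem~\ref{t7B}: one simply picks $c,p,q,r$ satisfying the condition for $\mu^{(k)\,\mrm{sym}}\in ID$ while violating the condition for $\rh^{(k)\,\mrm{sym}}\in ID$ (e.g.\ for $k>0$ take $r\le pq$, $0<r<p$ and $c^l\neq 2$ for all $l$). You instead bypass the explicit inequalities: you reuse the parameters already produced in Corollaries~\ref{c5a} and~\ref{c6a}, get $\mu^{(k)\,\mrm{sym}}\in ID$ from the easy direction of Theorem~\ref{t7K}~(ii) (equivalently \eqref{7-0}), and rule out $\rh^{(k)\,\mrm{sym}}\in ID$ via Theorem~\ref{t7K}~(i) together with the observation that the primed parameters $(c,r,q,p)$ satisfy $0<p'<r'$, so $\rh^{\prime(k)}\in ID^0$ automatically by Theorem~\ref{t4a}~(iii); Theorem~\ref{t7A}~(ii) then upgrades ``not $ID$'' to $ID^0$. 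Both arguments ultimately rest on Theorem~\ref{t7K} (the paper's Theorem~\ref{t7B} is itself an immediate consequence of it), but yours buys the extra information that the very same parameters witnessing Corollaries~\ref{c5a} and~\ref{c6a} also witness Corollary~\ref{c7a}, with no inequality checking. One small point you assert without justification: that the parameters from Corollaries~\ref{c5a} and~\ref{c6a} satisfy $0<r<p$. This is true, and worth a line: by Theorem~\ref{t4a}, the cases $p=0$ or $r=0$, $0<p<r$, and $p=r>0$ all make $\mu^{(k)}\in ID$ and $\rh^{(k)}\in ID^0$ simultaneously impossible, so any witnessing parameters necessarily fall in the case $0<r<p$, which also guarantees the standing assumption \eqref{7-2} needed for Theorems~\ref{t7A} and~\ref{t7K}.
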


The proof is immediate from Theorem~\ref{t7B}.  
Corollary~\ref{c7a} gives symmetric examples of  infinitely divisible
distributions which are $b$-decomposable without infinitely divisible
factor, the phenomenon first observed by Niedbalska-Rajba~\cite{Ni}.

The next corollary gives further examples of a phenomenon first
observed by Gnedenko and Kolmogorov~\cite{GK}, p.~82. Its proof is
immediate from Theorem~\ref{t4a}, Proposition \ref{p5a}, Theorems
\ref{t5a}, \ref{t5b}, and \ref{t7B}.

\begin{cor}
For each $k\in\Z$, there is a case where $\rh^{(k)\,\mrm{sym}}\in
ID$ with $\rh^{(k)}\in ID^{0}$ and there is a case where
$\mu^{(k)\,\mrm{sym}}\in ID$ with $\mu^{(k)}\in ID^{0}$.
\end{cor}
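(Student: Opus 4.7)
The plan is to combine Lemma~\ref{l7a} and Theorem~\ref{t7K} with the unconditional classification of Theorem~\ref{t4a}~(iii). Concretely, I would work throughout in the regime $0<p<r$. By Theorem~\ref{t4a}~(iii) this alone forces $\rh^{(k)}\in ID^{0}$ and $\mu^{(k)}\in ID^{0}$ for \emph{every} $k\in\Z$, so all that remains is to arrange $\rh^{(k)\,\mrm{sym}}\in ID$ and $\mu^{(k)\,\mrm{sym}}\in ID$. Since Theorem~\ref{t7K} gives $\rh^{(k)\,\mrm{sym}}\in ID$ (resp.\ $\mu^{(k)\,\mrm{sym}}\in ID$) as soon as the primed distribution $\rh^{\prime(k)}$ (resp.\ $\mu^{\prime(k)}$) of Lemma~\ref{l7a} is in $ID$, and the primed triple $(c,r,q,p)$ satisfies $0<r'<p'$, the earlier $ID$-criteria (Proposition~\ref{p5a}, Theorems~\ref{t5a}, \ref{t5b}, \ref{t6a}, \ref{t6c}) apply to the primed distributions. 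The task therefore reduces, case by case in $k$, to exhibiting concrete parameters.

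For $k=0$ I would take $q=1/2$, $p=1/10$, $r=2/5$. Then $p\leq rq$, and Proposition~\ref{p5a} applied to the primed triple yields simultaneously $\rh^{\prime(0)}\in ID$ and $\mu^{\prime(0)}\in ID$. For $k>0$ I would keep the same $(p,q,r)$ and set $c=2^{1/k}$, so $c^k=2$ and $p^{2}\leq r^{2}q$. Then Theorem~\ref{t5a} gives $\rh^{\prime(k)}\in ID$, while condition (a) of Theorem~\ref{t5b} gives $\mu^{\prime(k)}\in ID$. In both subcases the constants are chosen once and work for the two parts of the corollary at the same time.

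For $k<0$ I would take $c=2$, which makes $2c^{|k|}=2^{|k|+1}\in\N$ and puts us in case~(ii) of Theorem~\ref{t6c} with $l=1$, $\alpha=2$, $\beta=|k|$. The conditions to be verified then become
\[
q^{2^{|k|+1}}\geq 2^{|k|}(p/r)^{2}\quad\text{(Theorem~\ref{t6a})}\qquad\text{and}\qquad h_{2,|k|}(q^{2^{|k|}})\geq p/r\quad\text{(Theorem~\ref{t6c}~(ii))}.
\]
Both are simultaneously satisfied by sending $q\uparrow 1$ and $p/r\downarrow 0$ inside the simplex $p+q+r=1$, $0<p<r$; an explicit admissible choice is $q=1-\ep$, $r=\ep-\ep^{N}$, $p=\ep^{N}$ with $\ep$ small and $N$ large in terms of $|k|$, using that $h_{2,|k|}$ is continuous on $(0,1]$ with $h_{2,|k|}(1)>0$ by Proposition~\ref{p6a}. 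This limit argument for negative $k$ is the only step that is not an immediate substitution into an earlier theorem; the cases $k\geq 0$ reduce to checking a handful of numerical inequalities. No new ideas are needed beyond organising these consequences of Theorem~\ref{t7K}, Theorem~\ref{t4a}, and the $ID$-characterisations already established.
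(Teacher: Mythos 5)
Your proposal is correct and follows essentially the same route as the paper: fix $0<p<r$ so that Theorem~\ref{t4a}~(iii) forces $\rh^{(k)},\mu^{(k)}\in ID^{0}$, and then choose $c,q,p/r$ so that the symmetrizations are infinitely divisible, which you obtain via Theorem~\ref{t7K} applied to the primed distributions and the $ID$-criteria of Proposition~\ref{p5a} and Theorems~\ref{t5a}, \ref{t5b}, \ref{t6a}, \ref{t6c} — exactly the content of Theorem~\ref{t7B}, which the paper cites directly. Your explicit parameter choices (including the $k<0$ choice with $c=2$ and $p/r$ small) all check out, so no gap.
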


Let us give the analogues of Theorems \ref{t4b}, \ref{t4c}, and \ref{t5c}.

\begin{thm}\label{t7G}
Let $k\in\Z$ and the parameters $c, p,q,r$ be fixed.
 If $\mu^{(k)\,\mrm{sym}}\in ID$, then $\mu^{(k+1)\,\mrm{sym}}\in ID$.
\end{thm}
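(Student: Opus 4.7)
The plan is to mirror the proof of Theorem~\ref{t4b} but at the level of symmetrizations. Starting from the identity \eqref{p2c6},
\[
\wh\mu^{(k+1)}(z) = \wh\mu^{(k)}(c^{-1}z)\,\frac{1-q}{1-qe^{iz}},
\]
I would take modulus squared on both sides to obtain
\[
\wh\mu^{(k+1)\,\mrm{sym}}(z) = \wh\mu^{(k)\,\mrm{sym}}(c^{-1}z)\,\left|\frac{1-q}{1-qe^{iz}}\right|^{2}.
\]

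Next I would observe that the second factor is itself the characteristic function of an infinitely divisible distribution: if $q=0$ it equals $1=\wh{\dl_0}(z)$, and if $q>0$ it is the characteristic function of the symmetrization of a geometric distribution with parameter $q$, which is infinitely divisible since the geometric distribution itself is infinitely divisible and \eqref{7-0} preserves infinite divisibility under symmetrization.

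Finally, under the hypothesis that $\mu^{(k)\,\mrm{sym}}\in ID$, the map $z\mapsto \wh\mu^{(k)\,\mrm{sym}}(c^{-1}z)$ is the characteristic function of the image of $\mu^{(k)\,\mrm{sym}}$ under multiplication by $c^{-1}$, which is again infinitely divisible. Since the product of two infinitely divisible characteristic functions is infinitely divisible, we conclude $\mu^{(k+1)\,\mrm{sym}}\in ID$.

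There is no real obstacle here; the argument is essentially a one-line consequence of \eqref{p2c6} combined with \eqref{7-0} and the fact that the symmetrized geometric factor is infinitely divisible.
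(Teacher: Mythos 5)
Your proposal is correct and follows essentially the same route as the paper: the paper also takes the modulus squared of \eqref{p2c6} to get $\wh\mu^{(k+1)\,\mrm{sym}}(z)=\wh\mu^{(k)\,\mrm{sym}}(c^{-1}z)\left|\frac{1-q}{1-qe^{iz}}\right|^2$ and notes that the second factor is an infinitely divisible characteristic function (the symmetrized geometric, or $\dl_0$ when $q=0$). Your added remarks on the scaling of $\mu^{(k)\,\mrm{sym}}$ and the product of infinitely divisible characteristic functions are just the routine details the paper leaves implicit.
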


\begin{proof}
From \eqref{p2c6} follows
\[
\wh\mu^{(k+1)\,\mrm{sym}}(z)=\wh\mu^{(k)\,\mrm{sym}}(c^{-1}z)
\left|\frac{1-q}{1-qe^{iz}}\right|^2,
\]
and the second factor in the right-hand side is an infinitely divisible
characteristic function.
\end{proof}

\begin{thm}\label{t7H}
Let $c>1$ and the parameters $p,q,r$ be fixed such that $p>0$ and
$r>0$. Then there is $k_0\in\Z$ such that, for every $k\in\Z$ with
$k<k_0$, $\mu^{(k)\,\mrm{sym}}\not\in ID$.
\end{thm}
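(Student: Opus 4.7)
The plan is to mirror the proof of Theorem~\ref{t4c}, substituting Theorem~\ref{t7B} for Theorems~\ref{t4a}, \ref{t6a}, and \ref{t6c}. The reduction to $k<0$ and the case analysis in $c$ are essentially the same; only the target condition changes, with $r/p$ replaced by $\eta := (r/p)\land(p/r)$.

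First I dispose of the degenerate symmetric case: if $p=r$, then Theorem~\ref{t7A}(iii) gives $\mu^{(k)\,\mrm{sym}} \in ID^{00}$ for every $k\in\Z$, so any $k_0$ works. Hence I may assume $p\neq r$, and set $\eta := (r/p)\land(p/r) \in (0,1)$.

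Next I observe a short arithmetic fact used to invoke Theorem~\ref{t7B}(v): if $c^j\notin\N$ for every $j\in\N$, then $2c^j\in\N$ holds for at most one $j\in\N$. Indeed, if $l_0$ is the smallest $j\in\N$ with $c^j\in\Q$, write $c^{l_0}=a/b$ in lowest terms with $b\ge 2$; then $c^{nl_0}=a^n/b^n$ is already in lowest terms, so $2c^{nl_0}\in\N$ forces $b=2$ and $n=1$. Consequently, for $k<0$ with $|k|$ strictly larger than this (at most unique) value of $j$, hypothesis (v) of Theorem~\ref{t7B} is satisfied and yields $\mu^{(k)\,\mrm{sym}}\in ID^0$, hence $\mu^{(k)\,\mrm{sym}}\notin ID$.

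The remaining case is $c^j\in\N$ for some $j\in\N$. Let $l$ and $\alpha=c^l$ be as in Theorem~\ref{t6c}(ii) and set $\beta_k := \lceil|k|/l\rceil$, so $\beta_k\to\infty$ as $k\to-\infty$. Applying the defining identity \eqref{6-2},
\[
F_\alpha\bigl(h_{\alpha,\beta_k}(q^{\alpha^{\beta_k}})\bigr) = \alpha^{-\beta_k} F_\alpha(q^{\alpha^{\beta_k}}) \leq \alpha^{-\beta_k} F_\alpha(1) \longrightarrow 0.
\]
Since $F_\alpha$ is continuous and strictly increasing on $[0,1]$ with $F_\alpha(0)=0$, it follows that $h_{\alpha,\beta_k}(q^{\alpha^{\beta_k}})\to 0$. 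As $\eta>0$ is fixed, the condition $h_{\alpha,\beta_k}(q^{\alpha^{\beta_k}})\ge \eta$ from Theorem~\ref{t7B}(vi) is violated for all sufficiently negative $k$, giving $\mu^{(k)\,\mrm{sym}}\notin ID$. Combining the three cases, we may choose $k_0$ to be any sufficiently negative integer, completing the proof.

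There is no real obstacle here: the characterization in Theorem~\ref{t7B} does all the work, and the decay argument for $h_{\alpha,\beta}$ is the same one already used in Theorem~\ref{t4c}. The only bookkeeping point is to isolate the (at most one) \emph{exceptional} $j$ where $2c^j\in\N_{\rm odd}$ that prevents case (v) from applying for small $|k|$.
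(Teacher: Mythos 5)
Your proof is correct, but it takes a different (more self-contained) route than the paper. The paper disposes of $p=r$ via Theorem~\ref{t7A} exactly as you do, but for $p\neq r$ it does not redo any case analysis: it simply invokes Theorem~\ref{t7K}~(ii), which reduces $\mu^{(k)\,\mrm{sym}}\in ID$ to ``$\mu^{(k)}\in ID$ or $\mu^{\prime(k)}\in ID$'', and then applies Theorem~\ref{t4c} to both parameter sets $(c,p,q,r)$ and $(c,r,q,p)$ to conclude that for $k$ sufficiently negative neither factor is infinitely divisible. You instead work directly with the symmetrized criteria of Theorem~\ref{t7B}~(v),(vi) and replay the argument of Theorem~\ref{t4c} with $r/p$ replaced by $\eta=(r/p)\land(p/r)$: the little arithmetic lemma that $2c^j\in\N$ for at most one $j$ when $c^j\notin\N$ for all $j$ (which the paper's proof of Theorem~\ref{t4c} also uses, in the guise of excluding an infinite sequence $j_k$ with $2c^{j_k}\in\N$), and the decay $h_{\alpha,\beta_k}(q^{\alpha^{\beta_k}})\to0$, for which you supply more detail than the paper does. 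Since Theorem~\ref{t7B} is itself derived from Theorem~\ref{t7K}, the two arguments rest on the same underlying facts; the paper's version buys brevity (a pure reduction to the already-proved unsymmetrized result), while yours buys an explicit, quantitative picture of exactly which condition in Theorem~\ref{t7B} fails for large $|k|$. One cosmetic remark: in your case $c^j\in\N$ with $q=0$, the function $h_{\alpha,\beta_k}$ is not defined at $0$, but the criterion in Theorem~\ref{t7B}~(vi) already requires $q>0$, so non-infinite divisibility holds there for every $k<0$; it would be worth one sentence to make that explicit rather than folding it into the limit argument.
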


\begin{proof} For $r=p$, the assertion is obvious  by
Theorem~\ref{t7A}. For $r\neq p$ it follows from Theorems~\ref{t4a},
\ref{t4c} and \ref{t7K}.
\end{proof}

\begin{thm}\label{t7I}
Assume \eqref{7-2}.  Then $\mu^{(k)\,\mrm{sym}}\in ID^0$  for all
$k\in\Z$ if and only if one of the following holds: {\rm(a)}
$(r/p)^2\land (p/r)^2 > q$; {\rm(b)} $(r/p)\land (p/r) > q$ and
$c^m\neq2$ for all $m\in\N$.
\end{thm}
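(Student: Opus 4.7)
The plan is to reduce the statement to Theorem~\ref{t5c} via Theorem~\ref{t7K}~(ii). First, assumption \eqref{7-2} together with Theorem~\ref{t7A}~(ii) yields $\mu^{(k)\,\mrm{sym}}\in ID\cup ID^0$ for every $k\in\Z$, so \lq\lq$\mu^{(k)\,\mrm{sym}}\in ID^0$ for all $k\in\Z$\rq\rq\ is equivalent to \lq\lq$\mu^{(k)\,\mrm{sym}}\notin ID$ for all $k\in\Z$\rq\rq. Theorem~\ref{t7K}~(ii) then rewrites this as \lq\lq$\mu^{(k)}\notin ID$ and $\mu^{\prime(k)}\notin ID$ for every $k\in\Z$\rq\rq, where the prime refers to the parameters $(c',p',q',r')=(c,r,q,p)$.

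Because conditions (a) and (b) of the theorem are invariant under the swap $p\leftrightarrow r$, and $\mu^{(k)\,\mrm{sym}}=\mu^{\prime(k)\,\mrm{sym}}$ by Lemma~\ref{l7a}, it is enough to treat the case $r<p$; the case $r>p$ follows by exchanging primed and unprimed roles. So assume $r<p$. Then $r'>p'$, and Theorem~\ref{t4a}~(iii) (applied to the primed system) gives $\mu^{\prime(k)}\in ID^0$, in particular $\mu^{\prime(k)}\notin ID$, for every $k\in\Z$. The problem thus reduces to characterizing when $\mu^{(k)}\notin ID$ for all $k\in\Z$, equivalently (using Theorem~\ref{t4a}~(ii)) when $\mu^{(k)}\in ID^0$ for all $k\in\Z$. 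By Theorem~\ref{t5c} this holds if and only if $r^2>p^2q$, or $p^2q^2<r^2\leq p^2q$ together with $c^m\neq 2$ for all $m\in\N$.

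It remains to verify that this disjunction coincides with \lq\lq(a) or (b)\rq\rq\ of Theorem~\ref{t7I}. Writing $s:=(r/p)\land(p/r)=r/p\in(0,1)$ in the case $r<p$, we have $r^2>p^2q \Leftrightarrow s^2>q$ and $r>pq \Leftrightarrow s>q$. Since $q<1$ and $s\leq 1$, the inequality $s^2>q$ forces $s>\sqrt{q}>q$, so Theorem~\ref{t5c}~(a) already implies $s>q$; and Theorem~\ref{t5c}~(b) is precisely the statement that $q<s$, $s^2\leq q$ and $c^m\neq 2$ for all $m\in\N$. Consequently the Theorem~\ref{t5c} disjunction is equivalent to \lq\lq$s^2>q$, or $s>q$ together with $c^m\neq 2$ for all $m\in\N$\rq\rq, which is exactly (a) or (b) of Theorem~\ref{t7I}.

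The proof introduces no genuinely new machinery: the substantive classification work is already done in Theorems~\ref{t5c} and~\ref{t7K}. The only delicate point, and the main thing to check carefully, is the $p\leftrightarrow r$ reduction; this is justified by the manifest $p\leftrightarrow r$ symmetry of (a) and (b) combined with Lemma~\ref{l7a}, after which Theorem~\ref{t4a}~(iii) trivialises one of the two factors in the Theorem~\ref{t7K}~(ii) criterion.
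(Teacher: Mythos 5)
Your proof is correct, but it takes a somewhat different route through the paper's results than the paper itself does. The paper deduces Theorem~\ref{t7I} from Theorem~\ref{t7B}~(iii), i.e.\ from the symmetrized criterion for each fixed $k\in\N$, negating it and then passing from ``all $k\in\N$'' to ``all $k\in\Z$'' via the monotonicity Theorem~\ref{t7G}, with Theorem~\ref{t7A}~(ii) identifying non-$ID$ with $ID^0$. You instead invoke Theorem~\ref{t7K}~(ii) directly, eliminate the primed factor once and for all by Theorem~\ref{t4a}~(iii) (after the legitimate $p\leftrightarrow r$ reduction justified by Lemma~\ref{l7a} and the symmetry of conditions (a) and (b)), and then quote the unsymmetrized ``all $k\in\Z$'' result, Theorem~\ref{t5c}, finishing with an elementary translation of $r^2>p^2q$ etc.\ into the $(r/p)\land(p/r)$ form. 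The substance is the same in both cases --- short reductions to the classification theorems of Sections 4--6, and Theorem~\ref{t7B} is itself a consequence of Theorem~\ref{t7K} --- but your version bypasses Theorems~\ref{t7B} and~\ref{t7G} entirely, letting Theorem~\ref{t5c} (whose proof already contains the monotonicity argument via Theorem~\ref{t4b}) carry the quantification over $k$, whereas the paper redoes that quantification on the symmetrized side; what your route buys is that the only symmetrization-specific input is Theorem~\ref{t7K}~(ii). One cosmetic slip: for $q=0$ the chain $s>\sqrt{q}>q$ fails (there $\sqrt{q}=q=0$), but the implication $s^2>q\Rightarrow s>q$ is trivially true in that case, and in fact the equivalence of the two disjunctions you need does not require that implication at all.
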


\begin{proof}
For fixed $k\in\N$, it follows from Theorem \ref{t7B}~(iii) that
$\mu^{(k)\,\mrm{sym}}$ is non-infinitely divisible if and only if
one of the following holds: {\rm(a)} $(r/p)^2\land(p/r)^2> q$;
{\rm(b)} $(r/p)\land (p/r) > q$ and $c^m\neq2$ for all
$m\in\{1,2,\ldots,k\}$. Our assertion is obtained from this.
\end{proof}

Some continuity properties of the symmetrizations of $\mu^{(k)}$ are added.

\begin{thm}\label{t7d}
Let $k\in\Z$ and the parameters $c,p,q,r$ be fixed. Then:

{\rm(i)}  $\mu^{(k)\,\mrm{sym}}$ is absolutely continuous if and only if
$\mu^{(0)\,\mrm{sym}}$ is absolutely continuous.

{\rm(ii)}  $\mu^{(k)\,\mrm{sym}}$ is continuous-singular if and only if
$\mu^{(0)\,\mrm{sym}}$ is continuous-singular.

{\rm(iii)}\quad $\dim\,(\mu^{(k)\,\mrm{sym}})=\dim\,(\mu^{(0)\,\mrm{sym}})$.

{\rm(iv)}\quad $\dim\,(\mu^{(k)\,\mrm{sym}})\leq H(\rh^{(k)\,\mrm{sym}})/\log c
\leq 2H(\rh^{(k)})/\log c$.
\end{thm}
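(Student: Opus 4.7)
My plan is to parallel the proofs of Theorems~\ref{t3a} and~\ref{t3b}, using a mixture relation for symmetrizations that plays the role of \eqref{p2c5}. First I will derive such a relation by squaring the modulus in \eqref{p2c4}: setting $A=(p^2+r^2)/(p+r)^2$ and $B=pr/(p+r)^2$, so that $A+2B=1$, this gives
\[
\wh\mu^{(k)\sym}(z)=\wh\mu^{(k+1)\sym}(z)\bigl(A+2B\cos(c^{-k}z)\bigr),
\]
which translates back to
\[
\mu^{(k)\sym}(E)=A\,\mu^{(k+1)\sym}(E)+B\,\mu^{(k+1)\sym}(E-c^{-k})+B\,\mu^{(k+1)\sym}(E+c^{-k})
\]
for every $E\in\mcal B(\R)$.

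For (i)--(iii) it suffices to show that the stated properties do not depend on $k$, so I will only compare $\mu^{(k)\sym}$ with $\mu^{(k+1)\sym}$. By \eqref{7-1}, $\mu^{(k)\sym}$ is $c^{-1}$-decomposable, hence by Grincevi\v{c}jus--Wolfe is either a Dirac measure or continuous; the Dirac case occurs only when $r=1$, where (i)--(iii) are trivial. If $p=0$ or $r=0$ (and $r<1$), then $\mu^{(k)}$ either does not depend on $k$ or is a translation of $\mu^{(k+1)}$ by $c^{-k}$, so $\mu^{(k)\sym}$ does not depend on $k$ and again there is nothing to prove. In the remaining case $p,r>0$ both $A$ and $B$ are strictly positive, and the argument of Theorem~\ref{t3a} goes through with the natural modification that $E-c^{-k}$ is replaced by the pair $E\pm c^{-k}$ throughout: absolute continuity is preserved in both directions because translates of Lebesgue-null sets are Lebesgue-null; continuous-singularity is then immediate since Diracness is excluded; for the dimension, the set $E\cup(E+c^{-k})\cup(E-c^{-k})$ has the same Hausdorff dimension as $E$ and carries full $\mu^{(k)\sym}$ mass whenever $E$ has full $\mu^{(k+1)\sym}$ mass, while conversely any $E$ with $\mu^{(k)\sym}(E)=1$ must satisfy $\mu^{(k+1)\sym}(E)=1$, since otherwise the mixture identity combined with $A+2B=1$ would force $\mu^{(k)\sym}(E)<1$.

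For (iv), the first inequality $\dim(\mu^{(k)\sym})\leq H(\rh^{(k)\sym})/\log c$ follows by applying Theorem~2.2 of Watanabe~\cite{Wa00} to the $c^{-1}$-decomposability relation \eqref{7-1}, exactly as in the proof of Theorem~\ref{t3b}. For the second, I will observe that $\wh\rh^{(k)\sym}(z)=\wh\rh^{(k)}(z)\,\overline{\wh\rh^{(k)}(z)}$ identifies $\rh^{(k)\sym}$ with the convolution $\rh^{(k)}*\wt\rh^{(k)}$, where $\wt\rh^{(k)}(B):=\rh^{(k)}(-B)$ is the reflection of $\rh^{(k)}$ and has the same entropy as $\rh^{(k)}$. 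Subadditivity of entropy under independent sums, a direct consequence of $H(X+Y)\leq H(X,Y)=H(X)+H(Y)$ for independent discrete $X,Y$, then yields $H(\rh^{(k)\sym})\leq 2H(\rh^{(k)})$. No step presents a genuine obstacle: the only real bookkeeping lies in isolating the degenerate subcases ($p=0$, $r=0$, or $r=1$), and the substance of the argument tracks Theorems~\ref{t3a} and~\ref{t3b} line by line.
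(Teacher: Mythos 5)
Your proof is correct and follows essentially the same route as the paper: you derive the same three-point convolution identity $\mu^{(k)\sym}=\mu^{(k+1)\sym}*[(p_0^2+r_0^2)\dl_0+p_0r_0(\dl_{c^{-k}}+\dl_{-c^{-k}})]$ from \eqref{p2c4}, adapt the argument of Theorem~\ref{t3a} for (i)--(iii) using the $c^{-1}$-decomposability \eqref{7-1}, and obtain (iv) from Watanabe's theorem together with entropy subadditivity (which the paper cites as E29.23 of \cite{Sa} and you prove directly via $H(X+Y)\leq H(X)+H(Y)$).
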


\begin{proof}
It follows from \eqref{p2c4} that
\[
\wh\mu^{(k)\,\mrm{sym}}(z)=\wh\mu^{(k+1)\,\mrm{sym}}(z)\,|p_0+r_0e^{ic^{-k}z}|^2,
\]
where $p_0=p/(p+r)$ and $r_0=r/(p+r)$. Since
\[
|p_0+r_0e^{ic^{-k}z}|^2=(p_0+r_0e^{ic^{-k}z})\,(p_0+r_0e^{-ic^{-k}z})
=p_0^2+r_0^2+p_0r_0(e^{ic^{-k}z}+e^{-ic^{-k}z}),
\]
we have
\[
\mu^{(k)\,\mrm{sym}}=\mu^{(k+1)\,\mrm{sym}} *[(p_0^2+r_0^2)\dl_0+
p_0r_0 (\dl_{c^{-k}}+\dl_{-c^{-k}})],
\]
that is,
\[
\mu^{(k)\,\mrm{sym}}(B)=(p_0^2+r_0^2)\mu^{(k+1)\,\mrm{sym}}(B) +
p_0r_0 [ \mu^{(k+1)\,\mrm{sym}}(B-c^{-k})+ \mu^{(k+1)\,\mrm{sym}}(B+c^{-k})]
\]
for $B\in\mcal B(\R)$. Hence an argument similar to the proof of
Theorem \ref{t3a} works to show (i)--(iii), since
$\mu^{(k)\,\mrm{sym}}$ is $c^{-1}$-decomposable by (\ref{7-1}) and
hence either absolutely continuous, continuous singular, or a Dirac
measure. Assertion (iv) follows from Watanabe's theorem \cite{Wa00}
and E29.23 of \cite{Sa}.
\end{proof}

The statement of Theorem \ref{t3c} is true for
$\mu^{(k)\,\mrm{sym}}$ in place of $\mu^{(k)}$, except that $\log3$
should be replaced by $2\log3$.

\bigskip

\noindent
Alexander Lindner\\
Institut f\"ur Mathematische Stochastik,
Technische Universit\"at Braunschweig, Pockels\-stra{\ss}e 14, D-38106 Braunschweig, Germany\\
email: a.lindner@tu-bs.de
\medskip

\noindent
Ken-iti Sato\\
Hachiman-yama 1101-5-103, Tenpaku-ku, Nagoya, 468-0074 Japan\\
email: ken-iti.sato@nifty.ne.jp
\end{document}